\documentclass[11pt,a4paper, twoside]{article}
\usepackage[utf8]{inputenc}
\usepackage[T1]{fontenc}
\usepackage{ucs}
\usepackage{amsmath}
\usepackage{amsfonts}
\usepackage{amssymb}
\usepackage{amsthm}
\usepackage[all]{xy}
\usepackage{enumerate}
\usepackage{enumitem}
\usepackage{tikz}
\usepackage{tikz-cd}
\usepackage{pgfplots}
\usetikzlibrary{patterns}
\usetikzlibrary{matrix,arrows}
\usepackage{ifthen}
\usepackage{everypage}
\usepackage{verbatim} 
\usepackage{mathtools}
\usepackage{url}
\usepackage{mathrsfs}
\usepackage{indentfirst}
\usepackage{todonotes}
\usepackage{hhline}
\usepackage[top=3cm, bottom=3cm, left=3cm, right=3cm]{geometry}
\usepackage{thmtools, thm-restate}
\usepackage{amssymb}
\usepackage{fancyhdr}
\usepackage[title]{appendix}

\usepackage[backref=page, colorlinks=true]{hyperref}

\DeclareMathOperator{\im}{Im}

\DeclareMathOperator{\gal}{Gal}
\DeclareMathOperator{\pic}{Pic}

\DeclareMathOperator{\et}{\acute{e}t}

\DeclareMathOperator{\mor}{Hom}

\DeclareMathOperator{\spec}{Spec}
\DeclareMathOperator{\Res}{Res}
\DeclareMathOperator{\res}{res}
\DeclareMathOperator{\cores}{cores}

\DeclareMathOperator{\coh}{H}

\DeclareMathOperator{\divcart}{div}

\DeclareMathOperator{\br}{Br}
\DeclareMathOperator{\ns}{NS}

\DeclareMathOperator{\cont}{cont}

\DeclareMathOperator{\sep}{s}
\DeclareMathOperator{\cd}{cd}
\DeclareMathOperator{\inv}{inv}

\theoremstyle{definition}

\newtheorem{thm}{Theorem}[section]
\newtheorem{prop}[thm]{Proposition}
\newtheorem{defi}[thm]{Definition}
\newtheorem{lem}[thm]{Lemma}
\newtheorem{cor}[thm]{Corollary}

\newtheorem{rem}[thm]{Remark}
\newtheorem{remi}[thm]{Reminder}

\newtheorem{quest}[thm]{Question}

\newtheorem{conj}[thm]{Conjecture}

\newcommand\restr[2]{{% we make the whole thing an ordinary symbol
  \left.\kern-\nulldelimiterspace % automatically resize the bar with \right
  #1 % the function
  \vphantom{\big|} % pretend it's a little taller at normal size
  \right|_{#2} % this is the delimiter
  }}

%\setcounter{section}{0}

%\renewcommand\refname{Références}
%\renewcommand{\abstractname}{Résumé}
%\renewcommand{\contentsname}{Table des Matières}

%\newcounter{para}
%\newcommand\mypara{\par\refstepcounter{para}\thepara\space}

\title{Unirationality and $R$-equivalence for conic bundles over quasi-finite fields}
\author{Elyes Boughattas}
\date{}

\makeatletter
\def\blfootnote{\gdef\@thefnmark{}\@footnotetext}
\makeatother

\AtEndDocument{\bigskip{\footnotesize%
  \textsc{Department of Mathematical Sciences, University of Bath - Claverton Down, Bath, BA2~7AY, United Kingdom} \par  
  \textit{Email address}: \texttt{eb2751@bath.ac.uk}
  }}

\begin{document}

\pagestyle{fancy}
\fancyhead{}
\fancyfoot{}
\fancyhead[CO]{\sc{Conic bundles over quasi-finite fields}}
\fancyhead[CE]{\sc{Elyes Boughattas}}
\fancyhead[RE,LO]{\thepage}
\renewcommand{\headrulewidth}{0pt}

\blfootnote{\textit{Date}: \today}
\maketitle

\begin{abstract}
Yanchevskiĭ had asked whether conic bundle surfaces over $\mathbf{P}^1_k$ are unirational when~$k$ is a finite field. We give a partial answer to his question by showing that for quasi-finite fields $k$ (e.g.\ finite fields) a regular conic bundle $X$ over~$\mathbf{P}^1_k$ is unirational if all non-split fibres lie over rational points. For large finite fields $k$, this beats a previous result of Mestre. Under the same assumption, we also prove that all rational points of $X$ are $R$-equivalent.
\end{abstract}

\section{Introduction}

Let $k$ be a field.
% A \textit{$k$-variety} is a separated $k$-scheme of finite type.
A $k$-variety is said to be \textit{rational} (resp. \textit{unirational})
%, resp. \textit{separably $k$-unirational})
 if there exist $n\in\mathbf{Z}_{\geq0}$ and a rational map $\mathbf{P}^n_k\dashrightarrow X$ that is birational (resp. dominant).
%resp. dominant and separable).
In the case of smooth geometrically rational projective surfaces, the classification of their minimal models by seminal works of Manin \cite{MR225780}\cite{MR225781} and Iskovskikh~\cite{MR525940} splits the study of unirationality into two families.
%For low dimensional varieties, unirationality and rationality were longly thought to be closely related. On curves, Lüroth~\cite{Luroth1875} showed that unirationality and rationality are equivalent over any field. Castelnuovo then proved that these notions actually coincide for surfaces over an algebraically closed field of characteristic zero. Later on, Zariski \cite{MR99990} provided the first example in any positive characteristic of a unirational surface that is not rational, by proving that it has nonnegative Kodaira dimension. Since then, the study of unirationality of surfaces in positive characteristic has taken an independent direction, focusing at first on the wide family of geometrically rational surfaces.
%
%Using the classification of the minimal models of the latter by seminal works of Manin \cite{MR225780}\cite{MR225781} and Iskovskikh \cite{MR525940}, this study splits into two families.
The first family consists in del Pezzo surfaces $X$, that is, smooth projective surfaces with ample anticanonical bundle $K_X$. To $X$ is associated its degree $d\coloneqq(K_X)^2\in\{1,\dots,9\}$ and works of Segre, Manin and Kollár showed that~$X$ is unirational whenever $X(k)\neq\emptyset$ and $d\geq3$, over any field $k$. When $d=2$, partial results are given by Salgado, Testa and Várilly-Alvarado~\cite{MR3245139} and the case of finite fields has been fully tackled by Festi and van Luijk \cite{MR3455757}. The second family consists in conic bundles over a smooth curve. To define them, we recall that a \textit{conic} over $k$ is the vanishing locus of a quadric in~$\mathbf{P}^2_k$.
\begin{defi}\label{CONIC}
For any scheme $S$, a \textit{conic bundle over $S$} is a proper and flat morphism $f:X\rightarrow S$ where $X$ is integral, with a smooth generic fibre and such that all geometric fibres of $f$ are isomorphic to a conic. The conic bundle~$f$ is called \textit{regular} if $X$ is regular.
\end{defi}
By Lüroth's theorem, if $X\rightarrow C$ is a conic bundle over a smooth, integral, projective curve $C$, a necessary condition for the unirationality of~$X$ is that~$C\simeq\mathbf{P}^1_k$, which lets us, from now on, restrict our attention to that case. The following question was raised by Iskovskikh in~\cite[\S4.5]{MR220734} (see also \cite[Problem]{MR1211025}).
\begin{quest}\label{problem1}
If $k$ is a field and $X\rightarrow\mathbf{P}^1_k$ is a conic bundle such that $X(k)\neq\emptyset$, is $X$ unirational?
\end{quest}
Although this question is still open, it is expected to have a negative answer in general. Using the terminology of \cite[Definition 0.1]{MR1408492}, we say that a conic is \textit{non-split} if it is singular and irreducible. For a conic bundle $f:X\rightarrow\mathbf{P}^1_k$, we denote by $\delta\in\mathbf{Z}_{\geq0}$ the degree of $\{t\in\mathbf{P}^1_k:X_t\text{ is non-split}\}$, which may also be understood as the number of geometric singular fibres of a minimal model of the generic fibre of $f$. When $k$ has characteristic different from $2$, conic bundles with a $k$-point are $k$-unirational if $\delta\leq7$, by works of Segre, Iskovskikh, Manin, Kollár and Mella (see section~$1$ of \cite{MR3689320} for a summary). Iskovskikh~\cite{MR220734} gave a positive answer to Question \ref{problem1} for $k=\mathbf{R}$, or more generally when~$k$ is a real closed field. Later, Yanchevskiĭ \cite{MR826395} proved the case of Henselian fields, Voronovich that of pseudo-algebraically closed fields~\cite{MR867120} and Yanchevskiĭ generalised this result to pseudo-closed fields, see \cite[Theorem 1]{MR1211025}. Over finite fields, Yanchevskiĭ raised the following question (see the discussion preceding Problem 3 in \cite{MR1171295}).

\begin{quest}[Yanchevskiĭ]\label{yanchoriginal}
Let $\mathbf{F}$ be a finite field. For all conic bundles $f:X\rightarrow\mathbf{P}^1_{\mathbf{F}}$, is~$X$ unirational?
\end{quest}

The only result in this direction was given by Mestre in \cite{MR1381777} who proved that~$X$ is
%separably
unirational when $\mathbf{F}$ is finite of characteristic different from $2$ and $|\mathbf{F}|\geq \delta^2\times2^{\delta-3}$. Our first result is a partial answer to Question \ref{yanchoriginal} over the larger family of \textit{$2$-quasi-finite} fields that we first define, jointly with the classical notion of quasi-finite fields. We recall that for a prime number $p$, the ring of $p$-adic integers is denoted by $\mathbf{Z}_p$.

\begin{defi}\label{defi2QF}
Let $k$ be a field. We say that $k$ is \textit{$2$-quasi-finite} if it is perfect and if there exists a set of prime numbers $S$ such that $2\in S$ and the absolute Galois group of $k$ is isomorphic to $\prod_{p\in S}\mathbf{Z}_p$. Furthermore, $k$ is called \textit{quasi-finite} if $S$ is the set of all prime numbers.
\end{defi}
By definition, quasi-finite fields are $2$-quasi-finite
%, but the converse is not true (see e.g.\ the field constructed by Ax in \cite[\S1, pp.1215-1216]{MR188263}).
Three classical examples of quasi-finite fields are that of finite fields, Laurent series over an algebraically closed field of characteristic zero and non-principal ultraproducts of finite fields (see e.g.\ \cite[\S7, Proposition~3]{MR229613}).
%We refer the reader to~\S\ref{subsec2QF} for a neater picture of these properties of fields.
%Furthermore, $C_1$ fields differ from quasi-finite fields. Indeed, on the one hand Tsen's theorem ensures that~$\mathbf{C}(t)$ is $C_1$, but its absolute Galois group is the free profinite group on the set $\mathbf{C}$ \cite[]{} so that it is not quasi-finite; on the other hand, Ax constructed a quasi-finite field that is not~$C_1$ in \cite{MR199177}. Besides, there are~$2$-quasi-finite fields that are $C_1$ but not quasi-finite, as we show in Proposition \ref{}. This might be summarised in the following Venn diagram:
%\begin{center}
%\begin{tikzpicture}
%g
%\end{tikzpicture}
%\end{center}
Our first result may then be stated as follows:
%
%Quasi-finite fields have cohomological dimension at most $1$. Three classical examples are that of finite fields, Laurent series over an algebraically closed field of characteristic zero and non-principal ultraproducts of finite fields. It is worthwile mentioning that $C_1$ fields (see \cite[II.\S3.2]{MR1324577}) differ from quasi-finite fields: on the one hand, Tsen's theorem ensures that $\mathbf{C}(t)$ is $C_1$, but its absolute Galois group is the free profinite group on the set $\mathbf{C}$ so that it is not quasi-finite; on the other hand, Ax constructed a quasi-finite field that is not~$C_1$ in \cite{MR199177}. Our first result may then be stated as follows:

\begin{thm}\label{unirationalconicbundles}
Let $k$ be a $2$-quasi-finite field of characteristic different from~$2$ and $f:X\rightarrow\mathbf{P}^1_k$ a regular conic bundle. Denote by~$B$ the reduced divisor of $\mathbf{P}^1_k$ made of those points whose fibre by $f$ is non-split. Consider the following assertion:
\begin{enumerate}[label=($\star$), ref =$\star$]
\item\label{unirationalcondition} \textit{The set $B$ is a union of rational points, one point of degree at most~$2$ and one point whose degree is odd.}
\end{enumerate}
If condition (\ref{unirationalcondition}) is verified, then $X$ is 
%separably
unirational.
\end{thm}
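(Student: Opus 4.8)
The plan is to reduce unirationality to the vanishing of a Brauer class after a base change of the base $\mathbf{P}^1_k$, and then to realise that base change as an explicit tower of double covers tailored to the arithmetic of $2$-quasi-finite fields. Fix a coordinate $t$ on $\mathbf{P}^1_k$ and let $\alpha\in\br(k(t))[2]$ be the class of the generic fibre of $f$, a quaternion class whose ramification locus is exactly $B$ and whose residue $\partial_P(\alpha)$ at each $P\in B$ is the non-trivial element of $H^1(k(P),\mathbf{Z}/2)=k(P)^*/k(P)^{*2}$. The reduction I would use is the following: if $g\colon\mathbf{P}^1_k\to\mathbf{P}^1_k$ is a finite $k$-morphism whose source is a genuine $\mathbf{P}^1_k$ and with $g^*\alpha=0$, then the pulled-back conic bundle $Y:=X\times_{\mathbf{P}^1_k}\mathbf{P}^1_k$ admits a section, hence is $k$-rational; since $Y\to X$ is dominant, $X$ is then unirational. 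The whole problem thus becomes the construction of one such $g$.

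Next I would record the arithmetic input. A $2$-quasi-finite field and each of its finite extensions $k'$ has a unique quadratic extension, so $\lvert k'^*/k'^{*2}\rvert=2$, and $\cd_2(k')\le1$ forces $\br(k')[2]=0$; in particular every conic over such a field is rational, which will ensure that each double cover built below indeed has source $\cong\mathbf{P}^1_k$ (its smooth conic source automatically carries a rational point, regardless of leading coefficients). For a finite $g$ one has $\partial_Q(g^*\alpha)=e_Q\cdot\res_{k(Q)/k(P)}\partial_P(\alpha)$ at $Q$ above $P$; because the residue is the non-square class, this vanishes precisely when $e_Qf_Q$ is even, where $e_Q,f_Q$ are the ramification index and residue degree. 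Since $\br(k)[2]=0$, an everywhere unramified pullback is automatically trivial, so the condition $g^*\alpha=0$ reduces to the purely combinatorial requirement that $g$ be \emph{even over $B$}, i.e.\ $e_Qf_Q$ is even for every $Q$ lying over a point of $B$. Faddeev reciprocity, together with the fact that $\cores$ of the non-square class is non-trivial at each $P\in B$, shows moreover that $\lvert B\rvert$ is even.

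The heart of the argument is then the construction of $g$, as a composite of double covers, in two phases. In the clearing phase I dispose of the two exceptional points of $(\ref{unirationalcondition})$ at once: I take a double cover branched at the degree-$2$ point $P_0$ when it is present (so $e=2$ there kills it), or else branched at a rational point of $B$ together with an auxiliary point; in either case there remains a single twist $\varepsilon\in\{1,d\}$, with $d$ a fixed non-square, and I choose it so that the odd-degree point $P_1$ becomes inert. This is possible, and in fact forced, because the two candidate values differ by $d$, which is a non-square in $k(P_1)$ as $[k(P_1):k]$ is odd; an inert $P_1$ acquires residue degree $2$ upstairs and is killed. It is exactly here that parity of degrees is used: at the even-degree point $P_0$ the element $d$ is a square, so twisting cannot alter its splitting type, which is why $P_0$ must be a branch point rather than rendered inert. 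After this single cover the ramification locus consists only of rational points (split rational points produce pairs of rational points, inert ones disappear). In the induction phase, whenever the remaining rational ramification locus has $2m>0$ points I branch a double cover at two of them and choose $\varepsilon$ so that at most half of the remaining rational points split; since the twist flips splitting globally, this is always achievable, and the count drops from $2m$ to at most $2(m-1)$. Iterating empties $B$, at which stage the composite $g$ satisfies $g^*\alpha=0$.

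The main obstacle is this construction, and specifically the clearing phase. An odd-degree closed point can never be a branch point of a genus-$0$ double cover of $\mathbf{P}^1_k$ (the branch divisor has even degree), so $P_1$ cannot be eliminated by ramification and must instead be made inert; arranging this to be compatible with simultaneously killing $P_0$ and keeping every intermediate source isomorphic to $\mathbf{P}^1_k$ is the delicate point, and it is precisely the configuration ``one even-degree point, one odd-degree point, the rest rational'' of $(\ref{unirationalcondition})$, combined with the unique quadratic extension of $k$, that makes the single available twist suffice. Verifying that the source of each twisted cover is rational (via $\br(k)[2]=0$) and that the all-rational induction terminates are then the remaining technical checks.
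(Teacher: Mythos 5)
Your proposal is correct and follows the same overall route as the paper: Enriques' criterion together with the residue formula reduces unirationality to producing a cover $\varphi:\mathbf{P}^1_k\rightarrow\mathbf{P}^1_k$ with $2\mid e(t/s)\times[\kappa(t):\kappa(s)]$ over every point of $B$ (this is precisely Theorem \ref{uniratcrit}), and the cover is then assembled as a tower of quadratic twists of double covers, exploiting the unique quadratic extension of $k$ and of each of its finite extensions; your induction on the rational points of $B$ is Lemma \ref{killingrationalresidues} verbatim, and your twisting step is Lemma \ref{twistingdegreetwo}. The one genuine divergence is the treatment of the degree-$2$ point $P_0$: the paper renders it inert by first constructing, via a Galois-descent argument (Lemma \ref{reducingdegree} and Corollary \ref{correducingdegree}), a degree-$2$ cover under which $P_0$ pulls back to a single point of degree $4$, whereas you take the double cover $y^2=\varepsilon\, p_0(x)$ branched along $P_0$ itself and kill the residue by ramification ($e=2$) rather than by residue degree. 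Your device is more elementary --- it dispenses with Lemma \ref{reducingdegree} altogether --- and it is compatible with the twist needed at the odd-degree point $P_1$, since changing $\varepsilon$ leaves the ramified fibre over $P_0$ unaffected while it flips the splitting over $P_1$ because a non-square of $k$ remains a non-square in an odd-degree extension. The only slip is your parenthetical claim that an odd-degree point can never lie in the branch locus of a genus-zero double cover: a rational point can, but in that case it is simply absorbed into the rational-point induction, so nothing is lost.
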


The case where $B=\mathbf{P}^1(k)$ is not encapsulated in the aforementioned result of Mestre, whose bound is very restrictive for large $|k|$. The proof relies on a criterion of Enriques for unirationality of conic bundles which consists in building a rational curve on $X$ intersecting a general fibre of~$f$ (see Proposition~\ref{enriques}). For fields $k$ with $\cd(k)\leq1$, we prove that this amounts to constructing a finite morphism $\varphi:\mathbf{P}^1_k\rightarrow\mathbf{P}^1_k$ such that for each $t\in\varphi^{-1}(B)$, we have~$2\mid e(t/\varphi(t))\times[\kappa(t):\kappa(\varphi(t))]$, where $e(t/\varphi(t))$ denotes the ramification index of~$\varphi$ at~$t$ (see Theorem \ref{uniratcrit}). More precisely, we show that the unirationality of~$X$ is equivalent to the existence of such a cover. When~$k$ is $2$-quasi-finite, the latter is achieved under assumption (\ref{unirationalcondition}) by taking~$\varphi$ as a tower of well chosen degree $2$ covers (see Section~\ref{proofunirationalconicbundles}). Eventually, this gives a further insight into the limits of the method used by Mestre, whose construction of $\varphi$ is made of precisely \textit{one} degree $2$ cover of~$\mathbf{P}^1_k$, and a degree $2$ cover satisfying the above property does not exist if~$B$ contains $\mathbf{P}^1(k)$.\\

%Commentaires sur la R-équivalence ??

When studying rational points on varieties, another topic of interest is that of \textit{$R$-equivalence} introduced by Manin in \cite[Chapter II, \S14]{MR833513} and defined as follows.

\begin{defi}
Let $k$ be a field and $X$ a $k$-scheme. Two points $x$ and $y$ in~$X(k)$ are called \textit{directly $R$-equivalent} if there exists a rational map $g:\mathbf{P}^1_k\dashrightarrow X$ such that $x$ and~$y$ both belong to $g(\mathbf{P}^1(k))$. The equivalence relation spanned by this relation is called \textit{$R$-equivalence}. We further say that $R$-equivalence is \textit{trivial} if~$X(k)/R$ has cardinality $1$.
\end{defi}

The $R$-equivalence relation on $X(k)$ turns out to be interesting when, geometrically,~$X$ contains many rational curves. This is the case in the setting of geometrically rational surfaces and more generally of separably rationally connected varieties \cite[Chapter~IV, Definition 3.2]{MR1440180} which are defined as follows.

\begin{defi}
A variety $X$ over a field $k$ is \textit{separably rationally connected} if for any algebraic closure $\overline{k}$ of $k$, there exists an integral $\overline{k}$-variety $T$ and a rational map $e:\mathbf{P}^1_{\overline{k}}\times_{\overline{k}}T\dashrightarrow X_{\overline{k}}$ such that the map $\mathbf{P}^1_{\overline{k}}\times\mathbf{P}^1_{\overline{k}}\times T\dashrightarrow X_{\overline{k}}\times X_{\overline{k}}$ defined by $(z,z',t)\mapsto (e(z,t),e(z',t))$ is dominant and smooth at the generic point.
\end{defi}

Various authors have studied triviality and finiteness of~$X(k)/R$ for several classes of separably rationally connected varieties. For instance, it is already known that regular conic bundles over $\mathbf{P}^1_k$ with at most~$5$ reducible geometric fibres have only one $R$-equivalence class if $k$ is an infinite, perfect, $C_1$ field of characteristic different from~$2$ \cite{MR899411}\cite{MR3434268}. We refer the reader to \cite{MR1715330}, \cite{MR2957693} and \cite{MR3434268} for further results on different families of varieties. Our second result in this paper is on the triviality of $R$-equivalence for conic bundles $X\rightarrow\mathbf{P}^1_k$ over a $2$-quasi-finite field $k$.

\begin{thm}\label{Requivalenceconicbundles}
Let us use the same notations as in Theorem \ref{unirationalconicbundles} and consider the following assertion:
\begin{enumerate}[label=($\star\star$), ref =$\star\star$]
\item\label{Requivalencecondition} \textit{The set $B$ is a union of rational points and one point which has either degree~$2$ or odd degree.}
\end{enumerate}
If condition (\ref{Requivalencecondition}) is verified, then $X(k)/R$ is trivial.
\end{thm}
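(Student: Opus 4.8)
The plan is to fix a base point $o\in\mathbf{P}^1(k)$ and to prove that every rational point of $X$ is $R$-equivalent to a rational point of the fibre $X_o$. This suffices for triviality: since $\cd(k)\leq1$, any smooth conic over $k$ with a rational point is isomorphic to $\mathbf{P}^1_k$, and inspecting the three possible fibre types (a smooth split conic, a node, or two lines meeting at a rational point) shows that in each case the rational points of $X_o$ form a connected union of rational curves, hence a single $R$-equivalence class. Moreover $\cd(k)\leq1$ makes every fibre over a rational point carry a rational point (a smooth fibre is split, a non-split one contributes its node), so $X(k)\neq\emptyset$ and $o$ may be chosen freely; condition (\ref{Requivalencecondition}) implies (\ref{unirationalcondition}), so $X$ is in particular unirational by Theorem \ref{unirationalconicbundles}.

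For an arbitrary $x\in X(k)$ lying over $a\coloneqq f(x)$, the heart of the argument is to reuse the cover-theoretic criterion of Theorem \ref{uniratcrit}. I would construct a finite morphism $\varphi\colon\mathbf{P}^1_k\rightarrow\mathbf{P}^1_k$ that satisfies the divisibility condition $2\mid e(t/\varphi(t))\times[\kappa(t):\kappa(\varphi(t))]$ for every $t\in\varphi^{-1}(B)$ and that, in addition, admits rational preimages $p$ of $a$ and $q$ of $o$. By the proof of Theorem \ref{uniratcrit}, the divisibility condition makes the generic fibre of the pulled-back conic bundle split, so that a smooth projective model $Y$ of $X\times_{\mathbf{P}^1_k,\varphi}\mathbf{P}^1_k$ is $k$-rational. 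Since $R$-equivalence is a birational invariant of smooth proper varieties, $Y(k)/R$ is trivial. I would then lift $x$ to a rational point $\tilde{x}\in Y(k)$ in the fibre $Y_p$, pick any rational point $\tilde{z}\in Y(k)$ in the fibre $Y_q$ (which exists because $\cd(k)\leq1$), and use $\tilde{x}\sim_R\tilde{z}$ to conclude, after pushing the connecting chain forward along the projection $Y\rightarrow X$, that $x$ is $R$-equivalent to the image of $\tilde{z}$, a rational point of $X_o$.

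The lifting of $x$ deserves attention. When $a\notin B$, the fibre $X_a$ is smooth or a pair of lines, $x$ lies in the smooth locus, and it lifts at once to the isomorphic fibre $Y_p$. When $a\in B$, the only rational point of the non-split fibre $X_a$ is its node; here I would require $p$ to be a ramification point of $\varphi$ of even index, which simultaneously realises the divisibility condition at $p$ and splits $X_a$ after base change, so that the node acquires a genuine rational preimage in the now-smooth fibre $Y_p$. Thus the same device---routing a rational, evenly ramified preimage through $a$---handles both the divisibility requirement and the lifting of a node.

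The principal obstacle is the construction of $\varphi$: one must meet the divisibility condition over the whole of $\varphi^{-1}(B)$ while forcing a rational preimage of the prescribed---and arbitrary---point $a$ (and of the fixed point $o$). Following the proof of Theorem \ref{unirationalconicbundles}, I would build $\varphi$ as a tower of degree $2$ covers, choosing the successive branch loci so that each rational point of $B$, as well as $a$ and $o$, lies beneath a rational point of even ramification, while the single non-rational point allowed by (\ref{Requivalencecondition}) is absorbed by the last cover of the tower. The restriction in (\ref{Requivalencecondition}) to one special point, rather than the two permitted by (\ref{unirationalcondition}), is exactly what leaves enough freedom to also prescribe the rational preimage of $a$; with two special points the ramification constraints on the tower would conflict with this extra demand. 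Checking that such a tower exists over every $2$-quasi-finite field is the technical crux and reduces to an explicit, if delicate, construction of nested quadratic covers with prescribed ramification.
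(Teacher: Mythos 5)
Your overall strategy is the right one and matches the paper's reduction: the paper proves a general criterion (Corollary \ref{Requicrit}) saying that $X(k)/R$ is trivial as soon as, for every pair of rational points $s_0,s_1$ of $\mathbf{P}^1_k$, there is a cover $\varphi$ satisfying the divisibility condition over $B$ \emph{and} having rational points in $\varphi^{-1}(s_0)$ and $\varphi^{-1}(s_1)$. But the entire substance of the paper's proof of Theorem \ref{Requivalenceconicbundles} is the explicit construction of such a $\varphi$ under hypothesis (\ref{Requivalencecondition}), and this is exactly the step you defer as ``the technical crux'' without carrying it out. The paper does it in \S\ref{proofRequivalenceconicbundles} by combining Corollary \ref{correducingdegree} (turning the degree-$2$ point of $B$ into a degree-$4$ point under a quadratic cover), Lemma \ref{twistingdegreetwo} (twisting a quadratic cover to control whether fibres over odd-degree points are inert or totally split), and Lemma \ref{killingrationalresidues} (an induction building a tower of quadratic covers that kills the rational points of $B$ while remaining \emph{totally ramified} above two prescribed rational points --- total ramification being what guarantees the rational preimages needed for condition (b)). None of this is routine: for instance, handling the odd-degree point requires arranging, via the twist of Lemma \ref{twistingdegreetwo}, that its preimage be inert, and the induction must propagate total ramification above the two marked points through every layer of the tower. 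As written, your argument assumes the conclusion of the hardest part.

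A secondary remark on the part you do argue: once $\varphi$ exists, you connect points by passing to a smooth proper model $Y$ of the fibre product, invoking its $k$-rationality and the birational invariance of $R$-equivalence for smooth proper varieties. This can be made to work, but it is heavier than needed and creates the lifting difficulty you then have to patch at non-split fibres (where the only rational point of $X_a$ is the node, and whether it lifts to $Y_p$ depends on the local structure of the resolution). The paper avoids all of this: the divisibility condition gives an actual section $h:\mathbf{P}^1_k\rightarrow X'$ of the pulled-back bundle (Remark \ref{coverkillresidues}), the curve $g\circ h(\mathbf{P}^1_k)$ directly $R$-connects the two points $g(h(t_0))$ and $g(h(t_1))$, and each of these is connected to the given $x_i$ \emph{inside the fibre conic} $X_{s_i}$ via Reminder \ref{RequivCONIC}(i) --- which is trivially true at a node since a non-split conic has only one rational point. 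If you adopt that routing, your node-lifting device (forcing $p$ to be an even ramification point of $\varphi$ over $a$) becomes unnecessary, and with it the extra constraint it would impose on the already delicate construction of the tower.
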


Eventually, in Corollary~\ref{fromconj2toconj1}, we prove that for a finite field $\mathbf{F}$ and a conic bundle $X\rightarrow\mathbf{P}^1_{\mathbf{F}}$, the variety $X$ is unirational and $R$-equivalence is trivial on it, if one assumes that rational points of a smooth, geometrically integral and separably rationally connected projective surface over $\mathbf{F}(t)$ are dense in its Brauer-Manin set (see Conjecture \ref{BMcarp}). This is an analogue of a conjecture of Colliot-Thélène and Sansuc over number fields \cite{MR605344} that has been studied by various authors. Combined to Theorems~\ref{unirationalconicbundles} and~\ref{Requivalenceconicbundles}, this leads to the following generalisation of Question \ref{yanchoriginal} that we formulate over quasi-finite fields.

\begin{quest}\label{conjyanch}
Let $k$ be a quasi-finite field and $X\rightarrow\mathbf{P}^1_k$ a conic bundle. Is $X$ unirational and $R$-equivalence trivial on $X$?
\end{quest}

\subsection*{Outline of the paper}

In Section \ref{secPreliminaries}, we start by setting notations and definitions in \S\ref{subsecNOT}. In \S\ref{subsec2QF}, we state properties of 2-quasi-finite fields that are used all along the article. The content of \S\ref{subsecCB} is known to the experts. We explain how one can detect split fibres of a regular conic bundle through the residues of its generic fibre. Eventually, in \S\ref{rappelapproximation} we recall the definitions of weak approximation and the Brauer-Manin set over the function field of a curve over a finite field. We also recall the dictionary between rational points of a variety and sections of its models.
%In \S\ref{subsecNS2QF}, we deduce from \S\ref{subsecCB} that, over a $2$-quasi-finite field, regular conic bundles over $\mathbf{P}^1$ have an even number of non-split fibres.

Section \ref{secCRIT} is dedicated to a unirationality criterion for conic bundles over fields of cohomological dimension at moste one, stated in Theorem~\ref{uniratcrit}. In~\S\ref{subsecENRIQUES}, we recall a criterion of Enriques on unirationality of conic bundles and we restate it over fields of cohomological dimension at most one, in Corollary \ref{enriquesP1}. Eventually, a proof of Theorem \ref{uniratcrit} is given in~\S\ref{subsecUNIRATCRIT}. As a corollary, we also get in~\S\ref{subsecREQUIVCRIT} a criterion for the triviality of $R$-equivalence.

In Section \ref{secMAINRESU}, we apply the criteria given in Section \ref{secCRIT} to prove the unirationality and $R$-equivalence results for conic bundles over a $2$-quasi-finite field stated in Theorems \ref{unirationalconicbundles} and \ref{Requivalenceconicbundles}. This amounts to constructing well-chosen ramified covers $\mathbf{P}^1\rightarrow\mathbf{P}^1$, for which we give general recipes in \S\ref{subsecCOVER}. We prove Theorem \ref{unirationalconicbundles} in~\S\ref{proofunirationalconicbundles} and Theorem~\ref{Requivalenceconicbundles} in~\S\ref{proofRequivalenceconicbundles}.

Eventually, in Section \ref{sectionBMcarp}, we prove in Corollary \ref{fromconj2toconj1} that the unirationality of conic bundle surfaces over finite fields is implied by Conjecture \ref{BMcarp}, which predicts the closure of rational points in the adelic set of a geometrically integral, smooth and separably rationally connected projective surface over $\mathbf{F}(t)$, for any finite field $\mathbf{F}$. The proof goes through Theorem \ref{thmKOLLARPOINTS}, where we show that if rational points are dense in the Brauer-Manin set of a conic bundle surface, then it has weak weak approximation (resp. weak approximation if $|\mathbf{F}|$ is odd) and we conclude by the dictionary of \S\ref{rappelapproximation}.

Appendix \ref{appendixA} is dedicated to the study of Brauer groups of separably rationally connected surfaces. In \S\ref{finiteBRAUER} we supply, in Proposition \ref{brauerfini}, a proof of the finiteness of the algebraic Brauer group, up to constant classes, of a separably rationally connected variety. In \S\ref{BrauerCB}, we show, in Proposition \ref{2primaryCB}, that the Brauer group of a conic bundle surface, up to constant classes, is a $2$-torsion group.

\subsection*{Acknowledgement}
I am very grateful to Olivier Wittenberg who introduced me to this question, for many useful discussions and the time that he generously gave me. I would like to express my gratitude to Antoine Chambert-Loir and Alexei Skorobogatov for their suggestions and detailed comments on a previous version of this article. I also want to thank Jean-Louis Colliot-Thélène for discussions and suggestions on Section \ref{sectionBMcarp}.
% I thank the latter for noticing that the method used in a previous version of Theorem \ref{unirationalconicbundles} might be generalised to cases where some critical fibres lie over a non-rational point, and the former for pointing out that the argument thereto given on finite fields should be generalisable to quasi-finite fields.
%Eventually, I am grateful to Alexander Semenov for suggesting a simpler proof of Lemma~\ref{lemcombinatoire}.
The author was supported by a «Contrat doctoral spécifique normalien» from the École normale supérieure de Paris and by UKRI Future Leaders Fellowship~\texttt{MR/V021362/1}.
%for noticing that the method used in a previous version might be generalised to cases where some critical fibres lie over a non-rational point, see Theorem \ref{unirationalconicbundles}. I also want to thank Antoine Chambert-Loir for his comments on a previous version of this article and for pointing out, jointly with Pierre Dèbes, that the argument thereto given on finite fields should be generalisable to quasi-finite fields. Eventually, I thank Alexander Semenov for suggesting a simpler proof of Lemma~\ref{lemcombinatoire}. The author was supported by a «Contrat doctoral spécifique normalien» from the École normale supérieure de Paris and by UKRI Future Leaders Fellowship~\texttt{MR/V021362/1}.

\section{Preliminaries}\label{secPreliminaries}

\subsection{Notations}\label{subsecNOT}

If $A$ is an abelian group and $n\in\mathbf{Z}_{\geq0}$, we denote by $A\left[n\right]$ the $n$-torsion part of $A$, by~$A\{p\}$ the $p$-primary torsion part of $A$ and $A\{p'\}\coloneqq\bigoplus_{l\text{ prime, }l\neq p}A\{l\}$.

If $X$ is a scheme, we denote by $\br(X)\coloneqq\coh_{\et}^2(X,\mathbf{G}_m)$ the \textit{Brauer group} of $X$ and if $R$ is a commutative ring we set $\br(R)\coloneqq\br(\spec(R))$. If $k$ is a field, $\br(k)$ is also the set of central simple algebras over $k$ up to Brauer equivalence, see \cite[\S2.4]{MR2266528}.

When $R$ is a discrete valuation ring, with function field $K$ and residue field~$\kappa$ whose~characteristic is different from $2$, then we have Serre's \textit{residue} map \cite[Definition 1.4.3.(ii)]{MR4304038} $\partial:\br(K)\{2\}\longrightarrow\coh^1(\kappa,\mathbf{Q}_2/\mathbf{Z}_2)$ whose restriction to the $2$-torsion of $\br(K)$ gives rise to a map
\begin{equation}\label{residuemap}
r:\br(K)[2]\longrightarrow\coh^1(\kappa,\mathbf{Z}/2\mathbf{Z}).
\end{equation}

If $k$ is a field, we denote by $\cd(k)$ its \textit{cohomological dimension} (see e.g.\ \cite[I.\S3.1]{MR1324577}). If $G$ is the absolute Galois group of $k$ and $A$ an abelian group on which $G$ acts trivially, we denote by $\coh^1(k,A)$ the first Galois cohomology group of $G$ with coefficients in $A$ (see \cite[Chapitre I, \S2]{MR1324577}) which is canonically isomorphic to $\coh^1_{\et}(\spec(k),A)$.

When $Y\rightarrow X$ is a morphism of schemes and $A$ is an abelian group, we denote by $\res_{Y/X}:\coh^1_{\et}(X,A)\rightarrow\coh^1_{\et}(Y,A)$ the \textit{restriction} morphism, see e.g.\ \cite[\S2.2.4.(2.7)]{MR4304038}. If $F/E$ is an extension of fields, the restriction map $\res_{\spec(F)/\spec(E)}$ is also denoted $\res_{F/E}$ and we view it as a morphism $\coh^1(E,A)\rightarrow\coh^1(F,A)$ where the absolute Galois groups of~$E$ and $F$ act trivially on $A$. If~$F/E$ is finite and separable, we denote by $\cores_{F/E}:\coh^1(F,A)\rightarrow\coh^1(E,A)$ the \textit{corestriction} map, see e.g.\ \cite[Chapitre I, \S 2.4]{MR1324577}).

A $k$-variety is a separated $k$-scheme of finite type and a \textit{nice curve} over $k$ is a proper, smooth and geometrically integral $k$-variety of dimension one. When $X$ is a scheme, its set of codimension one points is denoted by~$X^{(1)}$. Following \cite[Definition~0.1]{MR1408492}, we say that a $k$-variety~$X$ is \textit{split} if it contains a geometrically integral open subscheme. If $L$ is a field extension of $k$, then we say that $L$ is \textit{a splitting field }of~$V$ if~$V\otimes_kL$ is split. A splitting field of $V$ is \textit{minimal} if it does not contain any proper subfield that splits $V$.

\subsection{Generalities on $2$-quasi-finite fields}\label{subsec2QF}

Let us start by giving general properties of $2$-quasi-finite fields.

\begin{prop}\label{2QF}
Let $k$ be a $2$-quasi-finite field and $l/k$ a finite extension of $k$.
\begin{enumerate}[label=(\alph*)]
\item The cohomological dimension of $k$ is one.
\item We have $l^{\times}/(l^{\times})^2\simeq\mathbf{Z}/2\mathbf{Z}$.
\item The extension $l/k$ contains a unique extension of degree $2$ of $k$.
\end{enumerate}
\end{prop}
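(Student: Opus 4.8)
The plan is to exploit the hypothesis that the absolute Galois group of $k$ is $G\coloneqq\prod_{p\in S}\mathbf{Z}_p$ with $2\in S$, and to reduce everything to elementary facts about open subgroups of this profinite group. The key structural observation is that for a finite separable extension $l/k$ of degree $n$, the absolute Galois group $G_l$ of $l$ is an open subgroup of $G$ of index $n$. Since $k$ is perfect, all finite extensions are separable, so this applies throughout.

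\begin{proof}
Write $G=\prod_{p\in S}\mathbf{Z}_p$ for the absolute Galois group of $k$, with $2\in S$.

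First I would prove (a). The cohomological dimension of $G$ is the supremum over primes $p$ of the $p$-cohomological dimension. For each $p\in S$ the group $\mathbf{Z}_p$ is a free pro-$p$ group of rank one, hence has $p$-cohomological dimension $1$ (see \cite[I.\S3.4 and I.\S4.2]{MR1324577}), while the other factors $\prod_{q\neq p}\mathbf{Z}_q$ have trivial $p$-cohomology since they are of order prime to $p$ in the relevant sense. Thus $\cd_p(G)\leq1$ for every prime $p$, and taking the supremum gives $\cd(k)=\cd(G)\leq1$; since $G$ is infinite, $\cd(k)=1$.

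For (b), let $l/k$ be a finite extension. The absolute Galois group $G_l$ of $l$ is an open subgroup of $G$, hence (being a closed subgroup of finite index in $\prod_{p\in S}\mathbf{Z}_p$) is itself isomorphic to $\prod_{p\in S}\mathbf{Z}_p$ after forgetting the embedding; in particular its maximal pro-$2$ quotient is $\mathbf{Z}_2$. By Kummer theory, $l^{\times}/(l^{\times})^2\simeq\coh^1(l,\mathbf{Z}/2\mathbf{Z})=\mathrm{Hom}_{\mathrm{cont}}(G_l,\mathbf{Z}/2\mathbf{Z})$, and continuous homomorphisms from $\prod_{p\in S}\mathbf{Z}_p$ to $\mathbf{Z}/2\mathbf{Z}$ factor through the unique $\mathbf{Z}/2\mathbf{Z}$-quotient coming from the $\mathbf{Z}_2$-factor, giving a group of order $2$. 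Hence $l^{\times}/(l^{\times})^2\simeq\mathbf{Z}/2\mathbf{Z}$.

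Finally, for (c), degree $2$ extensions of $k$ correspond to open subgroups of $G$ of index $2$, equivalently to kernels of surjections $G\twoheadrightarrow\mathbf{Z}/2\mathbf{Z}$, which by the computation in (b) form a set of cardinality $|\mathrm{Hom}(G,\mathbf{Z}/2\mathbf{Z})\setminus\{0\}|=1$: there is exactly one quadratic extension $k_2/k$ of $k$, obtained from the index-$2$ subgroup of the $\mathbf{Z}_2$-factor. For the extension $l/k$, I would argue that $l$ contains $k_2$ because the maximal pro-$2$ quotient of $G_l$ is the index-$[l:k]_2$ piece sitting inside the $\mathbf{Z}_2$-factor of $G$, which always contains the unique index-$2$ subgroup; concretely, the compositum $l\cdot k_2$ has degree over $l$ dividing the number of quadratic extensions of $l$, namely $1$ by part (b) applied to $l$, so $k_2\subseteq l$. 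Uniqueness of this degree $2$ subextension then follows again from part (b) applied to $k$, since $k$ admits a unique quadratic extension.

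\end{proof}

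The step I expect to be the main obstacle is part (c): the subtlety is not the \emph{existence} of a unique quadratic extension of $k$ itself (that is immediate from (b)), but showing that every finite extension $l$ \emph{contains} it. The clean way is to apply (b) to $l$ to see $l$ has a unique quadratic extension, hence $k_2\cdot l=l$, i.e.\ $k_2\subseteq l$; making precise that the quadratic subextension of $l/k$ is unique requires tracking the index-$2$ subgroup inside the $\mathbf{Z}_2$-factor, which is canonical because $\mathbf{Z}_2$ has a unique open subgroup of each index.
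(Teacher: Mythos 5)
Your proofs of (a) and (b) are correct and essentially match the paper's (very terse) argument: for (a) the paper instead realises $G=\prod_{p\in S}\mathbf{Z}_p$ as a closed subgroup of $\widehat{\mathbf{Z}}$ and quotes $\cd(G)\leq\cd(\widehat{\mathbf{Z}})=1$, whereas you bound $\cd_p(G)$ prime by prime via the Sylow subgroups; both are standard. (Like the paper, your (b) tacitly assumes $\mathrm{char}(k)\neq2$ for Kummer theory; in characteristic $2$ a perfect field satisfies $l^{\times}=(l^{\times})^{2}$, so the statement must be read with that hypothesis.)

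The genuine gap is in (c), exactly at the step you single out as the main obstacle. Your ``clean way'' is a non sequitur: from $[l\cdot k_2:l]\in\{1,2\}$ and the fact that $l$ has a unique quadratic extension you cannot conclude $[l\cdot k_2:l]=1$, because $l\cdot k_2$ could simply \emph{be} that unique quadratic extension of $l$. Moreover, the containment $k_2\subseteq l$ you are trying to establish is false whenever $[l:k]$ is odd: take $k=\mathbf{F}_3$ and $l=\mathbf{F}_{27}$, which contains no copy of $\mathbf{F}_9$. On the group side, $G_l\cap\mathbf{Z}_2=2^{a}\mathbf{Z}_2$ with $2^{a}$ the $2$-part of $[l:k]$, and the condition equivalent to $k_2\subseteq l$ is that this be \emph{contained in} the index-$2$ subgroup $2\mathbf{Z}_2$, which holds precisely when $a\geq1$; your assertion that it ``always contains the unique index-$2$ subgroup'' has the inclusion backwards and fails for $a=0$. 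So part (c) is only true under the tacit hypothesis $2\mid[l:k]$ --- which is how the paper uses it, e.g.\ in Lemma~\ref{reducingdegree}, where the relevant extension has degree $2d$. The argument hiding behind the paper's phrase ``Galois correspondence and the fact that the $2$-Sylow of $G$ is $\mathbf{Z}_2$'' is: $G$ is abelian, so $l/k$ is Galois with group $G/G_l$, a finite quotient of $\prod_{p\in S}\mathbf{Z}_p$ and hence cyclic; a finite cyclic group of even order has exactly one subgroup of index $2$, giving exactly one quadratic subextension of $l/k$. Your uniqueness step (via the uniqueness of the quadratic extension of $k$) is fine; it is the existence half that your argument does not deliver.
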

\begin{proof}
Denote by $G$ the absolute Galois group of $k$. First notice that assertions (b) and~(c) are immediate combinations of Galois correspondence and the fact that the $2$-Sylow of~$G$ is~$\mathbf{Z}_2$. For (a), using that $G$ is a closed subgroup of $\widehat{\mathbf{Z}}$, \cite[Chapitre~I,~\S3.3, Proposition~14]{MR1324577} ensures that $\cd(G)\leq\cd(\widehat{\mathbf{Z}})$. This proves that $\cd(G)\leq1$ by \cite[Chapitre~I,~\S3.2, Exemple 1)]{MR1324577}, which is an equality since~$G\neq1$.
\end{proof}

The following proposition supplies a description of corestriction maps for finite extensions of a $2$-quasi-finite field.

\begin{prop}\label{cores2QF}
If the characteristic of $k$ is different from~$2$ and $E/F/k$ are finite field extensions of~$k$, then we have canonical isomorphisms $\coh^1(F,\mathbf{Z}/2\mathbf{Z})\simeq\mathbf{Z}/2\mathbf{Z}$ and $\coh^1(E,\mathbf{Z}/2\mathbf{Z})\simeq\mathbf{Z}/2\mathbf{Z}$ under which the morphism $\cores_{E/F}:\coh^1(E,\mathbf{Z}/2\mathbf{Z})\rightarrow\coh^1(F,\mathbf{Z}/2\mathbf{Z})$ is the identity morphism of~$\mathbf{Z}/2\mathbf{Z}$.
\end{prop}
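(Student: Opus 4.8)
The plan is to reduce the statement to showing that $\cores_{E/F}$ is a \emph{nonzero} homomorphism, and then to compute it explicitly via the transfer map of the abelian profinite group $G_k$.

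First I would fix the identifications. Since the characteristic of $k$ is different from $2$, Kummer theory gives a canonical isomorphism $\coh^1(F,\mathbf{Z}/2\mathbf{Z})\simeq F^{\times}/(F^{\times})^2$, and Proposition~\ref{2QF}(b) shows the latter has order $2$; the same holds over $E$. As a group of order $2$ admits a unique isomorphism onto $\mathbf{Z}/2\mathbf{Z}$, these isomorphisms are canonical, and under them it only remains to prove that $\cores_{E/F}$ does not vanish, since the unique nonzero endomorphism of $\mathbf{Z}/2\mathbf{Z}$ is the identity.

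Next I would exploit the abelian structure. Write $G_F$ (resp.\ $G_E$) for the absolute Galois group of $F$ (resp.\ $E$); by definition of $2$-quasi-finiteness these are finite-index closed subgroups of $G_k\simeq\prod_{p\in S}\mathbf{Z}_p$, hence abelian, and $\coh^1(F,\mathbf{Z}/2\mathbf{Z})\simeq\mathrm{Hom}_{\mathrm{cont}}(G_F,\mathbf{Z}/2\mathbf{Z})$. Galois corestriction coincides with corestriction in the cohomology of the profinite group $G_F$, which on $\coh^1=\mathrm{Hom}(-,\mathbf{Z}/2\mathbf{Z})$ is precomposition with the transfer (Verlagerung) $\mathrm{Ver}\colon G_F\to G_E$, that is, $\cores_{E/F}(\psi)=\psi\circ\mathrm{Ver}$. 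The crucial simplification is that, $G_F$ being abelian, the transfer into a subgroup of index $n\coloneqq[E:F]$ is the $n$-th power map $g\mapsto g^n$ (and $g^n\in G_E$ because $G_F/G_E$ has order $n$); this is consistent with the relation $\cores_{E/F}\circ\res_{E/F}=n$. Thus $\cores_{E/F}(\psi)(g)=\psi(g^n)$ for all $g\in G_F$. It then suffices to evaluate this on a generator of the $2$-primary part: writing $G_F\simeq\prod_{p\in S}p^{b_p}\mathbf{Z}_p$ and $G_E\simeq\prod_{p\in S}p^{c_p}\mathbf{Z}_p$ inside $\widehat{\mathbf{Z}}$, the nonzero class $\psi\in\coh^1(E,\mathbf{Z}/2\mathbf{Z})$ is the projection of $G_E$ onto its $2$-Sylow $2^{c_2}\mathbf{Z}_2$ followed by reduction modulo $2$. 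Choosing $g$ a topological generator of the $2$-Sylow $2^{b_2}\mathbf{Z}_2$ of $G_F$ and writing $n=2^{a}m$ with $m$ odd and $a=c_2-b_2$, the $2$-component of $g^n$ equals $m\,2^{c_2}$, an odd multiple of a generator of $2^{c_2}\mathbf{Z}_2$, so that $\psi(g^n)=m\equiv1\pmod2$. Hence $\cores_{E/F}(\psi)\neq0$, as wanted.

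The main obstacle, and essentially the only point requiring care, is the identification of Serre's corestriction with the group-theoretic transfer in the continuous (profinite) setting, together with the formula $\mathrm{Ver}(g)=g^n$ for abelian groups; once these are in place the $2$-primary computation above is routine. As a cross-check, and as a self-contained alternative for the odd-degree case, when $n$ is odd one can bypass the transfer: the identity $\cores_{E/F}\circ\res_{E/F}=n=[E:F]$ reduces modulo $2$ to the identity, forcing $\res_{E/F}\colon\mathbf{Z}/2\mathbf{Z}\to\mathbf{Z}/2\mathbf{Z}$ to be injective, hence bijective, whence $\cores_{E/F}=\res_{E/F}^{-1}$ is the identity.
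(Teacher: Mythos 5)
Your proof is correct, but it takes a genuinely different route from the paper's. The first half is identical: Kummer theory plus Proposition~\ref{2QF}(b) give $\coh^1(F,\mathbf{Z}/2\mathbf{Z})\simeq F^{\times}/(F^{\times})^2\simeq\mathbf{Z}/2\mathbf{Z}$ and likewise over $E$. For the corestriction, the paper disposes of the matter in one line: since $\cd(k)\leq1$, the map $\cores_{E/F}$ on $\coh^1$ with finite coefficients is an isomorphism by the cited general result (Gille--Szamuely, Proposition~3.3.11), and the only automorphism of $\mathbf{Z}/2\mathbf{Z}$ is the identity. You instead reduce to showing $\cores_{E/F}\neq0$ and compute it directly: you identify degree-one corestriction with precomposition by the transfer $\mathrm{Ver}\colon G_F\to G_E$, observe that for the abelian group $G_F\simeq\prod_{p\in S}p^{b_p}\mathbf{Z}_p$ the transfer into the index-$n$ subgroup $G_E$ is the $n$-th power map, and evaluate the nontrivial character of the $2$-Sylow on a generator to get the odd value $m\equiv1\pmod 2$. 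All the ingredients you invoke (the transfer description of $\cores$ in the continuous setting, $\mathrm{Ver}(g)=g^n$ for abelian groups, the product decomposition of closed finite-index subgroups of $\prod_p\mathbf{Z}_p$) are standard and correctly applied, so the argument is complete. The trade-off: your computation is more self-contained and makes the nonvanishing of $\cores_{E/F}$ entirely explicit, but it leans on the precise structure of the absolute Galois group of a $2$-quasi-finite field, whereas the paper's one-line argument uses only $\cd(k)\leq1$ and would apply verbatim to any field of cohomological dimension at most one whose finite extensions have a single nontrivial square class. Your closing observation for odd $[E:F]$ via $\cores_{E/F}\circ\res_{E/F}=[E:F]$ is a correct sanity check, though subsumed by the main computation.
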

\begin{proof}
For the first part of the statement, since $k$ has characteristic different from $2$, Kummer's exact sequence ensures that $\coh^1(E,\mathbf{Z}/2\mathbf{Z})\simeq E^{\times}/(E^{\times})^2$ and $\coh^1(F,\mathbf{Z}/2\mathbf{Z})\simeq F^{\times}/(F^{\times})^2$, and they are isomorphic to $\mathbf{Z}/2\mathbf{Z}$ by (b) of Proposition \ref{2QF}. For the second part of the statement, since~$k$ has cohomological dimension one, the map $\cores_{E/F}:\coh^1(E,\mathbf{Z}/2\mathbf{Z})\rightarrow\coh^1(F,\mathbf{Z}/2\mathbf{Z})$ is an isomorphism by \cite[Proposition 3.3.11]{MR2392026}. The only automorphism of $\mathbf{Z}/2\mathbf{Z}$ being the identity map, this proves the statement.
\end{proof}

\subsection{Residues and splitness}\label{subsecCB}

%The following definition encapsulates Definition \ref{CONIC}.

%To give a definition of conic bundles,
%Let us first recall that a \textit{conic} over a field~$k$ is a $k$-variety which is isomorphic to a quadric in $\mathbf{P}^2_k$. In the literature, many definitions of conic bundles may be encountered. We start by summarising them in the following definition.
%
%\begin{defi}
%Let $S$ be a scheme. A \textit{conic bundle over $S$} is a proper and flat morphism $f:X\rightarrow S$ where $X$ is integral, whose generic fibre is smooth and such that all geometric fibre of $f$ are isomorphic to a conic. Furthermore, the conic bundle $f$ is said to be \textit{regular} if $X$ is a regular scheme.
%\end{defi}
%\begin{rem}
%Let $S$ be a Dedekind scheme and $f:X\rightarrow S$ a conic bundle where $X$ is an integral scheme. Then $f$ is flat by \cite[Proposition 14.14]{MR2675155}. Furthermore, $f$ is proper if it is further assumed to be finitely presented and separated by \cite[Corollaire~15.7.10]{MR0217086}.
%\end{rem}

\begin{defi}
Let $S$ be a scheme. Two conic bundles $f:X\rightarrow S$ and $g:Y\rightarrow S$ are said to be \textit{equivalent} if there exists a dense open subset $U$ of $S$ such that $f^{-1}(U)$ and $g^{-1}(U)$ are isomorphic over $U$.
\end{defi}

We recall the following proposition, which is useful in the rest of the article.

\begin{prop}[{\cite[Lemma 11.3.2]{MR4304038}}]\label{equweakconicbundle}
Let $C$ be a smooth and geometrically integral curve. If $f:X\rightarrow C$ is a conic bundle, then there exists a regular conic bundle $g:Y\rightarrow C$ equivalent to $f$ such that all fibres of $g$ are integral.
\end{prop}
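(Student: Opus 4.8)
The plan is to take any conic bundle $f:X\rightarrow C$ and modify it over a dense open subset of $C$ so as to obtain an equivalent regular conic bundle with integral fibres. Since equivalence only concerns the behaviour over a dense open, I would first work at the generic point: the generic fibre $X_\eta$ is a smooth conic over the function field $K=k(C)$, that is, a smooth plane conic defined by a ternary quadratic form $q$ over $K$. The strategy is to spread out a good integral model of this conic over $C$ and then resolve any failure of regularity or integrality of the fibres fibre-by-fibre, which, since $C$ is a curve, happens only at finitely many closed points.

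Concretely, I would start from the homogeneous equation of $X_\eta$ in $\mathbf{P}^2_K$, clear denominators so that the coefficients lie in the local rings $\mathcal{O}_{C,s}$ for each closed point $s$, and consider the closed subscheme $Y\subset \mathbf{P}^2_C$ cut out by the resulting quadratic form over a dense open $U$ where all coefficients are regular and the discriminant divisor is well-behaved. Over the generic point this recovers $X_\eta$, so the projection $g:Y\rightarrow U$ is equivalent to $f$. The remaining task is local at each closed point $s\in U$: I must choose the model so that $Y$ is regular at the points of the fibre $Y_s$ and so that $Y_s$ is integral rather than a pair of distinct lines (a split singular conic). The key tool is the residue map of equation (\ref{residuemap}), or rather the diagonalisation of $q$ over the discrete valuation ring $\mathcal{O}_{C,s}$: after diagonalising $q=\langle a,b,c\rangle$ with $a,b,c$ units or uniformisers chosen to minimise valuations, the arithmetic of the valuations of $a,b,c$ modulo $2$ controls both the regularity of the total space and the splitness of the special fibre.

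The main steps in order are therefore: (1) reduce to the generic fibre and fix a ternary quadratic form representing $X_\eta$ over $K$; (2) for each closed point $s$, diagonalise the form over $\mathcal{O}_{C,s}$ and normalise the entries so that at most one of them has odd valuation, using that scaling a variable by the uniformiser changes a valuation by $2$ and scaling the whole form is harmless in $\mathbf{P}^2$; (3) check that this normalised local model is regular at $Y_s$ and that $Y_s$ is integral (the non-split singular conic or the smooth conic, never two lines); and (4) glue these finitely many local modifications together with the original model over $U$ to produce the desired regular conic bundle $g:Y\rightarrow C$, verifying that it remains equivalent to $f$ since the modifications only affect fibres over finitely many points and we may shrink $U$ if necessary. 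Patching local models into a single projective family over $C$ is routine once the local pictures agree on overlaps, which they do generically.

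The hard part will be step (2)--(3): showing that one can always normalise the local quadratic form so that the resulting total space is regular while keeping the special fibre integral, and in particular ruling out the split case. This is precisely the arithmetic of residues alluded to in \S\ref{subsecCB}: a singular fibre $Y_s$ is split (two rational lines) exactly when the residue of the corresponding quaternion class vanishes, and non-split exactly when it does not, so the regular integral model is the one whose special fibres carry the \emph{non-zero} residue data. Making this correspondence precise, and checking that the minimal-valuation normalisation of the diagonal form realises the regular model from \cite[Lemma 11.3.2]{MR4304038}, is where the real content lies; the rest is spreading-out and gluing. Since the statement is quoted verbatim from that reference, I would ultimately appeal to its proof for the delicate local analysis rather than reproving the residue computation from scratch.
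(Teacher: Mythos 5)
First, a calibration point: the paper does not prove this statement at all --- Proposition~\ref{equweakconicbundle} is imported verbatim from \cite[Lemma 11.3.2]{MR4304038} --- so there is no in-paper argument to compare yours with. Your outline nevertheless has the right general shape, and your step (2) is exactly the local normalisation the paper does carry out (for a different purpose) in the proof of Proposition~\ref{splitresiduemap}: diagonalise, rescale variables, and reduce to $q=ax^2+by^2-z^2$ with $v(b)=0$ and $v(a)\in\{0,1\}$.

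The genuine gap is in your step (3). It is false that the minimal-valuation model ``never'' yields two lines. When $v(a)=1$, the special fibre of $V(ax^2+by^2-z^2)$ is $by^2=z^2$, and this is a union of two \emph{rational} lines --- hence not integral --- precisely when $b$ is a square in the residue field, i.e.\ precisely when the residue of $[X_\eta]$ at $s$ \emph{vanishes}. So the problematic points are not the ones ``carrying non-zero residue data'' but the opposite ones: those where the residue is trivial and yet every rescaling of your diagonal form still has an entry of odd valuation. There the correct local model is a \emph{smooth} conic over $\mathscr{O}_{C,s}$, and producing it requires an input your sketch does not contain: the quaternion class of $X_\eta$ is unramified at $s$, hence lies in the image of $\br(\mathscr{O}_{C,s})\rightarrow\br(K)$ and is represented by an Azumaya quaternion algebra over $\mathscr{O}_{C,s}$ (e.g.\ a maximal order), whose associated conic is the desired smooth model; mere renormalisation of a fixed diagonalisation cannot reach it. Since this is exactly the ``delicate local analysis'' you ultimately defer back to \cite[Lemma 11.3.2]{MR4304038}, your proposal stops where the content begins: as a citation it matches what the paper does, but as a proof it leaves the crux unestablished. (Your step (4) is indeed routine once each local model is spread out to an open neighbourhood of $s$ over which it is smooth away from $s$ and then glued to the generic smooth model.)
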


The following proposition describes a minimal splitting field of the special fibre of a regular conic bundle over a DVR, via the residue of the generic fibre.
\begin{prop}\label{splitresiduemap}
Let $R$ be a discrete valuation ring, $K$ its fraction field,~$\kappa$ its residue field. Assume further that~$2$ is invertible in $R$ and denote by $r$ the associated residue map defined as in~(\ref{residuemap}). Let $C$ be a~smooth~conic over~$K$ and $\mathscr{X}$ an integral $R$-proper scheme such that $\mathscr{X}$ is regular, with generic fibre $\mathscr{X}_K\simeq C$. Let $\alpha\in \kappa^{\times}$ be a representative of the class~$r([C])\in\coh^1(\kappa,\mathbf{Z}/2\mathbf{Z})=\kappa^{\times}/(\kappa^{\times})^2$. Then $\kappa(\sqrt{\alpha})$ is a minimal splitting field of~$\mathscr{X}_{\kappa}$.
\end{prop}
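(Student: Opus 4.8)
The plan is to reduce the statement to an explicit local computation. Since $\mathscr{X}\to\spec R$ is a regular conic bundle over the local ring $R$, it is cut out by a ternary quadratic form: there is an isomorphism $\mathscr{X}\simeq\{q=0\}\subset\mathbf{P}^2_R$ where $q=\langle a_0,a_1,a_2\rangle$ has coefficients $a_i\in R\setminus\{0\}$, is diagonal because $2$ is invertible, and is nondegenerate over $K$ (as $\mathscr{X}_K\simeq C$ is smooth). The Brauer class of the generic fibre is then the quaternion symbol $[C]=(-a_0a_1,-a_0a_2)\in\br(K)[2]$, and I would compute its residue $r([C])$ through the tame symbol formula. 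First I would normalise $q$ by scaling the variables by units and by squares and by scaling $q$ by a unit, operations changing neither $\mathscr{X}$ up to isomorphism nor $[C]$, so as to bring the valuations $v(a_i)$ into a standard shape.

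The crucial point is that regularity of the total space, together with reducedness of the special fibre $\mathscr{X}_\kappa=\{\bar q=0\}$, forces exactly one of two normal forms. Indeed, if two coefficients were non-units the reduction $\bar q$ would have rank at most one and the special fibre would be a non-reduced double line; and if a single coefficient, say $a_2$, had even positive valuation, then a local computation at the node $[0:0:1]$ (in the chart $x_2=1$ the equation $a_0x_0^2+a_1x_1^2+a_2=0$ has vanishing linear part in the regular parameters $(x_0,x_1,\pi)$) would show that $\mathscr{X}$ is singular there. Hence either \textbf{(A)} all $a_i$ are units, or \textbf{(B)} exactly one coefficient, say $a_2=\pi u_2$ with $u_2$ a unit, has valuation exactly $1$, in which case a symmetric check shows $\mathscr{X}$ is indeed regular at the node.

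In case (A) the special fibre $\{\bar a_0x_0^2+\bar a_1x_1^2+\bar a_2x_2^2=0\}$ is a smooth plane conic over $\kappa$, hence geometrically integral and split, with minimal splitting field $\kappa$; correspondingly the quaternion $(-a_0a_1,-a_0a_2)$ is unramified, so $r([C])=0$, $\alpha\in(\kappa^\times)^2$ and $\kappa(\sqrt{\alpha})=\kappa$. In case (B) the special fibre is the pair of lines $\{\bar a_0x_0^2+\bar a_1x_1^2=0\}$ meeting at the rational point $[0:0:1]$, which are defined over $\kappa(\sqrt{-\bar a_0\bar a_1})$: they are $\kappa$-rational (so $\mathscr{X}_\kappa$ is split, with minimal splitting field $\kappa$) when $-\bar a_0\bar a_1\in(\kappa^\times)^2$, and Galois-conjugate (so $\mathscr{X}_\kappa$ is non-split, its minimal splitting field being the degree-two extension $\kappa(\sqrt{-\bar a_0\bar a_1})$) otherwise. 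On the other hand the tame symbol of $(-a_0a_1,-\pi u_2a_0)$ at $R$ is the class of $-\bar a_0\bar a_1$ in $\kappa^\times/(\kappa^\times)^2$, so $\alpha=-\bar a_0\bar a_1$ up to squares and $\kappa(\sqrt{\alpha})$ is exactly the field of definition of the two lines. In both cases $\kappa(\sqrt{\alpha})$ is a minimal splitting field, the only nontrivial case being the degree-two extension, which is minimal because its sole proper subextension is $\kappa$, over which the fibre is not split.

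The main obstacle I expect is the local regularity analysis: pinning down that a regular model forces the valuation of the unique non-unit coefficient to be exactly $1$ (rather than merely odd), and, dually, checking that this normalised model is genuinely regular at the node—this is what ties the arithmetic of the residue to the geometry of the degenerate fibre. The matching of the tame symbol with the discriminant $-\bar a_0\bar a_1$ governing the splitting of the two lines is then a direct verification, once the conventions for the identification $\coh^1(\kappa,\mathbf{Z}/2\mathbf{Z})\simeq\kappa^\times/(\kappa^\times)^2$ and for the residue map $r$ of (\ref{residuemap}) are fixed.
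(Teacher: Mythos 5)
Your core computation---reducing to a diagonal form $\langle a_0,a_1,a_2\rangle$, showing that the relevant cases are ``all units'' versus ``exactly one coefficient of valuation one'', and matching the tame symbol of $(-a_0a_1,-a_0a_2)$ with the discriminant $-\bar a_0\bar a_1$ of the pair of lines---is correct and is essentially the computation the paper performs on its explicit model $V(ax^2+by^2-z^2)$. But there is a genuine gap at the very first step: you assert that $\mathscr{X}$ ``is a regular conic bundle over the local ring $R$'' and hence isomorphic to $\{q=0\}\subset\mathbf{P}^2_R$. The hypothesis gives you only an integral, regular, proper $R$-scheme with generic fibre $C$; such a model need not be a conic in $\mathbf{P}^2_R$ (think of the blow-up of one at a closed point of the special fibre, which is again a regular proper integral model of $C$). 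The paper's proof is structured precisely to avoid this: it constructs \emph{one} explicit regular model $\mathscr{C}=V(ax^2+by^2-z^2)$, verifies the statement for $\mathscr{C}$ by the computation you describe, and then transfers the conclusion to the arbitrary $\mathscr{X}$ of the statement via \cite[Corollary~2.3]{MR3467133} (the invariance of non-split fibres among equivalent regular models). Your argument is missing this transfer entirely, and without it you have only proved the proposition for models that happen to be conics in $\mathbf{P}^2_R$.

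Two smaller points in the normal-form analysis also need repair. First, you invoke ``reducedness of the special fibre'' to rule out two non-unit coefficients, but reducedness is not a hypothesis (integrality of $\mathscr{X}$ concerns the total space, and a regular total space can have a non-reduced special fibre, e.g.\ $\spec R[x]/(x^2-\varpi)$); what actually rules this case out is a local computation at the closed points of $\{x_0=0\}$ where the residual binary form $\bar u_1x_1^2+\bar u_2x_2^2$ vanishes, where the equation lies in $\mathfrak{m}^2$. Second, your normalisation ``by units and by squares'' is model-changing whenever a rescaling involves a non-unit: $x'=\varpi^{\lfloor v(a)/2\rfloor}x$ is not an automorphism of $\mathbf{P}^2_R$, so it does not preserve $\mathscr{X}$ up to isomorphism, only its generic fibre. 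This is harmless in the paper's set-up (where one is free to build a fresh model) but incompatible with your stated intention of keeping $\mathscr{X}$ fixed; if you keep $\mathscr{X}$ fixed you must instead argue, as you begin to do at the node, that regularity forces the non-unit coefficient to have valuation exactly $1$ --- and note that the obstruction there is valuation $\geq 2$, not ``even positive valuation''.
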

Before proving Proposition \ref{splitresiduemap}, we recall that geometrically, a conic is either isomorphic to a projective line, or a union of two projective lines, or a double line, so that a conic is split if and only if it is a union of two projective lines, or it is geometrically irreducible.
\begin{proof}
We first construct an explicit integral $R$-scheme $\mathscr{C}$ such that $\mathscr{C}$ is regular, with generic fibre $\mathscr{C}_K\simeq C$ and we prove that the statement holds for~$\mathscr{C}$. Then, for $\mathscr{X}$ as in the statement, since $\mathscr{X}_K\simeq\mathscr{C}_K$ the conclusion follows from \cite[Corollary~2.3]{MR3467133}, whose statement is true without assuming that $\kappa$ is perfect, the proof being given in the Remark following it.

To construct $\mathscr{C}$, denote by $v$ the valuation on $R$, pick $\varpi$ a uniformiser of $R$ and choose~$q\in K[x,y,z]$ a quadratic form such that $C=V(q)\subset\mathbf{P}^2_K$. Let us prove that $q$ may be chosen of the form $ax^2+by^2-z^2$ where $a\in R$ and~$b\in R^{\times}$. Since~$\kappa$ is of characteristic different from $2$, we may assume that $q$ is diagonal and write it~$q=ax^2+by^2+cz^2$ where $a,b,c$ further lie in $K^{\times}$ as~$C$ is smooth. After multiplying~$q$ by a constant, we may also assume that $a,b,c\in R$. Furthemore, after the change of variables $x'=\varpi^{\lfloor v(a)/2\rfloor}x$, $y'=\varpi^{\lfloor v(b)/2\rfloor}y$ and $z'=\varpi^{\lfloor v(c)/2\rfloor}z$ we may assume that $v(a),v(b),v(c)\in\{0,1\}$. Since two among the integers $v(a)$, $v(b)$ and $v(c)$ have same class in $\mathbf{Z}/2\mathbf{Z}$, we may rearrange the variables of $q$ in such a way that $v(b)=v(c)$. If the latter is equal to~$0$, we may then divide~$q$ by $-c\in R^{\times}$ so that we may assume that $q=ax^2+by^2-z^2$ with~$v(b)=0$. Otherwise, we may divide~$q$ by $c$ and make the change of variable $x'=\varpi^{\varepsilon}x$ where $\varepsilon=0$ if $v(a)=1$, and $\varepsilon=-1$ is $v(a)=0$, so that we may again assume that $q=ax^2+by^2-z^2$ with $v(b)=0$.

Let us now verify the statement for $\mathscr{C}=V(q)\subset\mathbf{P}^2_R$, which is regular, and for which we clearly have that $\mathscr{C}_K\simeq C$. By \cite[Equation (1.18)]{MR4304038}, we have that $r([C])=[b^{v(a)}]$. If $v(a)=1$, then $a=0\in\kappa$. Thus, $\mathscr{C}_{\kappa}$ is the zero locus of the quadric $by^2-z^2$ which may be rewritten in $\kappa(\sqrt{b})$ as $(y\sqrt{b}-z)(y\sqrt{b}+z)$, so that $\kappa(\sqrt{b})$ splits $\mathscr{C}_{\kappa}$. Moreover, if $b$ is a square in~$\kappa$ then $\kappa(\sqrt{b})$ is clearly minimal. Otherwise, $by^2-z^2$ is irreducible, so that~$\kappa(\sqrt{b})$ is again minimal. Now, if $v(a)=0$, then $\mathscr{C}_{\kappa}$ is the zero locus of a non-degenerate, hence geometrically irreducible conic, so that $\mathscr{C}_{\kappa}$ is split. This proves the statement for $\mathscr{C}$.
\end{proof}

The following proposition is also used thereafter.

\begin{prop}[{\cite[Corollary~2.3 and Remark]{MR3467133}}]\label{NSbirequ}
Let $C$ be a smooth geometrically integral curve over a field $k$ of characteristic different from $2$. If $f:X\rightarrow C$ and $g:Y\rightarrow C$ are two equivalent regular conic bundles, then the non-split fibres of~$f$ and~$g$ lie over the same points of $C$.
\end{prop}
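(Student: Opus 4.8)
The plan is to reduce the whole statement to a computation of residues of the generic fibre, so that the non-split locus of a regular conic bundle becomes an intrinsic invariant of the Brauer class of its generic fibre. Since $f$ and $g$ are equivalent, there is a dense open $U\subseteq C$ with $f^{-1}(U)\simeq g^{-1}(U)$ over $U$; restricting to the generic point $\eta$ of $C$ yields an isomorphism of smooth conics $X_\eta\simeq Y_\eta$ over $K\coloneqq\kappa(\eta)=k(C)$. As isomorphic smooth conics correspond to the same quaternion algebra, they define a common class $\beta\coloneqq[X_\eta]=[Y_\eta]\in\br(K)[2]$. The goal is then to show that, for each closed point $t\in C$, whether the fibre over $t$ is non-split depends only on $\beta$ and $t$, and not on the chosen model.

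First I would localise at a closed point $t\in C$. The local ring $R\coloneqq\mathcal{O}_{C,t}$ is a discrete valuation ring with fraction field $K$ and residue field $\kappa(t)$, and $2$ is invertible in $R$ because $k$ has characteristic different from $2$. The base change $\mathscr{X}\coloneqq X\times_C\spec R$ is an integral, regular, $R$-proper scheme (these properties being inherited from $X$ and from the localisation), with generic fibre $\mathscr{X}_K\simeq X_\eta$ and special fibre $\mathscr{X}_{\kappa(t)}=X_t$. Proposition~\ref{splitresiduemap} therefore applies: if $r_t:\br(K)[2]\to\coh^1(\kappa(t),\mathbf{Z}/2\mathbf{Z})=\kappa(t)^\times/(\kappa(t)^\times)^2$ denotes the residue map (\ref{residuemap}) attached to $R$, and $\alpha_t\in\kappa(t)^\times$ represents $r_t(\beta)$, then $\kappa(t)(\sqrt{\alpha_t})$ is a minimal splitting field of $X_t$. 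The key point is that $r_t(\beta)$ depends only on $\beta$ and on $t$, and is insensitive to whether we started from $X$ or from $Y$.

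It remains to translate splitness of the fibre into the vanishing of this residue. Because $X$ is regular, the degenerate fibres are never double lines: geometrically each fibre is either $\mathbf{P}^1$ or a transverse union of two distinct lines, as the computation in the proof of Proposition~\ref{splitresiduemap} shows. Hence $X_t$ is split exactly when it is geometrically irreducible or a union of two $\kappa(t)$-rational lines, that is, when its minimal splitting field is $\kappa(t)$ itself; and $X_t$ is non-split precisely when $\kappa(t)(\sqrt{\alpha_t})$ is a genuine quadratic extension, i.e.\ when $r_t(\beta)\neq0$. The same statement holds verbatim for $Y_t$ with the same class $\beta$, so $X_t$ is non-split if and only if $r_t(\beta)\neq0$ if and only if $Y_t$ is non-split. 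Running over all closed points $t$ gives the equality of the two non-split loci. The main obstacle I anticipate is not the residue bookkeeping but the verification that $\mathscr{X}$ genuinely satisfies the hypotheses of Proposition~\ref{splitresiduemap}---in particular integrality and regularity after localisation---together with the careful use of regularity of $X$ to exclude double-line fibres, which is exactly what makes the dictionary \emph{non-split $\Leftrightarrow$ non-zero residue} an equivalence rather than merely an implication.
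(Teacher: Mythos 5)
Your argument is correct and is essentially the argument the paper intends: the paper states this proposition as a bare citation of \cite[Corollary~2.3 and Remark]{MR3467133}, and your derivation --- equivalent bundles have isomorphic generic fibres, hence the same class $\beta\in\br(k(C))[2]$, and by Proposition~\ref{splitresiduemap} applied to the localisation at each closed point $t$ the splitness of the fibre of any regular proper model over $t$ is governed solely by the vanishing of $r_t(\beta)$ --- is precisely the content of that citation. The only remark worth making is that the model-independence you single out as the main obstacle is already packaged into the statement of Proposition~\ref{splitresiduemap} (whose own proof invokes the same external reference), so your closing discussion of double-line fibres is superfluous: the minimal-splitting-field conclusion, together with the definition of a minimal splitting field, yields the dictionary $X_t$ non-split $\Leftrightarrow$ $r_t(\beta)\neq 0$ directly.
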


\subsection{Approximation over function fields}\label{rappelapproximation}

Let $C$ be a nice curve over a finite field $\mathbf{F}$, and $K$ its function field, we denote by~$\Omega_K$ the set of places of $K$, that is, closed points of $C$. If $X$ is a proper $K$-variety and $S\subset\Omega_K$ is finite, we set $X(K_{\Omega}^S)=\prod_{v\in\Omega_K\setminus S}X(K_v)$ which is endowed with the product topology. When $S=\emptyset$, this set is also denoted by $X(K_{\Omega})$.

\begin{defi}
Let $X$ be a proper $K$-variety. We say that $X$ has \textit{weak weak approximation} if there exists a finite subset $S\subset\Omega_K$ such that the diagonal embedding $X(K)\xhookrightarrow{} X(K_{\Omega}^S)$ has a dense image. We further say that $X$ has \textit{weak approximation} if we can take $S=\emptyset$.
\end{defi}

The \textit{Brauer-Manin pairing} has been introduced by Manin \cite{MR0427322} to study the defect of weak approximation in the setting of number fields (see also \cite[\S1.3]{MR2439198} and \cite[\S13.3.1]{MR4304038}), although the definition is the same over function fields of curves over a finite field. If $X$ is a proper $K$-variety, it is defined as:
\begin{center}
\begin{tikzcd}[row sep=0.5em, /tikz/column 1/.style={column sep=0em}]
\langle\cdot,\cdot\rangle_{BM} : & X(K_\Omega)\times \br(X) \arrow[r] & \mathbf{Q}/\mathbf{Z} \\
& \left((x_v),\alpha\right) \arrow[r,mapsto] & \sum_{v\in\Omega_K} \inv_v\left(x_v^*(\alpha)\right),
\end{tikzcd}
\end{center}
where $\inv_v:\br(K_v)\rightarrow\mathbf{Q}/\mathbf{Z}$ is the local invariant and $x_v^*$ stands for the specialisation morphism of Brauer groups $\br(x_v):\br(X)\rightarrow\br(K_v)$. Elements of $X(K_{\Omega})$ orthogonal to~$\br(X)$ form a closed subset $X(K_{\Omega})^{\br(X)}$ of $X(K_{\Omega})$ containing $X(K)$ and which is called the Brauer-Manin set of $X$.

\begin{defi}
Let $X$ be a proper $K$-variety. We say that \textit{the Brauer-Manin obstruction to weak approximation is the only one on} $X$ if $X(K)$ is dense in~$X(K_{\Omega})^{\br(X)}$. We abbreviate this by saying that $X$ verifies (BM).
\end{defi}

We need to recall how approximation of local points can be translated over function fields and we refer the reader to \cite[Section~$1$]{MR2931861} for further details.

We fix a proper $K$-variety $X$ and a \textit{model} of $X$, that is, a flat proper morphism $\rho:\mathscr{X}\rightarrow C$ whose generic fibre is isomorphic to $X$. Then, if~$v\in\Omega_K$ and $P_v\in X(K_v)$, the valuative criterion of properness ensures that~$P_v$ extends to a unique $\widehat{\mathscr{O}_v}$-morphism $\widehat{P_v}:\spec(\widehat{\mathscr{O}_v})\rightarrow\mathscr{X}\times_C\spec(\widehat{\mathscr{O}_v})$. If $N$ is a positive integer, we set
$$
J_{P_v,N}\coloneqq\{Q\in X(K_v):\text{ the restrictions of $\widehat{Q}$ and $\widehat{P_v}$ to $\spec(\widehat{\mathscr{O}_v}/\mathfrak{m}_v^N)$
 are the same}\}.
$$

\begin{remi}[{\cite[Section~$1$]{MR2931861}}]\label{remiAPPROX}
If $S\subset\Omega_K$ is finite and $(P_v)_{v\in\Omega_K\setminus S}\in X(K_{\Omega}^S)$, then a fundamental system of neighbourhoods of $(P_v)$ is given by the sets $$W_{T,N}\coloneqq\prod_{v\in T}J_{P_v,N}\times\prod_{v\in\Omega_K\setminus (S\cup T)}X(K_v)$$ where $T$ ranges over finite subsets of $\Omega_K\setminus S$ and~$N$ ranges over positive integers. Furthermore, the mapping $P\mapsto \widehat{P}$ is a bijection between $X(K)\cap W_{T,N}$ and sections $:C\rightarrow \mathscr{X}$ of~$\rho$ such that for all $v\in T$, the pullbacks of $j$ and $\widehat{P_v}$ to $\spec(\widehat{\mathscr{O}_v}/\mathfrak{m}_v^N)$ coincide.
\end{remi}

\section{A unirationality criterion}\label{secCRIT}

This section is dedicated to the proof of the following unirationality criterion. To state it, let us recall that if $\varphi:\mathbf{P}^1_k\rightarrow\mathbf{P}^1_k$ is a dominant morphism and $s,t\in\mathbf{P}^1_k$ are such that~$\varphi(t)=s$, then we denote by $e(t/s)$ the \textit{ramification index} of $\varphi$ at $t$.

\begin{thm}\label{uniratcrit}
Let $k$ be a field of characteristic different from $2$ with $\cd(k)\leq1$, and $f:X\rightarrow\mathbf{P}^1_k$ a regular conic bundle . Denote by~$B$ the set of points in~$\mathbf{P}^1_k$ over which the fibre of~$f$ is non-split. Then the following assertions are equivalent:
\begin{enumerate}[label=\arabic*)]
\item The variety $X$ is $k$-unirational.
\item There exists a dominant morphism $\varphi:\mathbf{P}^1_k\rightarrow\mathbf{P}^1_k$ such that for any $s\in B$ and any~$t\in\varphi^{-1}(s)$ one has $2\mid e(t/s)\times[\kappa(t):\kappa(s)]$.
\end{enumerate}
\end{thm}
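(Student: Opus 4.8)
The plan is to combine the Enriques-type criterion of Corollary \ref{enriquesP1} with a residue computation under base change. Write $K=k(\mathbf{P}^1_k)$ for the function field of the target, so that the generic fibre of $f$ is a smooth conic $C$ over $K$ with class $[C]\in\br(K)[2]$; by Proposition \ref{splitresiduemap} the set $B$ is exactly the set of closed points $s$ at which the residue $\partial_s[C]\in\coh^1(\kappa(s),\mathbf{Z}/2\mathbf{Z})$ is nonzero, and I write $a_s$ for a representative of this nonzero class. For a finite morphism $\varphi:\mathbf{P}^1_k\to\mathbf{P}^1_k$ let $K'=k(\mathbf{P}^1_k)$ denote the function field of the \emph{source}, a finite extension of $K$; the pulled-back conic bundle has generic fibre $C\otimes_K K'$, of class $\res_{K'/K}[C]$. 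The criterion of Corollary \ref{enriquesP1} reformulates unirationality as the existence of a finite $\varphi$ for which $C\otimes_K K'$ has a $K'$-point, equivalently (a conic has a rational point exactly when it is split) for which $\res_{K'/K}[C]=0$: indeed a rational section of the pulled-back bundle is the same datum as a $k$-rational curve in $X$ dominating $\mathbf{P}^1_k$, and such a section makes the pulled-back surface rational and dominant over $X$, hence $X$ unirational. So everything reduces to comparing the vanishing of $\res_{K'/K}[C]$ with condition (\ref{unirationalcondition}).

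The key computational input is the behaviour of residues under the extension $K'/K$: at a closed point $t$ of the source lying over $s=\varphi(t)$ one has
\[
\partial_t\big(\res_{K'/K}[C]\big)=e(t/s)\cdot\res_{\kappa(t)/\kappa(s)}\big(\partial_s[C]\big)
\]
in $\coh^1(\kappa(t),\mathbf{Z}/2\mathbf{Z})$. Since $\cd(k)\le1$, every residue field $\kappa(s)$ is a finite extension of $k$ and so also has cohomological dimension at most one; its $2$-primary Brauer group vanishes, and the Faddeev-type residue sequence for $\mathbf{P}^1_k$ shows that $\res_{K'/K}[C]=0$ if and only if \emph{all} the residues above vanish, i.e.\ if and only if $e(t/s)\,\res_{\kappa(t)/\kappa(s)}(a_s)=0$ for every $t$ over every $s\in B$. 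Passing from the generic fibre to an honest regular model is harmless: by Propositions \ref{equweakconicbundle} and \ref{NSbirequ} the non-split locus of the pulled-back bundle depends only on its generic fibre.

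For the implication 1)$\Rightarrow$2) I take the cover $\varphi$ furnished by Corollary \ref{enriquesP1}, so that $e(t/s)\,\res_{\kappa(t)/\kappa(s)}(a_s)=0$ for all relevant $t$. Fix $s\in B$ and $t$ over $s$. If $e(t/s)$ is even there is nothing to check. If $e(t/s)$ is odd then, the class $a_s$ being $2$-torsion, vanishing of the residue forces $\res_{\kappa(t)/\kappa(s)}(a_s)=0$; thus the nonzero class $a_s$ becomes trivial in $\kappa(t)^{\times}/(\kappa(t)^{\times})^2$, so $\sqrt{a_s}\in\kappa(t)$ and hence $\kappa(s)(\sqrt{a_s})\subseteq\kappa(t)$. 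As $a_s$ is not a square in $\kappa(s)$, this gives $2\mid[\kappa(t):\kappa(s)]$. In either case $2\mid e(t/s)\times[\kappa(t):\kappa(s)]$, which is condition (\ref{unirationalcondition}).

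The converse 2)$\Rightarrow$1) is the substantial direction, and is where I expect the real work. Starting from a cover $\varphi$ satisfying condition (\ref{unirationalcondition}), the pulled-back bundle need not be split: at a point $t$ over $s\in B$ with $e(t/s)$ odd the residue $a'_t:=\res_{\kappa(t)/\kappa(s)}(a_s)$ may be nonzero. What condition (\ref{unirationalcondition}) buys is that $2\mid[\kappa(t):\kappa(s)]$ there, whence
\[
\cores_{\kappa(t)/\kappa(s)}(a'_t)=[\kappa(t):\kappa(s)]\cdot a_s=0,
\]
so that $\Norm_{\kappa(t)/k}(a'_t)$ is a square in $k$. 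The plan is to exploit this vanishing of corestriction to build a further cover $\psi:\mathbf{P}^1_k\to\mathbf{P}^1_k$ of the source such that, over each bad point $t$, the fibre of $\psi$ either is ramified or contains a point whose residue field contains $\kappa(t)(\sqrt{a'_t})=\kappa(t)(\sqrt{a_s})$; the residues of $\res_{(\varphi\circ\psi)}[C]$ then vanish everywhere, $\varphi\circ\psi$ is a splitting cover, and Corollary \ref{enriquesP1} yields unirationality. Producing $\psi$ amounts to prescribing finitely many local square classes along a morphism $\mathbf{P}^1_k\to\mathbf{P}^1_k$ while keeping its source of genus zero, and the vanishing-corestriction condition is exactly what renders this solvable, via the residue (Faddeev) exact sequence for $\coh^1(k(\mathbf{P}^1_k),\mathbf{Z}/2\mathbf{Z})$. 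I expect this to be the main obstacle: the genus-zero constraint forbids simply doubling the ramification over high-degree points, so one must instead interpolate the required square classes, using that each $a'_t$ has trivial norm to $k$ to control the global divisor of the function defining $\psi$.
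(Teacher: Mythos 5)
Your overall strategy (Enriques' criterion via Corollary \ref{enriquesP1} plus the residue formula under base change) is the same as the paper's, and your proof of 1)$\Rightarrow$2) is correct: in the odd-ramification case the vanishing of $e(t/s)\,\res_{\kappa(t)/\kappa(s)}(a_s)$ forces $\kappa(s)(\sqrt{a_s})\subseteq\kappa(t)$, hence $2\mid[\kappa(t):\kappa(s)]$. However, your treatment of 2)$\Rightarrow$1) has a genuine gap, and in fact sets off on an unnecessary detour that you then do not complete. The missing observation is that the hypothesis $\cd(k)\leq1$ makes the corestriction $\cores_{\kappa(t)/\kappa(s)}:\coh^1(\kappa(t),\mathbf{Z}/2\mathbf{Z})\rightarrow\coh^1(\kappa(s),\mathbf{Z}/2\mathbf{Z})$ an \emph{isomorphism} (the paper invokes \cite[Proposition~3.3.11]{MR2392026}). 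In particular it is injective, so your computation $\cores_{\kappa(t)/\kappa(s)}(a'_t)=[\kappa(t):\kappa(s)]\cdot a_s=0$ already forces $a'_t=0$: you stop one step short. Equivalently, one can write $\res_{\kappa(s)/\kappa(t)}=[\kappa(t):\kappa(s)]\cdot\cores_{\kappa(t)/\kappa(s)}^{-1}$, so that the residue formula becomes
\[
\partial_t\bigl(\res_{K'/K}[C]\bigr)=e(t/s)\times[\kappa(t):\kappa(s)]\times\cores_{\kappa(t)/\kappa(s)}^{-1}\bigl(\partial_s[C]\bigr),
\]
which vanishes outright whenever $2\mid e(t/s)\times[\kappa(t):\kappa(s)]$, since the target has exponent $2$. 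Thus the cover $\varphi$ given by assertion 2) is already a splitting cover and Corollary \ref{enriquesP1} concludes immediately.

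Because you miss this, you assert that a point $t$ with $e(t/s)$ odd and $[\kappa(t):\kappa(s)]$ even "may" carry a nonzero residue $a'_t$ (it cannot, for the reason above), and you then propose to repair this by constructing a further cover $\psi$ interpolating prescribed square classes over the bad points. That construction is only sketched ("the plan is\dots", "I expect this to be the main obstacle") and is not carried out, so as written the implication 2)$\Rightarrow$1) is not proved. Note also that without the injectivity of corestriction the vanishing of $\cores_{\kappa(t)/\kappa(s)}(a'_t)$ is genuinely weaker than the vanishing of $a'_t$ (over a general field an even-degree extension need not kill a given square class), so the cohomological-dimension hypothesis is doing real work here and must be invoked; once it is, the auxiliary cover $\psi$ is superfluous.
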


\subsection{A criterion of Enriques}\label{subsecENRIQUES}

Before proving Theorem \ref{uniratcrit}, let us state Enriques criterion for the unirationality of conic bundles \cite[Proposition 10.1.1]{MR1668575}.

\begin{prop}\label{enriques}
Let $k$ be a field and $f:X\rightarrow S$ a conic bundle between $k$-varieties. Then the following assertions are equivalent:
\begin{enumerate}[label=(\roman*)]
\item The variety $X$ is unirational.
\item There exists a rational map $g:\mathbf{P}^{\dim S}_k\dashrightarrow X$ such that $f\circ g$ is dominant.
\item There exists a dominant map $h:\mathbf{P}^{\dim S}_k\dashrightarrow S$ such that the base change of $f$ by $h$ has a rational section.
\end{enumerate}
\end{prop}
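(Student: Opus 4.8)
The plan is to prove the cycle of implications $(iii)\Rightarrow(i)$, $(i)\Rightarrow(ii)$ and $(ii)\Rightarrow(iii)$, after first disposing of the purely formal equivalence between $(ii)$ and $(iii)$. Write $d\coloneqq\dim S$, so that $\dim X=d+1$ since the fibres of $f$ are curves. Given $g$ as in $(ii)$, set $h\coloneqq f\circ g$; then $h$ is dominant and the pair $(g,\mathrm{id}_{\mathbf{P}^d_k})$ defines a rational section of the projection $X\times_S\mathbf{P}^d_k\to\mathbf{P}^d_k$ (the fibre product being formed along $h$), which is exactly $(iii)$. Conversely a rational section as in $(iii)$, composed with the first projection to $X$, yields a rational map $g\colon\mathbf{P}^d_k\dashrightarrow X$ with $f\circ g=h$ dominant, which is $(ii)$. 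So I would record $(ii)\Leftrightarrow(iii)$ at the outset and then only relate these to unirationality.

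For $(iii)\Rightarrow(i)$, which is the substantive half of the ``if'' direction, I would exploit that a smooth conic acquires a rational point exactly when it becomes a projective line. Let $h\colon\mathbf{P}^d_k\dashrightarrow S$ be dominant such that $Y\coloneqq X\times_S\mathbf{P}^d_k\to\mathbf{P}^d_k$ has a rational section. Its generic fibre is the base change of the generic conic $X_\eta$ (a smooth conic over $k(S)$) to $M\coloneqq k(\mathbf{P}^d_k)$, a finite extension of $k(S)$ via $h$; the section endows this conic over $M$ with an $M$-point, so it is $M$-isomorphic to $\mathbf{P}^1_M$. Hence $Y$ is birational to $\mathbf{P}^1_M$, and therefore to $\mathbf{P}^1_k\times_k\mathbf{P}^d_k$, i.e.\ $Y$ is $k$-rational of dimension $d+1$. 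Since $h$ is dominant, the first projection $Y\to X$ is dominant; composing a birational map $\mathbf{P}^{d+1}_k\dashrightarrow Y$ with $Y\to X$ produces a dominant rational map $\mathbf{P}^{d+1}_k\dashrightarrow X$, proving $X$ unirational.

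For the converse $(i)\Rightarrow(ii)$ I would start from a dominant $\psi\colon\mathbf{P}^{d+1}_k\dashrightarrow X$ and consider $\Phi\coloneqq f\circ\psi\colon\mathbf{P}^{d+1}_k\dashrightarrow S$, which is dominant with one-dimensional generic fibre. The idea is to cut the source down by one dimension: restrict $\psi$ to a general hyperplane $\Lambda\cong\mathbf{P}^d_k$ and set $g\coloneqq\psi|_\Lambda$, so that one only needs $\Phi|_\Lambda$ to remain dominant onto $S$. I expect this last point to be the main obstacle: that a general hyperplane section of a dominant rational map stays dominant is a Bertini-type genericity statement, straightforward over an infinite field (the locus of ``bad'' hyperplanes is a proper closed subset of the dual projective space, whose dense open complement carries $k$-points), but requiring care over finite fields, where one must ensure a suitable $\Lambda$ is defined over $k$ — for instance by working in a pencil and passing to a sufficiently general member, or by invoking a Bertini statement valid over finite fields. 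Granting such a $\Lambda$, the restriction $g=\psi|_\Lambda$ satisfies $f\circ g=\Phi|_\Lambda$ dominant, which is precisely $(ii)$, and the cycle of implications closes.
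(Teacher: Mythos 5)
Your handling of (ii)$\Leftrightarrow$(iii) and of (iii)$\Rightarrow$(i) is correct and coincides with the argument of the reference that the paper cites for those implications: the rational section trivialises the generic fibre of the base-changed bundle, so $X\times_S\mathbf{P}^{\dim S}_k$ is $k$-birational to $\mathbf{P}^{\dim S+1}_k$ and dominates $X$.

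The genuine gap is in (i)$\Rightarrow$(ii), which is exactly the implication the paper reproves because the classical argument implicitly assumes the base field infinite. There are two problems with your sketch. First, unirationality provides a dominant $\psi:\mathbf{P}^r_k\dashrightarrow X$ for some $r$ that need not equal $\dim S+1$, so one hyperplane cut is not enough; the cutting step must be iterated $r-\dim S$ times (minor, since your step can be repeated). Second, and decisively, the cutting step itself: you need a hyperplane $\Lambda$ \emph{defined over $k$} on which $f\circ\psi$ stays dominant, and, as you note, over a finite field the dense open locus of good hyperplanes in the dual projective space may have no $k$-point. Neither of your proposed fixes closes this: a ``sufficiently general member of a pencil'' is again a $k$-point of $\mathbf{P}^1$ avoiding a proper closed subset, which may fail to exist; and Bertini theorems valid over finite fields produce smooth hypersurfaces of higher degree, which are not isomorphic to $\mathbf{P}^{\dim S}_k$, so restricting $\psi$ to one of them does not produce the rational map from projective space that (ii) demands. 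The paper resolves this by invoking the proof of \cite[Lemma~2.3]{MR1956057}, which over an arbitrary field produces a \emph{rational} hypersurface $Z\hookrightarrow\mathbf{P}^d_k$ (of possibly higher degree, but birational to $\mathbf{P}^{d-1}_k$) on which the composite remains dominant; precomposing with a birational map $\mathbf{P}^{d-1}_k\dashrightarrow Z$ and iterating then yields (ii). Without such a device your argument proves the proposition only over infinite fields, which is insufficient for the paper's intended application to finite and quasi-finite fields.
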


%Since the proof of Proposition \ref{enriques} in \cite[Proposition 10.1.1]{MR1668575} is implicitly given for an infinite field, we ought to supply a proof for the general case.

\begin{proof}[Proof of Proposition \ref{enriques}]
The proof of (ii)$\Rightarrow$(iii)$\Rightarrow$(i) is given in \cite[Proof of Proposition~10.1.1]{MR1668575}.

Since the proof of (i)$\Rightarrow$(ii) given in \textit{ibidem} works implicitly for an infinite field, let us give a general proof. Let us assume (i), that is $X$ is unirational, and let us prove~(ii). First choose a rational dominant map $\psi:\mathbf{P}^r\dashrightarrow X$. Note that the proof of \cite[Lemma~2.3]{MR1956057} ensures that, given a rational map $\varphi:\mathbf{P}^d_k\dashrightarrow X$ with $f\circ \varphi$ dominant, if~$d>\dim S$ then there exists a rational hypersurface $\iota:Z\xhookrightarrow{}\mathbf{P}^d_k$ such that $f\circ\varphi\circ\iota$ is dominant, that is, there exists a rational map $\theta:\mathbf{P}^{d-1}_k\dashrightarrow\mathbf{P}^d_k$ such that~$f\circ\varphi\circ\theta$ is dominant. When starting with $\psi$, we may now apply this  procedure iteratively, which proves~(ii).
%
%Let us now prove that (ii) implies (iii). For this purpose, choose a rational map $g:\mathbf{P}^{\dim S}_k\dashrightarrow X$ with $f\circ g$ dominant and set $h=f\circ g$. Denote by $U$ an integral open subset of $\mathbf{P}^{\dim S}_k$ where $g$ is defined. Let us now consider the following cartesian diagrams
%\begin{center}
%\begin{tikzcd}[row sep=1cm]
%V \arrow[r]\arrow[d] & X\times_SX \arrow[d,"\pi_1"]\arrow[r] & X\arrow[d,"f"]\\
%U \arrow[r,"\left.g\right|_U"] & X \arrow[r,"f"]& S
%\end{tikzcd}
%\end{center}
%where $\pi_1$ is the first projection morphism and $V$ the fibre product of~$\pi_1$ and~$\left.g\right|_U$. Since $f$ is a conic bundle, its base change $V\rightarrow U$ is also a conic bundle. Moreover, the diagonal morphism associated to $f$ defines a section of $\pi_1$, so that the conic bundle $V\rightarrow U$ has a section, which proves (ii).
%
%We now show that (iii) implies (i). Let us choose $h$ as in the statement of~(iii) and set $U$ an integral open subset of $\mathbf{P}^{\dim S}_k$ where $h$ is defined. Denote by $c:V\rightarrow U$ the base change of $f$ by $\left.h\right|_U$. By assumption, $c$ has a rational section. In particular, the generic fibre of~$c$ has a rational point. Since it is a Severi-Brauer variety, it is rational, that is $k(V)$ is purely transcendental over $k(U)=k(\mathbf{P}^{\dim S}_k)$, hence over $k$. This proves that $V$ is $k$-unirational. But~$h$ being dominant, its base change $V\rightarrow X$ by $f$ is dominant. This implies that~$X$ is also $k$-unirational, which proves (i).
\end{proof}

\begin{rem}\label{remenriques}
Note that if $S$ is a projective curve, then the rational maps in (ii) and~(iii) of Proposition \ref{enriques} may be assumed to be morphisms by the valuative criterion of properness.
\end{rem}

When the base field has cohomological dimension at most one, we infer the following unirationality criterion for conic bundles.

\begin{cor}\label{enriquesP1}
Let $k$ be a field of characteristic different from $2$ such that $\cd(k)\leq1$. If $f:X\rightarrow\mathbf{P}^1_k$ is a regular conic bundle, then $X$ is unirational if and only if there exists a dominant morphism $\varphi:\mathbf{P}^1_k\rightarrow\mathbf{P}^1_k$ such that the closed fibres of all regular conic bundles equivalent to the base change of $f$ by~$\varphi$ are split.
\end{cor}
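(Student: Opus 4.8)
The plan is to deduce Corollary~\ref{enriquesP1} from Enriques' criterion (Proposition~\ref{enriques}, via Remark~\ref{remenriques}) by translating condition~(iii) into a statement about splitness of fibres. Since $\cd(k)\leq 1$ and $S=\mathbf{P}^1_k$ is a projective curve, I would first apply Remark~\ref{remenriques} to replace the rational map $h$ in~(iii) by a genuine dominant morphism $\varphi:\mathbf{P}^1_k\rightarrow\mathbf{P}^1_k$. Thus $X$ is unirational if and only if there exists such a $\varphi$ for which the base change $f_\varphi:X\times_{\mathbf{P}^1_k,\varphi}\mathbf{P}^1_k\rightarrow\mathbf{P}^1_k$ admits a rational section. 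The core of the argument is therefore to show:
\begin{equation*}
f_\varphi \text{ has a rational section} \iff \text{every regular conic bundle equivalent to } f_\varphi \text{ has only split closed fibres.}
\end{equation*}

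For the forward direction, suppose $f_\varphi$ has a rational section. A rational section is defined over a dense open $U\subset\mathbf{P}^1_k$, giving a $\kappa(t)$-point of the fibre $(f_\varphi)_t$ for the generic $t\in U$; more to the point, the generic fibre of $f_\varghi$ over $k(\mathbf{P}^1)$ is a smooth conic with a rational point, hence split. By Proposition~\ref{equweakconicbundle}, $f_\varphi$ is equivalent to a regular conic bundle $g$ with integral fibres, and by Proposition~\ref{NSbirequ} any two equivalent regular conic bundles have their non-split fibres over the same points; so it suffices to show $g$ has no non-split closed fibres. Here I would use the residue description: the key point is that having a rational section forces the generic fibre's class in $\br(k(\mathbf{P}^1))[2]$ to be trivial, and by Proposition~\ref{splitresiduemap} the splitness of each closed fibre of a regular model is governed precisely by the residue $r([C])$ of that class at the corresponding place. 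Triviality of the Brauer class makes all residues vanish, forcing every closed fibre of the regular model to be split. (One must be slightly careful that a rational section need not pass through the special fibre, but splitness is intrinsic to the fibre, detected by the residue, so this causes no trouble.)

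For the reverse direction, suppose every regular conic bundle equivalent to $f_\varphi$ has only split closed fibres. Choosing such a regular model $g:Y\rightarrow\mathbf{P}^1_k$ with integral fibres via Proposition~\ref{equweakconicbundle}, I want to produce a rational section of $g$, which then yields a rational section of $f_\varphi$ since they agree over a dense open. Over a field with $\cd(k)\leq 1$ the function field $k(\mathbf{P}^1)$ has cohomological dimension at most $2$, but that alone does not kill the $2$-torsion Brauer class of the generic conic. Instead, the hypothesis that \emph{all} closed fibres are split means all residues of the generic class vanish; by Faddeev's exact sequence (or the reciprocity description of $\br(k(\mathbf{P}^1))[2]$ in terms of residues over $\cd(k)\leq 1$ base), a class with no residues descends to $\br(k)[2]$, which vanishes since $\cd(k)\leq 1$. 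Hence the generic fibre is a conic with trivial Brauer class over $k(\mathbf{P}^1)$, so it has a $k(\mathbf{P}^1)$-point, i.e.\ $g$ — and therefore $f_\varphi$ — has a rational section.

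The main obstacle I anticipate is the precise residue-theoretic bookkeeping in both directions: correctly identifying splitness of each closed fibre with the vanishing of the corresponding residue (invoking Proposition~\ref{splitresiduemap} uniformly across all places of $\mathbf{P}^1_k$, including those of higher degree and those where $\varphi$ ramifies), and then invoking the Faddeev/Bloch--Ogus type exact sequence to conclude that a generic Brauer class with all residues zero comes from $\br(k)[2]=0$. A subtle point is that the statement ranges over \emph{all} regular conic bundles equivalent to $f_\varphi$; Proposition~\ref{NSbirequ} guarantees that the non-split locus is an invariant of the equivalence class, so it is enough to verify the condition on one well-chosen regular model with integral fibres, which streamlines the argument considerably.
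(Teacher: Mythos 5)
Your proposal is correct and follows essentially the same route as the paper: reduce via Remark~\ref{remenriques} and Proposition~\ref{enriques}(iii) to the existence of a section of the base change, then identify ``has a (rational) section'' with ``all closed fibres of every equivalent regular model are split'' using the Faddeev-type exact sequence for $\br(k(\mathbf{P}^1))[2]$, the vanishing $\br(\mathbf{P}^1_k)[2]=\br(k)[2]=0$, and the residue criterion of Proposition~\ref{splitresiduemap}. The only difference is organisational: the paper packages this last equivalence as Lemma~\ref{sectionsplit} (together with Remark~\ref{rem2torbr}(a)), whereas you inline its proof.
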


The proof of Corollary \ref{enriquesP1} requires the following characterisation of regular conic bundles over $\mathbf{P}^1_k$ having a section.

\begin{lem}\label{sectionsplit}
Let $k$ be a field of characteristic different from $2$, with $\br(k)\left[2\right]=0$. If $f:X\rightarrow\mathbf{P}^1_k$ is a regular conic bundle, then~$f$ has a section if and only if all fibres of $f$ over a closed point of $\mathbf{P}^1_k$ are split.
\end{lem}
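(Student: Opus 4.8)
The plan is to prove both implications directly, using the structure of conic bundles together with the hypothesis $\br(k)[2]=0$ to control the Brauer class of the generic fibre.

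\medskip

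\emph{The easy direction} $(\Leftarrow)$. First I would treat the implication that if all closed fibres of $f$ are split, then $f$ admits a section. Suppose every fibre over a closed point of $\mathbf{P}^1_k$ is split. A split conic over a field always has a rational point (a split conic is either geometrically irreducible, in which case it is a smooth conic with trivial residue obstruction, or a union of two lines, where the singular point is rational). Working at the generic point, the generic fibre $X_\eta$ is a smooth conic over the function field $K=k(\mathbf{P}^1_k)=k(t)$. Since $\cd(k)\leq 1$ is not assumed here, but $\br(k)[2]=0$ is, I would argue that the class $[X_\eta]\in\br(K)[2]$ is everywhere unramified: by Proposition~\ref{splitresiduemap}, the residue of $[X_\eta]$ at a closed point $s\in\mathbf{P}^1_k$ detects a minimal splitting field of the special fibre, and splitness of that fibre forces the residue to vanish. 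Thus $[X_\eta]$ lies in the unramified Brauer group of $K$ along $\mathbf{P}^1_k$. The key step is then to invoke the Faddeev/Auslander--Brumer exact sequence for $\br(k(t))$, whose unramified part (classes with trivial residues at all closed points, including $\infty$) is exactly $\br(k)$; since we are in the $2$-torsion and $\br(k)[2]=0$, we get $[X_\eta]=0$, i.e.\ $X_\eta$ is split over $K$, hence has a $K$-point, which extends to a section of $f$ by properness and the valuative criterion (Remark~\ref{remenriques}).

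\medskip

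\emph{The converse} $(\Rightarrow)$. Now suppose $f$ has a section $\sigma:\mathbf{P}^1_k\to X$. I would show every closed fibre is split. The section meets each fibre $X_s$ in a $\kappa(s)$-rational point. A conic with a rational point over its base field is necessarily split: a smooth conic with a rational point is isomorphic to $\mathbf{P}^1$ (geometrically irreducible, hence split), and a singular conic with a rational point cannot be the ``non-split'' case (a singular irreducible conic, i.e.\ a non-split conic in the terminology recalled before Definition~\ref{CONIC}, has no smooth rational point and its unique singular point is non-rational). More carefully, since $f$ is a regular conic bundle, each fibre $X_s$ is one of: a smooth conic, a pair of lines, or a double line; in all cases the existence of a $\kappa(s)$-point rules out the non-split singular conic, so $X_s$ is split. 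Hence all closed fibres are split.

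\medskip

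\emph{Anticipated obstacle.} The main subtlety lies in the easy direction, specifically in passing from ``all \emph{closed} fibres are split'' to ``the generic fibre is split over $K$''. Vanishing of all residues gives that $[X_\eta]$ is unramified on $\mathbf{P}^1_k$, but I must also handle the fibre at infinity and correctly identify the unramified $2$-torsion Brauer group of $k(t)$ with $\br(k)[2]$; this is where the hypothesis $\br(k)[2]=0$ is indispensable and where I would cite the Faddeev exact sequence carefully, making sure the residue maps there agree (up to the identifications of Section~\ref{subsecCB}) with the map $r$ from~(\ref{residuemap}) and with the splitness criterion of Proposition~\ref{splitresiduemap}. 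A secondary point requiring care is the precise classification of conics needed in the converse: I want to ensure that ``has a rational point'' genuinely forces splitness for \emph{every} degeneration type allowed by a regular conic bundle, which follows from the geometric description of conics recalled in the excerpt.
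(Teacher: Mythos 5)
Your forward direction ($\Leftarrow$) is essentially the paper's proof: residues of $[X_\eta]$ vanish at all closed points by Proposition~\ref{splitresiduemap}, the Faddeev-type sequence over $\mathbf{P}^1_k$ identifies the everywhere-unramified $2$-torsion with $\br(\mathbf{P}^1_k)[2]=\br(k)[2]=0$, and the resulting $K$-point of $X_\eta$ spreads out to a section by properness. That part is fine, and your worry about the point at infinity is correctly resolved since the sum runs over all closed points of $\mathbf{P}^1_k$.

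The converse ($\Rightarrow$) contains a genuine error. You assert that a non-split conic (singular and irreducible, i.e.\ geometrically a pair of conjugate lines) has no rational point because ``its unique singular point is non-rational.'' This is false: the intersection point of the two conjugate lines is Galois-stable, hence \emph{is} a $\kappa(s)$-rational point. (Concretely, $x^2+y^2=0$ over $\mathbf{R}$ is non-split yet contains the real point $[0:0:1]$.) Consequently ``the fibre has a rational point'' does not imply ``the fibre is split,'' and your argument breaks: the section could a priori pass through the node of a non-split fibre. To repair it you must use the regularity of $X$, which your $(\Rightarrow)$ argument never invokes: a section of a regular fibred surface over a Dedekind scheme factors through the smooth locus of the morphism (at the image point $x$ of the section, the kernel of $\mathscr{O}_{X,x}\twoheadrightarrow\mathscr{O}_{\mathbf{P}^1_k,s}$ is a principal height-one prime, so the local equation of the fibre is not in $\mathfrak{m}_x^2$ and $x$ is a smooth rational point of $X_s$), and a non-split conic has no \emph{smooth} rational point. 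Alternatively, and this is what the paper does, run the Brauer-class argument as a chain of equivalences: $f$ has a section iff $[X_\eta]=0$ iff all residues vanish iff (by Proposition~\ref{splitresiduemap}, which is where regularity enters) all closed fibres split; both implications then come for free and no separate geometric argument is needed.
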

\begin{proof}
If we write the short exact of \cite[Theorem~6.9.1]{MR2266528} with $i=m=2$ and $j=1$, we get the following short exact sequence
\begin{equation}\label{puritybrauerP1}
\begin{tikzcd}
0\arrow[r] & \br(\mathbf{P}^1_k)[2]\arrow[r] & \br(k(\mathbf{P}^1_k))[2] \arrow[r,"\oplus r_P"] & \displaystyle\bigoplus_{P\in(\mathbf{P}^1_k)^{(1)}}\coh^1(\kappa(P),\mathbf{Z}/2\mathbf{Z})
\end{tikzcd}
\end{equation}
where for each $P\in(\mathbf{P}^1_k)^{(1)}$, the map~$r_P$ denotes the residue map (\ref{residuemap}) associated to the discrete valuation ring~$\mathscr{O}_{\mathbf{P}^1_k,P}$. Besides,~$\br({\mathbf{P}^1_k})\left[2\right]=\br(k)\left[2\right]=0$ where the first equality is derived from \cite[Corollary 2.3.9]{MR1845760} and the second one from our assumption on $k$. Thus, if $X_\eta$ denotes the generic fibre of~$f$, then~$f$ has a section if and only if the smooth conic $X_\eta$ has a rational point, that is, if and only if~$[X_\eta]=0\in\br(k(\mathbf{P}^1_k))[2]$. By the short exact sequence (\ref{puritybrauerP1}), $[X_\eta]=0$ if and only if for any closed point $P$ of $\mathbf{P}^1_k$, $r_P([X_\eta])=0$. Furthermore, Proposition \ref{splitresiduemap} applied to the regular integral scheme $X\times_{\mathbf{P}^1_k}\spec(\mathscr{O}_{\mathbf{P}^1_k,P})$ ensures that $r_P([X_\eta])=0$ if and only if~$X_P$ splits. This proves that $f$ has a section if and only if all its closed fibres are split.
\end{proof}
\begin{rem}\label{rem2torbr}
Let $k$ be a field of characteristic different from $2$. Then $\br(k)\left[2\right]=0$ if $k$ verifies one of the following conditions:
\begin{enumerate}[label=(\alph*)]
\item the cohomological dimension of $k$ is at most one;
\item the absolute Galois group of $k$ is a $p$-group for some prime number $p\neq2$.
\end{enumerate}
Indeed, if $k$ satisfies (a), this comes from the isomorphism $\br(k)\left[2\right]=\coh^2(k,\mathbf{Z}/2\mathbf{Z})$ induced by Kummer's exact sequence. If $k$ satisfies (b), all central simple algebras $A$ over $k$ are split by a finite separable extension of $k$, whose degree is by assumption a power of the odd prime $p$, that is, $[A]\not\in\br(k)\{2\}\setminus\{0\}$.
\end{rem}

We may now give a proof of Corollary \ref{enriquesP1}:
\begin{proof}[Proof of Corollary \ref{enriquesP1}]
By Remark \ref{remenriques} applied to (iii) of Proposition \ref{enriques}, $X$ is unirational if and only if there exists a dominant map $\varphi:\mathbf{P}^1_k\rightarrow\mathbf{P}^1_k$ such that the conic bundle $g:X\times_{\mathbf{P}^1_k,\varphi}\mathbf{P}^1_k\rightarrow\mathbf{P}^1_k$, defined as the base change of $f$ by $\varphi$, has a section. By Proposition~\ref{equweakconicbundle}, all conic bundles are equivalent to a regular conic bundle. In particular, $g$ has a section if and only if all regular conic bundles over $\mathbf{P}^1_k$ equivalent to $g$ have a section. Now, using Lemma~\ref{sectionsplit} and (a) of Remark \ref{rem2torbr}, the latter is equivalent to saying that the closed fibres of all regular conic bundle over $\mathbf{P}^1_k$ equivalent to $g$ are split, which proves the statement.
\end{proof}

\subsection{Proof of Theorem \ref{uniratcrit}}\label{subsecUNIRATCRIT}

Before we give a proof of Theorem \ref{uniratcrit}, we recall that when $R\subset S$ is an inclusion of discrete valuation rings, with respective fraction fields $K\subset L$ and residue fields $\kappa\subset\lambda$ of characteristic different from $2$, we denote by $\Res_{K/L}:\br(K)[2]\rightarrow\br(L)[2]$ the restriction morphism defined contravariantly from the morphisms $\spec(L)\rightarrow\spec(K)$. If $R$ contains a field and $e$ is the ramification index of $S/R$, the following diagram is commutative \cite[Proposition 1.4.7]{MR4304038}:
\begin{equation}\label{residuesquare}
\begin{tikzcd}
\br(K)[2]\arrow[r, "r_K"]\arrow[d, "\Res_{K/L}"] & \coh^1(\kappa,\mathbf{Z}/2\mathbf{Z}) \arrow[d, "e\times\res_{\kappa/\lambda}"] \\
\br(L)[2]\arrow[r, "r_L"] & \coh^1(\lambda,\mathbf{Z}/2\mathbf{Z})
\end{tikzcd}
\end{equation}
where the horizontal maps are the residue maps (\ref{residuemap}) associated to $R$ and $S$.

\begin{proof}[Proof of Theorem \ref{uniratcrit}]
Before starting the proof, using Propositions~\ref{equweakconicbundle} and \ref{NSbirequ}, we may assume that all non-split fibres of $f$ are integral. Thus, non-split fibres of $f$ coincide with its singular fibres.

Let us now make a remark that is useful all along the proof. If $f:X\rightarrow\mathbf{P}^1_k$ is a conic bundle and $\varphi:\mathbf{P}^1_k\rightarrow\mathbf{P}^1_k$ is dominant, denote by $f':X'\rightarrow\mathbf{P}^1_k$ the base change of~$f$ by~$\varphi$ (so that $X'$ may not be regular). For any closed point $t$ of $\mathbf{P}^1_k$, if we set~$s=\varphi(t)$ and if we denote by~$r_t$ (resp. $r_s$) the residue map (\ref{residuemap})\ at $t$ (resp. at $s$) and $X_\eta$ (resp.~$X'_\eta$) the generic fibre of~$f$ (resp.~$f'$) then if we apply the commutative diagram (\ref{residuesquare}) to the inclusion of discrete valuation rings~$\mathscr{O}_{\mathbf{P}^1_k,s}\subset \mathscr{O}_{\mathbf{P}^1_k,t}$ induced by $f$, we get
\begin{equation}\label{protoresiduebasechange}
r_t(X'_\eta)=e(t/s)\times\res_{\kappa(s)/\kappa(t)}(r_s(X_\eta)).
\end{equation}
As $\cd(k)\leq1$, the map $\cores_{\kappa(t)/\kappa(s)}$ is an isomorphism by \cite[Proposition~3.3.11]{MR2392026}. Moreover, $\cores_{\kappa(t)/\kappa(s)}\circ\res_{\kappa(s)/\kappa(t)}=[\kappa(t):\kappa(s)]$ \cite[Proposition 4.2.10]{MR2266528}, so that~(\ref{protoresiduebasechange}) may be rewritten as
\begin{equation}\label{residuebasechange}
r_t(X'_\eta)=e(t/s)\times[\kappa(t):\kappa(s)]\times \cores_{\kappa(t)/\kappa(s)}^{-1}\left(r_s(X_\eta)\right)\in\coh^1(\kappa(t),\mathbf{Z}/2\mathbf{Z}).
\end{equation}
Now, if $f'':X''\rightarrow\mathbf{P}^1_k$ is a regular conic bundle equivalent to $f'$, since its generic fibre $X''_{\eta}$ is isomorphic to $X'_{\eta}$, we may rewrite (\ref{residuebasechange}) as
\begin{equation}\label{regresiduebasechange}
r_t(X''_\eta)=e(t/s)\times[\kappa(t):\kappa(s)]\times \cores_{\kappa(t)/\kappa(s)}^{-1}\left(r_s(X_\eta)\right)\in\coh^1(\kappa(t),\mathbf{Z}/2\mathbf{Z}).
\end{equation}

Let us now assume 1), that is $X$ is $k$-unirational. By Corollary \ref{enriquesP1}, there exists a dominant map $\varphi:\mathbf{P}^1_k\rightarrow\mathbf{P}^1_k$ such that if $f'':X''\rightarrow\mathbf{P}^1_k$ is a regular conic bundle equivalent to the base change $f':X'\rightarrow\mathbf{P}^1_k$ of $f$ by $\varphi$, then all closed fibres of $f''$ are split. For $s\in B$ and $t\in\varphi^{-1}(s)$, since~$X''_t$ splits and~$X_s$ is non-split, Proposition \ref{splitresiduemap} ensures that~$r_t(X''_\eta)=0$ and $r_s(X_\eta)\neq0$. As~$\coh^1(\kappa(t),\mathbf{Z}/2\mathbf{Z})=\kappa(t)^{\times}/(\kappa(t)^{\times})^2$ is a group of exponent~$2$, by (\ref{regresiduebasechange}) we deduce that $2\mid e(t/s)\times[\kappa(t):\kappa(s)]$, which proves 2).

Suppose that 2) holds true and let $f':X'\rightarrow\mathbf{P}^1_k$ be the base change of~$f$ by~$\varphi$ and $f'':X''\rightarrow\mathbf{P}^1_k$ a regular conic bundle such that $f'$ and $f''$ are isomorphic over $\mathbf{P}^1_k\setminus B$. In the beginning of the proof, $X$ has been chosen such that $f$ is smooth above $\mathbf{P}^1_k\setminus B$, so that the morphism $f'$ is also smooth above~$\mathbf{P}^1_k\setminus\varphi^{-1}(B)$. In particular, the fibres of $f''$ above~$U$ are smooth, hence split. Furthermore, for $s\in B$ and $t\in\varphi^{-1}(s)$, the assumption on $\varphi$ and equation (\ref{regresiduebasechange}) ensure that $r_t(X''_{\eta})=0$, as $\coh^1(\kappa(t),\mathbf{Z}/2\mathbf{Z})$ is a group of exponent~$2$. By Proposition \ref{splitresiduemap}, the fibre $X''_t$ is then split. This shows that any closed fibre of $f''$ is split, so that $X$ is unirational by Corollary \ref{enriquesP1}, which proves 1).
\end{proof}

\begin{rem}\label{coverkillresidues}
By Lemma \ref{sectionsplit}, the last paragraph of the proof ensures that if~$\varphi$ verifies the condition of 2), then the base change of $f$ by $\varphi$ has a section.
\end{rem}

%As an immediate corollary, we get the following statement which we will need for proving Theorem \ref{uniratcrit}.
%
%\begin{cor}
%Let $k$ be a field, $f:X\rightarrow\mathbf{P}^1_k$ a projective conic bundle where $X$ is integral and the generic fibre of $f$ is smooth. If there exists a morphism $g:\mathbf{P}^1_k\rightarrow X$ such that $g\circ f$ is dominant, then $X$ is $k$-unirational.
%\end{cor}

\subsection{A criterion for triviality of $R$-equivalence}\label{subsecREQUIVCRIT}

\begin{cor}\label{Requicrit}
Let $k$ be a field of characteristic different from $2$ with $\cd(k)\leq1$, and $f:X\rightarrow\mathbf{P}^1_k$ a regular conic bundle. Denote by $B$ the set of points of $\mathbf{P}^1_k$ over which the fibre of $f$ is non-split. Assume that for all $s_0,s_1\in\mathbf{P}^1(k)$ there exists a dominant morphism $\varphi:\mathbf{P}^1_k\rightarrow\mathbf{P}^1_k$ verifying:
\begin{enumerate}[label=(\alph*)]
\item for any $s\in B$ and any $t\in\varphi^{-1}(s)$ one has $2\mid e(t/s)\times[\kappa(t):\kappa(s)]$;
\item the fibres $\varphi^{-1}(s_0)$ and $\varphi^{-1}(s_1)$ have a rational point.
\end{enumerate}
Then $X$ is $k$-unirational and $X(k)/R$ is trivial.
\end{cor}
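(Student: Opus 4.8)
The plan is to deduce this corollary from the unirationality criterion of Theorem~\ref{uniratcrit} together with a $R$-equivalence analogue of the Enriques construction. First observe that unirationality of $X$ is immediate: assumption~(a) is exactly condition~2) of Theorem~\ref{uniratcrit} (applied to any single choice of $\varphi$, e.g.\ taking $s_0=s_1$), so $X$ is $k$-unirational. The substance of the statement is the triviality of $R$-equivalence, and here the idea is to show that any two rational points $x_0,x_1\in X(k)$ are directly $R$-equivalent by producing a single rational map $\mathbf{P}^1_k\dashrightarrow X$ whose image over $k$ contains both.

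The key mechanism is as follows. Given $x_0,x_1\in X(k)$, set $s_i=f(x_i)\in\mathbf{P}^1(k)$ and take $\varphi:\mathbf{P}^1_k\to\mathbf{P}^1_k$ provided by the hypotheses for this pair $(s_0,s_1)$. Condition~(a) guarantees, via Remark~\ref{coverkillresidues}, that the base change $f':X'\rightarrow\mathbf{P}^1_k$ of $f$ by $\varphi$ admits a section $\sigma:\mathbf{P}^1_k\to X'$. Composing with the natural projection $X'=X\times_{\mathbf{P}^1_k,\varphi}\mathbf{P}^1_k\to X$ yields a morphism $g:\mathbf{P}^1_k\to X$ with $f\circ g=\varphi$, so the image $g(\mathbf{P}^1(k))$ is a rational curve sweeping out fibres over all of $\varphi(\mathbf{P}^1(k))$. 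By condition~(b) there are rational points $u_0\in\varphi^{-1}(s_0)(k)$ and $u_1\in\varphi^{-1}(s_1)(k)$; their images $\sigma(u_0),\sigma(u_1)$ lie in $X'(k)$ and push forward to points of $X(k)$ lying in the fibres $X_{s_0}$ and $X_{s_1}$ respectively. This exhibits a rational curve on $X$ meeting both $X_{s_0}(k)$ and $X_{s_1}(k)$, which is the first ingredient toward direct $R$-equivalence.

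The main obstacle is that $\sigma(u_0)$ need not coincide with the prescribed point $x_0$ (similarly for $x_1$); the section lands at \emph{some} rational point of $X_{s_0}$, not necessarily the given one. To close this gap I would invoke the fact that within a single split conic fibre $X_s\simeq\mathbf{P}^1_k$ (split because $s\notin B$, or handled separately if $s\in B$ — though for the $R$-equivalence of $x_0,x_1$ one typically reduces to $s_0,s_1$ being points of split fibres, or uses that any two rational points of a conic with a rational point are $R$-equivalent within that conic), all rational points are mutually $R$-equivalent: a smooth conic over $k$ with a rational point is isomorphic to $\mathbf{P}^1_k$, so $x_0$ and $\sigma(u_0)$, both lying in $X_{s_0}(k)$, are directly $R$-equivalent via the fibre itself. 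Thus one chains $x_0\sim\sigma(u_0)\sim\sigma(u_1)\sim x_1$, where the middle link is the curve $g(\mathbf{P}^1_k)$ and the outer links are the fibral conics.

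Concretely, I would organise the argument as: (i) record unirationality from Theorem~\ref{uniratcrit}; (ii) fix arbitrary $x_0,x_1\in X(k)$, form $s_i=f(x_i)$, and extract $\varphi$, the section $\sigma$, and the rational fibre-points $u_i$ as above; (iii) build $g$ and note $g(u_i)\in X_{s_i}(k)$; (iv) argue that each fibre $X_{s_i}$ containing a rational point is a split conic, hence $\mathbf{P}^1_k$, so its rational points form one $R$-equivalence class, giving $x_i\sim_R g(u_i)$; (v) conclude by transitivity that $x_0\sim_R x_1$, whence $X(k)/R$ is trivial. The step requiring the most care is verifying that the relevant fibres are indeed split conics with a rational point so that the intra-fibre $R$-equivalence applies — one must ensure the chosen $s_i$, being images of rational points $x_i\in X(k)$, have $X_{s_i}(k)\neq\emptyset$ and that $X_{s_i}$ is split (which holds automatically if $s_i\notin B$, and the hypotheses are arranged so that one may work modulo this).
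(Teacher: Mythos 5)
Your proposal is correct and follows essentially the same route as the paper: unirationality via Theorem~\ref{uniratcrit}, a section of the base-changed bundle obtained from condition (a) via Remark~\ref{coverkillresidues}, rational points in $\varphi^{-1}(s_i)$ from condition (b), and chaining $x_0\sim\sigma(u_0)\sim\sigma(u_1)\sim x_1$ through the fibral conics. The splitness worry in your step (iv) is unnecessary: Reminder~\ref{RequivCONIC}(i) gives triviality of $R$-equivalence on \emph{any} conic with a rational point, and $X_{s_i}$ contains $x_i$ by construction.
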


Let us recall the following elementary observation on conics.
\begin{remi}\label{RequivCONIC}
Let $F$ be a field of characteristic different from $2$ and $C$ a conic over $F$.
\begin{enumerate}[label=(\roman*)]
\item If $C(F)\neq\emptyset$, then $R$-equivalence is trivial on $C$.
\item If $\cd(F)\leq1$, then $C(F)\neq\emptyset$.
\end{enumerate} 
\end{remi}
\begin{proof}
For (i), if $C(F)\neq\emptyset$, then $C$ is either a line, or a union of two distinct lines in~$\mathbf{P}^2_F$ or a double line, so that $R$-equivalence is trivial on it. For (ii), if $C$ is smooth, it has a class in $\br(k)\left[2\right]$. But $\br(k)\left[2\right]=\coh^2(k,\mathbf{Z}/2\mathbf{Z})$, and the latter is zero since $\cd(F)\leq1$. We thus have that $C\simeq\mathbf{P}^1_F$ from which the statement follows. If $C$ is singular, all singular points are rational.
%For (i), we let $\overline{F}$ be an algebraic closure of~$F$ and $q$ a quadratic form in $F\left[x,y,z\right]$ such that $C=V(q)\subset\mathbf{P}^2_F$. Over $\overline{F}$, either $q$ is irreducible, or we can write $q=l_1\times l_2$ where the $l_i$'s are linear forms over~$\overline{F}$. If $C(F)\neq\emptyset$, the form $q$ has a nontrivial zero, so that $C$ is either isomorphic to $\mathbf{P}^1_F$ or~$q=l_1\times l_2$ where the $l_i$'s are linear forms over~$F$. In other words, either $C$ is a line, or it is a union of two distinct lines in $\mathbf{P}^2_F$ or it is a double line, which proves (i).
\end{proof}
%Indeed, the latter are either a line, or a union of two distinct lines, or a double line, and $R$-equivalence is clearly trivial in each case. Let us recall where that trichotomy comes from. Let $\overline{F}$ be an algebraic closure of~$F$, let~$q$ a quadratic form in $F\left[x,y,z\right]$ and~$C\coloneqq V(q)\subset\mathbf{P}^1_F$ the corresponding conic. Then, over $\overline{F}$, either $q$ is irreducible, or we can write $q=l_1\times l_2$ where the $l_i$'s are linear forms over~$\overline{F}$. If furthermore $q$ has a nontrivial zero, then $C$ is either isomorphic to $\mathbf{P}^1_F$ or~$q=l_1\times l_2$ where the $l_i$'s are linear forms over~$F$, and in this case $C$ is either a union of two distinct lines in $\mathbf{P}^2_F$ or it is a double line. We sum it up in the following reminder.

%for conics $C$ over a field $k$, if $C(k)\neq\emptyset$, then $C(k)/R$ is trivial.
We may now give a proof of Corollary \ref{Requicrit}.
\begin{proof}[Proof of Corollary \ref{Requicrit}]
By Theorem \ref{uniratcrit}, $X$ is $k$-unirational. Let us prove that~$X(k)/R$ has cardinality one. Since the conic $X_0$ has a rational point, by (ii) of Reminder \ref{RequivCONIC},~$X(k)\neq\emptyset$ so that it remains to prove that for all $x_0,x_1\in X(k)$, the points $x_0$ and $x_1$ are $R$-equivalent.

Set $s_0=f(x_0)$, $s_1=f(x_1)$ which are rational points of~$\mathbf{P}^1_k$. Choose $\varphi:\mathbf{P}^1_k\rightarrow\mathbf{P}^1_k$ as in the statement and denote by $f':X'\rightarrow\mathbf{P}^1_k$ the base change of~$f$ by $\varphi$ and by~$g:X'\rightarrow X$ the base change of~$\varphi$ by $f$. Using (b), let us choose~$t_0$ (resp.~$t_1$) a rational point of~$\varphi^{-1}(s_0)$ (resp. $\varphi^{-1}(s_1)$). Since $\varphi$ verifies (a), Remark \ref{coverkillresidues} ensures that~$f'$ has a section~$h:\mathbf{P}^1_k\rightarrow X'$. Set~$x'_0=h(t_0)$ and~$x'_1=h(t_1)$, so that~$x'_0,x'_1\in X'(k)$. As the rational points~$g(x'_0)$ and~$g(x'_1)$ of~$X$ lie in $(g\circ h)(\mathbf{P}^1(k))$, they are $R$-equivalent. Moreover,~$x_0$ and~$g(x'_0)$ (resp.~$x_1$ and $g(x'_1)$) lie on the same fibre of $f$, which is a conic, hence they are $R$-equivalent by~(i) of Reminder \ref{RequivCONIC}. This proves that $x_0$ and $x_1$ are $R$-equivalent.
\end{proof}

\section{Proof of the main results}\label{secMAINRESU}

In this section, we assume that $k$ is a $2$-quasi-finite field and we prove Theorems \ref{unirationalconicbundles} and \ref{Requivalenceconicbundles}. We respectively make use of Theorem \ref{uniratcrit} and Corollary~\ref{Requicrit}.

\subsection{Some ramified covers of $\mathbf{P}^1_k$}\label{subsecCOVER}

This subsection encapsulates the construction of particular covers of the projective line that are thoroughly used in the proofs of Theorems \ref{unirationalconicbundles} and \ref{Requivalenceconicbundles}.

\begin{lem}\label{reducingdegree}
Let $m$ be a closed point of $\mathbf{P}^1_k$ such that $\deg(m)=2d$, with $d\in\mathbf{Z}_{>0}$. Then there exists a degree $d$ morphism $\varphi:\mathbf{P}^1_k\rightarrow\mathbf{P}^1_k$ such that $\deg(\varphi(m))=2$.
\end{lem}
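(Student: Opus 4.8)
The plan is to translate the statement into function fields and build $\varphi$ explicitly from the arithmetic of $\kappa(m)$. Since $\deg(m)=2d\geq 2$, the point $m$ is not rational, so after choosing an affine coordinate $x$ on $\mathbf{P}^1_k$ we may assume $m$ lies in $\spec k[x]$ and corresponds to a monic irreducible $p\in k[x]$ of degree $2d$; write $\kappa(m)=k(\alpha)$ where $\alpha$ is the image of $x$. A dominant morphism $\varphi\colon\mathbf{P}^1_k\to\mathbf{P}^1_k$ is the same as a rational function $R\in k(x)$, its degree being the degree of $R$ in lowest terms, and the image point $s=\varphi(m)$ has residue field $\kappa(s)=k(R(\alpha))\subseteq\kappa(m)$. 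Thus the task reduces to producing $R$ of degree exactly $d$ with $[k(R(\alpha)):k]=2$. Because $k$ is $2$-quasi-finite and $\deg(m)=2d$ is even, Proposition \ref{2QF}(c) furnishes the unique degree-$2$ subextension $L\subseteq\kappa(m)$ of $k$, and by multiplicativity $[\kappa(m):L]=d$; the target will be to arrange $R(\alpha)$ to generate $L$.

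Next I would construct $R$ from the minimal polynomial of $\alpha$ over $L$. Let $g\in L[T]$ be that minimal polynomial: it is monic of degree $d$. Fixing a $k$-basis $\{1,\beta\}$ of $L$ with $L=k(\beta)$, I would expand each coefficient of $g$ in this basis and collect terms, writing $g=\gamma+\beta\gamma'$ with $\gamma,\gamma'\in k[T]$; matching leading coefficients shows $\gamma$ is monic of degree $d$ and $\deg\gamma'\leq d-1$. Evaluating $g(\alpha)=0$ gives $\gamma(\alpha)+\beta\gamma'(\alpha)=0$, hence $\beta=-\gamma(\alpha)/\gamma'(\alpha)$. I would therefore set $R\coloneqq-\gamma/\gamma'$ and let $\varphi$ be the associated morphism, so that $R(\alpha)=\beta$ and $\kappa(\varphi(m))=k(\beta)=L$ has degree $2$, as desired. (One checks $\gamma'(\alpha)\neq0$, and indeed $\gamma'\neq0$: since $\deg\gamma'<2d$ and $\alpha$ has degree $2d$ over $k$, vanishing of $\gamma'(\alpha)$ would force $\gamma'=0$, i.e.\ $g\in k[T]$, contradicting $[\kappa(m):k]=2d$.)

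The main obstacle — and the only nontrivial point — is to guarantee that $\deg(\varphi)=d$ rather than something smaller, i.e.\ that no cancellation occurs in $-\gamma/\gamma'$; equivalently $\gcd(\gamma,\gamma')=1$, after which $\deg(\varphi)=\max(\deg\gamma,\deg\gamma')=d$. Here I would compare $g$ with its Galois conjugate $\bar g=\rho(g)$, where $\rho$ generates $\gal(L/k)$. Both $g$ and $\bar g$ are monic irreducible of degree $d$ over $L$, and they are distinct: otherwise $g=\bar g$ is $\rho$-invariant, hence lies in $k[T]$, again contradicting $[\kappa(m):k]=2d$. Two distinct monic irreducibles are coprime, so $\gcd(g,\bar g)=1$ in $L[T]$. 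Finally, since $\beta\neq\bar\beta=\rho(\beta)$, the relations $g=\gamma+\beta\gamma'$ and $\bar g=\gamma+\bar\beta\gamma'$ are invertibly linear over $L$, so $\{\gamma,\gamma'\}$ and $\{g,\bar g\}$ generate the same ideal of $L[T]$; as the monic $\gcd$ is insensitive to the base field, $\gcd(\gamma,\gamma')=\gcd(g,\bar g)=1$ in $k[T]$. This yields $\deg(\varphi)=d$ and completes the argument.
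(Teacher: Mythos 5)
Your proof is correct, and it takes a genuinely different route from the paper's. The paper base changes to the quadratic extension $l/k$, where $m$ splits into two conjugate points $m_1,m_2$, picks $f\in l(\mathbf{P}^1)$ with $\divcart(f)=m_1-m_2$, and then needs a descent step: it rescales $f$ so that $f\cdot{}^{\sigma}f=1$, which requires the surjectivity of the norm $N_{l/k}$ (deduced from $\cd(k)=1$ via surjectivity of the corestriction), before composing with a suitable Möbius map and applying Galois descent. You instead work entirely over $k$: writing the minimal polynomial $g\in L[T]$ of $\alpha$ over the quadratic subfield $L\subseteq\kappa(m)$ as $\gamma+\beta\gamma'$ and setting $R=-\gamma/\gamma'$, with the coprimality of $\gamma$ and $\gamma'$ (hence $\deg\varphi=d$) extracted from the coprimality of the distinct conjugate irreducibles $g$ and $\bar g$. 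Unwinding the linear algebra, your $R$ is a Möbius transform of $g/\bar g$, so the two proofs construct essentially the same morphism; but your version is automatically $\sigma$-invariant because $\gamma,\gamma'$ have coefficients in $k$ by construction, so it dispenses with both the descent formalism and the cohomological input of norm surjectivity. It therefore applies over any field for which $\kappa(m)/k$ is separable and admits a quadratic subextension, whereas the paper's argument uses $\cd(k)\leq1$. The only thing the paper's normalization buys is that $\varphi(m)$ is the specific point $x^2-\alpha$, but this is immaterial since any two degree-$2$ points of $\mathbf{P}^1_k$ differ by an automorphism, exactly as exploited in Corollary~\ref{correducingdegree}. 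All the small verifications in your argument (that $\kappa(\varphi(m))=k(R(\alpha))$, that $\gamma'(\alpha)\neq0$, that the gcd is insensitive to extension of the base field) are sound.
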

\begin{proof}[Proof of Lemma \ref{reducingdegree}]
Let $l/k$ be a degree $2$ extension. Note that $l$ is unique up to isomorphism and $\kappa(m)/k$ is Galois, since $k$ is $2$-quasi-finite. All along the proof, we denote by $\sigma$ the nontrivial element of $\gal(l/k)$, we fix $\alpha\in k^{\times}\setminus (k^{\times})^2$ and set~$P\in\mathbf{A}^1_k$ corresponding to the polynomial~$x^2-\alpha$, so that $\deg(P)=2$. For~$h\in l(\mathbf{P}^1)$, we denote by $^{\sigma}h$ the image of $h$ by the left action of $\sigma$ on $l(\mathbf{P}^1)$.

Since $\kappa(m)/k$ is Galois and $l$ is unique up to isomorphism, the extension $l/k$ sits in~$\kappa(m)/k$. Thus, the fibre of $m$ under the morphism $\mathbf{P}^1_l\rightarrow\mathbf{P}^1_k$, which is the fibre product of $\spec(l)\rightarrow\spec(k)$ with $\mathbf{P}^1_k$, is made of two degree $2d$ points $m_1,m_2\in\mathbf{P}^1_l$, as $\kappa(m)\otimes_kl=\kappa(m)\otimes_l(l\otimes_kl)=\kappa(m)\times\kappa(m)$. Since $m_1-m_2$ has degree zero, there exists $f\in l(\mathbf{P}^1)$ such that~$\divcart(f)=m_1-m_2$, so that the map $f:\mathbf{P}^1_l\rightarrow\mathbf{P}^1_l$ it induces verifies~$m_1=f^{-1}(0)$,~$m_2=f^{-1}(\infty)$. Let $g$ be an automorphism of~$\mathbf{P}^1_l$ such that $g(0)=\sqrt{\alpha}$ and~$g(\infty)=-\sqrt{\alpha}$ and~$( ^{\sigma}g)(x)=g(1/\sigma(x))$. We are going to prove that there exists $u\in l^{\times}$ such that~$g\circ(uf)$ is $\sigma$-invariant as an element of~$l(\mathbf{P}^1)$. By Galois descent,~$g\circ(uf)$ will then be the base change of a morphism $\varphi:\mathbf{P}^1_k\rightarrow\mathbf{P}^1_k$. Since $(g\circ(uf))^{-1}\{\sqrt{\alpha},-\sqrt{\alpha}\}=\{m_1,m_2\}$, this means that $\varphi(m)=P$ and~$\varphi$ is of degree $d$, which will prove the statement.

Let us first note that $\divcart( {^{\sigma}f})=m_2-m_1$ so that $\divcart(f\times {^{\sigma}f})=0$, that is, there exists~$v\in l^{\times}$ such that $f\times{^\sigma f}=v$. Furthermore, as $v$ is $\sigma$-invariant, we have $v\in k^{\times}$. Since the cohomological dimension of $k$ is one, the corestriction map $\cores_{l/k}:\coh^1(l,\mathbf{Z}/2\mathbf{Z})\rightarrow\coh^1(k,\mathbf{Z}/2\mathbf{Z})$ is surjective by \cite[Proposition 3.3.11]{MR2392026}. Hence, the norm map $N_{l/k}:l^{\times}\rightarrow k^{\times}$ is surjective. In particular, there exists $u\in l^{\times}$ such that $v=u\times\sigma(u)$. After replacing $f$ by $f/u$, we may then assume that~$f\times{}^{\sigma}f=1$. Thus:
$${}^{\sigma}(g\circ f)=g\circ(1/({}^{\sigma}f))=g\circ f$$
from which we deduce that $g\circ f$ is $\sigma$-invariant.
\end{proof}
\begin{cor}\label{correducingdegree}
If $P$ is a closed point of $\mathbf{P}^1_k$ such that $\deg(P)=2$, then there exists a finite morphism $\varphi:\mathbf{P}^1_k\rightarrow\mathbf{P}^1_k$ of degree $2$ such that $\varphi^{-1}(P)$ is a closed point of degree $4$.
\end{cor}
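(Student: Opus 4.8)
The plan is to obtain $\varphi$ by running the construction of Lemma~\ref{reducingdegree} ``backwards'', applying it to a point of degree $4$ and then repositioning the target point.

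First I would produce a closed point $m$ of $\mathbf{P}^1_k$ of degree $4$. Since $k$ is $2$-quasi-finite, its absolute Galois group $\prod_{p\in S}\mathbf{Z}_p$ (with $2\in S$) has an open subgroup of index $4$, so $k$ admits a degree $4$ extension; as $k$ is perfect this extension is separable, hence of the form $k(\theta)$, and the minimal polynomial of $\theta$ cuts out a degree $4$ point $m\in\mathbf{A}^1_k\subset\mathbf{P}^1_k$. Applying Lemma~\ref{reducingdegree} to $m$ with $d=2$ then yields a degree $2$ morphism $\psi:\mathbf{P}^1_k\rightarrow\mathbf{P}^1_k$ such that $P_0\coloneqq\psi(m)$ is a point of degree $2$.

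The crux is a degree count showing that $\psi^{-1}(P_0)$ is exactly $m$. The fibre $\psi^{-1}(P_0)$ is a zero-dimensional subscheme of total degree $\deg(\psi)\times\deg(P_0)=4$ over $k$; since $m$ belongs to its support and already has degree $4$, the fibre must coincide with $m$, with multiplicity one. Thus $\psi^{-1}(P_0)=m$ is a single reduced closed point of degree $4$, and it is this numerical argument---rather than any finer control of ramification---that does the real work.

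It then remains to replace the standard point $P_0$ by the prescribed point $P$. By Proposition~\ref{2QF} both $P$ and $P_0$ have residue field the unique degree $2$ extension of $k$; writing each as the zero locus of an irreducible quadratic, completing the square, and rescaling the coordinate (legitimate because the two defining non-squares generate the same extension, hence differ by a square in $k^\times$) exhibits an automorphism $\tau$ of $\mathbf{P}^1_k$ with $\tau(P_0)=P$. Setting $\varphi\coloneqq\tau\circ\psi$, one has $\deg(\varphi)=2$ and $\varphi^{-1}(P)=\psi^{-1}(\tau^{-1}(P))=\psi^{-1}(P_0)=m$, a closed point of degree $4$, as desired. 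The only point requiring a word of care is that $P$ lies in an affine chart, which is automatic since a degree $2$ point meets no rational point of $\mathbf{P}^1_k$.
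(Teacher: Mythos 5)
Your proof is correct and takes essentially the same route as the paper: apply Lemma~\ref{reducingdegree} to a point of degree $4$ to obtain a degree $2$ cover whose image point has degree $2$, then transport that image to $P$ by an automorphism of $\mathbf{P}^1_k$. The explicit degree count identifying the fibre with $m$, and the justifications for the existence of the degree $4$ point and of the automorphism, are details the paper leaves implicit, but the argument is the same.
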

\begin{proof}
Let $m$ be a point of $\mathbf{P}^1_k$ of degree $4$. By Lemma \ref{reducingdegree}, there exists a finite morphism $\theta:\mathbf{P}^1_k\rightarrow\mathbf{P}^1_k$ of degree $2$ such that $\theta(m)$ is a point of degree $2$. Since $\theta(m)$ and $P$ are both points of degree $2$, there exists an automorphism $\psi$ of $\mathbf{P}^1_k$ mapping $\theta(m)$ to $P$. Thus, the map $\varphi\coloneqq\psi\circ\theta$ is the sought cover.
\end{proof}

\begin{lem}\label{twistingdegreetwo}
Let $\varphi:\mathbf{P}^1_k\rightarrow\mathbf{P}^1_k$ be a degree $2$ cover, $U\subset\mathbf{P}^1_k$ be the complement of the branch locus of $\varphi$. Then there exists a degree $2$ cover $\psi:\mathbf{P}^1_k\rightarrow\mathbf{P}^1_k$ with branch locus~$\mathbf{P}^1_k\setminus U$ such that for any closed point $m\in U$:
\begin{enumerate}[label=(\roman*)]
\item if $\deg(m)$ is odd and $\varphi^{-1}(m)$ is a point of degree $2$ over $m$, then $\psi^{-1}(m)$ is made of two rational points over $m$;
\item if $\deg(m)$ is odd and $\varphi^{-1}(m)$ is made of two rational points over $m$, then~$\psi^{-1}(m)$ is a point of degree $2$ over $m$;
\item if $\deg(m)$ is even, then $\varphi^{-1}(m)$ and $\psi^{-1}(m)$ are isomorphic over $m$.
\end{enumerate}
\end{lem}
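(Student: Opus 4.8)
The plan is to take $\psi$ to be the quadratic twist of $\varphi$ by the non-square class that cuts out the unique quadratic extension of $k$. Since $k$ is $2$-quasi-finite, I first fix $\alpha\in k^{\times}\setminus(k^{\times})^2$ and let $l=k(\sqrt{\alpha})$ be the unique quadratic extension of $k$ (Proposition \ref{2QF}). Write $k(t)$ for the function field of the target $\mathbf{P}^1_k$. As $k$ has characteristic different from $2$, the degree $2$ cover $\varphi$ is Galois with group $\mathbf{Z}/2\mathbf{Z}$, so by Kummer theory its function field is $k(t)(\sqrt{h})$ for some $h\in k(t)^{\times}$, well defined modulo $(k(t)^{\times})^2$; the branch locus $\mathbf{P}^1_k\setminus U$ is the reduced support of the odd part of $\divcart(h)$. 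I then define $\psi\colon\mathbf{P}^1_k\to\mathbf{P}^1_k$ to be the degree $2$ cover attached to $\alpha h$.

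First I would verify that $\psi$ has the required shape. Since $\alpha\in k^{\times}$ is a constant, $\divcart(\alpha h)=\divcart(h)$, so the normalisation $C$ of $\mathbf{P}^1_k$ in $k(t)(\sqrt{\alpha h})$ ramifies over exactly the same points as $\varphi$; in particular $\psi$ has branch locus $\mathbf{P}^1_k\setminus U$. As the branch divisor has degree $2$ and $\alpha h$ is not a square (its divisor has odd parts, hence $C$ is geometrically integral), Riemann--Hurwitz gives that $C$ has genus $0$. Being a smooth genus $0$ curve over a field with $\cd(k)\le 1$, it has a rational point by (ii) of Reminder \ref{RequivCONIC} and is therefore isomorphic to $\mathbf{P}^1_k$. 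Thus $\psi$ is a genuine degree $2$ cover $\mathbf{P}^1_k\to\mathbf{P}^1_k$ with branch locus $\mathbf{P}^1_k\setminus U$.

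The heart of the argument is the fibre computation over a closed point $m\in U$. There, after absorbing an even power of a uniformiser into a square, the fibres $\varphi^{-1}(m)$ and $\psi^{-1}(m)$ are governed respectively by the classes of $h(m)$ and of $\alpha h(m)$ in $\kappa(m)^{\times}/(\kappa(m)^{\times})^2\simeq\mathbf{Z}/2\mathbf{Z}$ (Proposition \ref{2QF}(b)): the fibre consists of two rational points over $m$ when the class is trivial and of a single point of degree $2$ over $m$ otherwise. The statement then reduces to deciding when $\alpha$ becomes a square in $\kappa(m)$. Now $\alpha\in(\kappa(m)^{\times})^2$ if and only if $l\subseteq\kappa(m)$, which by Proposition \ref{2QF}(c) and the procyclic structure of $\gal(\overline{k}/k)$ happens precisely when $2\mid\deg(m)$. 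Equivalently, on $\coh^1(\cdot,\mathbf{Z}/2\mathbf{Z})=\mathbf{Z}/2\mathbf{Z}$ the map $\res_{\kappa(m)/k}$ is zero for even $\deg(m)$ and an isomorphism for odd $\deg(m)$, since $\cores_{\kappa(m)/k}\circ\res_{\kappa(m)/k}$ is multiplication by $\deg(m)$ while $\cores_{\kappa(m)/k}$ is an isomorphism by $\cd(k)\le 1$.

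Assembling the pieces: if $\deg(m)$ is even then the class of $\alpha$ in $\kappa(m)^{\times}/(\kappa(m)^{\times})^2$ is trivial, so $\alpha h(m)$ and $h(m)$ have the same class and $\varphi^{-1}(m)\cong\psi^{-1}(m)$, which is (iii); if $\deg(m)$ is odd then the class of $\alpha$ is non-trivial, so the class of $\alpha h(m)$ is the opposite of that of $h(m)$, interchanging the two fibre types and yielding (i) and (ii). The one point that genuinely requires the hypotheses is the source of $\psi$: a priori the twist could be a conic without a rational point, and it is exactly $\cd(k)\le 1$ that rules this out and guarantees $\psi$ is a cover by $\mathbf{P}^1_k$. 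The remaining verifications (the Kummer presentation, the independence of the fibre type from the choice of $h$ modulo squares, and tameness of the covers) are routine in characteristic different from $2$.
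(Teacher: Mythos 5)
Your proof is correct and follows essentially the same route as the paper: you construct $\psi$ as the twist of $\varphi$ by a constant class $\alpha\in k^{\times}\setminus(k^{\times})^{2}$ (phrased via Kummer theory, $k(t)(\sqrt{h})\mapsto k(t)(\sqrt{\alpha h})$, where the paper phrases it as adding $u^{*}(\alpha)$ to the torsor class in $\coh^1(U,\mathbf{Z}/2\mathbf{Z})$), identify the twisted genus-zero curve with $\mathbf{P}^1_k$ using $\cd(k)\leq1$, and decide the fibre type over $m$ by whether $\alpha$ is a square in $\kappa(m)$, i.e.\ by the parity of $\deg(m)$ via restriction--corestriction, exactly as in the paper.
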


\begin{proof}[Proof of Lemma \ref{twistingdegreetwo}]
Let us first note that if $u:U\rightarrow\spec(k)$ is the structural morphism of $U$, then the following diagram is commutative
\begin{center}
\begin{tikzcd}
\mathbf{Z}/2\mathbf{Z} \arrow[rrr,"{\times [\kappa(m):k]}"] \arrow[d,equal] & & & \mathbf{Z}/2\mathbf{Z} \arrow[d,equal] \\
\coh^1(k,\mathbf{Z}/2\mathbf{Z}) \arrow[r,"u^*"]\arrow[rr,bend right, "\res_{\kappa(m)/k}"] & \coh^1(U,\mathbf{Z}/2\mathbf{Z})\arrow[r,"m^*"] & \coh^1(\kappa(m),\mathbf{Z}/2\mathbf{Z}) \arrow[r,"\cores_{\kappa(m)/k}", outer sep=0.5em] & \coh^1(k,\mathbf{Z}/2\mathbf{Z})
\end{tikzcd}
\end{center}
where the vertical equalities come from Proposition \ref{cores2QF}, by $2$-quasi-finiteness of $k$, and the commutativity of the whole diagram is the restriction-corestriction formula \cite[Proposition 4.2.10]{MR2266528}.

Denote by $\alpha$ the nonzero class in $\coh^1(k,\mathbf{Z}/2\mathbf{Z})$ and $\tau:\varphi^{-1}(U)\rightarrow U$ the restriction of~$\varphi$ above $U$, which defines a class in $\coh^1(U,\mathbf{Z}/2\mathbf{Z})$. Let $\tau':V\rightarrow U$ be a $\mathbf{Z}/2\mathbf{Z}$-torsor whose class in $\coh^1(U,\mathbf{Z}/2\mathbf{Z})$ is $u^*(\alpha)+[\tau]$, that is, a twist of~$\tau$ by any element of $k^{\times}\setminus (k^{\times})^2$. Denote by $\psi:Y\rightarrow\mathbf{P}^1_k$ the normalisation of $\mathbf{P}^1_k$ in $\spec(k(V))$, so that $Y$ is a smooth projective curve. Since $Y\otimes_k\overline{k}$ is the normalisation of $\mathbf{P}^1_{\overline{k}}$ in $\overline{k}(V)=\overline{k}(\varphi^{-1}(U))$, this means that~$Y\otimes_k\overline{k}\simeq\mathbf{P}^1_{\overline{k}}$. In particular, $Y$ is a smooth geometrically connected projective curve of genus $0$, that is, $Y$ is isomorphic to a smooth conic. As $\cd(k)=1$, we have~$Y(k)\neq\emptyset$ by Reminder~\ref{RequivCONIC}, so that $Y\simeq\mathbf{P}^1_k$ and $\psi:\mathbf{P}^1_k\rightarrow\mathbf{P}^1_k$.

Let us now prove that $\psi$ is the sought cover. Indeed, in $\coh^1(\kappa(m),\mathbf{Z}/2\mathbf{Z})$, we have:
\begin{equation}\label{fibrepsi}
[\psi^{-1}(m)]=m^*([\tau'])=m^*(u^*(\alpha)+[\tau])=\res_{\kappa(m)/k}(\alpha)+[\varphi^{-1}(m)]\in\mathbf{Z}/2\mathbf{Z}.
\end{equation}
Moreover, $\cores_{\kappa(m)/k}$ is an isomorphism by Proposition \ref{cores2QF}. Besides, from the identity $$\cores_{\kappa(m)/k}\circ\res_{\kappa(m)/k}=[\kappa(m):\kappa]$$
we may rewrite equation (\ref{fibrepsi}) as
$$[\psi^{-1}(m)]=[\kappa(m):k]+[\varphi^{-1}(m)]\in\coh^1(\kappa(m),\mathbf{Z}/2\mathbf{Z})=\mathbf{Z}/2\mathbf{Z}$$
which is the statement we wanted to prove.
\end{proof}

\subsection{Proof of Theorem \ref{unirationalconicbundles}}\label{proofunirationalconicbundles}

Let us prove Theorem \ref{unirationalconicbundles} using Theorem~\ref{uniratcrit}. For this purpose, we show that for any~$B$ as in~(\ref{unirationalcondition}), there exists a cover $\varphi:\mathbf{P}^1_k\rightarrow\mathbf{P}^1_k$ such that for all~$s\in B$ and $t\in\varphi^{-1}(s)$ we have
\begin{equation}\label{evencondition}
2\mid e(t/s)\times[\kappa(t):\kappa(s)].	
\end{equation}

In the following lemma, we start by tackling the case where $B\subset\mathbf{P}^1(k)$.
\begin{lem}\label{killingrationalresidues}
Let $B\subset\mathbf{P}^1(k)$. Then, for any two points $P,Q\in B$, there exists a dominant map $\varphi:\mathbf{P}^1_k\rightarrow\mathbf{P}^1_k$ whose degree is a power of $2$, satisfying condition~(\ref{evencondition}), with the further assumption that $\varphi$ is totally ramified above $P$ and $Q$.
\end{lem}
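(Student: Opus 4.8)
The goal is to construct, for two given rational points $P,Q\in B\subset\mathbf{P}^1(k)$, a dominant morphism $\varphi:\mathbf{P}^1_k\rightarrow\mathbf{P}^1_k$ of $2$-power degree, totally ramified over $P$ and $Q$, and such that for every $s\in B$ and $t\in\varphi^{-1}(s)$ the integer $e(t/s)\times[\kappa(t):\kappa(s)]$ is even. Since all points of $B$ are rational, the factor $[\kappa(t):\kappa(s)]$ is the residue degree $[\kappa(t):k]$ when $\kappa(s)=k$, so condition (\ref{evencondition}) holds at $t$ as soon as either $e(t/s)$ is even or $t$ is a non-rational point of the fibre. The plan is therefore to arrange that above \emph{each} rational point $s\in B$, every rational preimage $t$ (i.e.\ every $t$ with $\kappa(t)=k$) is a ramification point with even ramification index. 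For $t$ lying over $s$ with $\kappa(t)\neq k$ the condition is automatic.

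\textbf{First step: one degree $2$ cover per point.} The key building block is a degree $2$ cover $\varphi_s:\mathbf{P}^1_k\rightarrow\mathbf{P}^1_k$ that is totally ramified over a prescribed rational point $s$. Concretely one can take $\varphi_s$ to be the unique (up to automorphism) double cover branched exactly at $s$ and at one auxiliary rational point; over $s$ the fibre is then a single rational point with $e=2$, so condition (\ref{evencondition}) is satisfied at the preimage of $s$. The first thing I would record is that such a totally ramified double cover exists for any rational point, and that because $k$ is $2$-quasi-finite of cohomological dimension one every double cover of $\mathbf{P}^1_k$ has again $\mathbf{P}^1_k$ as its source (the normalisation is a genus $0$ curve with a rational point, hence $\simeq\mathbf{P}^1_k$), exactly as in the source curve argument used in the proof of Lemma~\ref{twistingdegreetwo}.

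\textbf{Second step: composing and controlling all preimages.} The plan is to build $\varphi$ as a composite (a tower) of such double covers, one designed for each point of $B$, and then to check condition (\ref{evencondition}) at every point of $B$ simultaneously. The delicate point is that a double cover built to be ramified over $P$ will a priori behave uncontrollably over the other points of $B$: a rational point $s'\neq P$ could split into two rational points, each \emph{unramified}, at which $e(t/s')\times[\kappa(t):k]=1$ is odd. Here is where I expect the main obstacle to lie, and the way around it is to exploit the parity bookkeeping of Lemma~\ref{twistingdegreetwo}: over an odd-degree point (and rational points have odd degree $1$) a degree $2$ cover either produces a single degree $2$ point or two rational points, and these two possibilities differ precisely by a twist by $\alpha\in k^\times\setminus(k^\times)^2$. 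Thus at any rational $s'$ over which a partial cover has created two unramified rational preimages, I can apply a further twisted double cover which, by (\ref{fibrepsi}), turns the fibre into a single degree $2$ point, making $[\kappa(t):k]$ even and rescuing condition (\ref{evencondition}).

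\textbf{Assembling the tower and the main difficulty.} Concretely I would proceed inductively on the points of $B$: having produced a cover that satisfies (\ref{evencondition}) over the points treated so far, I compose with a double cover (totally ramified where needed, twisted where needed) handling the next point, and verify via the commutative residue/corestriction diagram of Lemma~\ref{twistingdegreetwo} that the parity of $e(t/s)\times[\kappa(t):\kappa(s)]$ is preserved or improved at the already-treated points. The hardest part of the argument will be exactly this simultaneous control: ensuring that the cover chosen to make the fibre over the current point even does not reintroduce an odd rational preimage over a previously treated point of $B$, and in particular handling the two distinguished points $P,Q$ where \emph{total} ramification is additionally demanded. I would resolve this by choosing the tower so that $P$ and $Q$ serve as the branch points of the successive double covers, so that total ramification over them is built in from the start, while the twisting freedom coming from $2$-quasi-finiteness is used at the remaining rational points of $B$; a final parity count using $\prod \deg$ being a power of $2$ then shows the construction terminates with condition (\ref{evencondition}) holding everywhere on $B$.
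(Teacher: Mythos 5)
Your plan assembles the right ingredients --- a tower of degree $2$ covers branched at $P$ and $Q$ (so that total ramification there is automatic and condition~(\ref{evencondition}) at $P,Q$ comes for free), the observation that a preimage with even residue degree or even ramification index stays harmless under any further base change, and the use of Lemma~\ref{twistingdegreetwo} to trade a split fibre for an inert one --- and it correctly locates the difficulty in the simultaneous control of all points of $B$. But it does not close that gap, and closing it is the actual content of the lemma. Since $k^{\times}/(k^{\times})^{2}\simeq\mathbf{Z}/2\mathbf{Z}$, a given double cover has exactly \emph{one} nontrivial twist, and by Lemma~\ref{twistingdegreetwo} that twist flips split$\leftrightarrow$inert at \emph{every} odd-degree unramified point simultaneously; so you cannot, as your second step suggests, repair the fibre over one bad rational point by a further twisted double cover without at the same time disturbing the fibres over all the other rational points still in play. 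Worse, each rational point that splits produces \emph{two} new rational points to be treated at the next stage, so the set of untreated points need not shrink from one level of the tower to the next; the concluding assertion that ``a final parity count using $\prod\deg$ being a power of $2$ shows the construction terminates'' is not an argument, and termination is exactly the delicate point.

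What the paper does here is make the bookkeeping explicit. Writing $|B|=2n$ and inducting on $n$, it takes $\psi:t\mapsto t^{2}$ with $P=0$ and $Q=\infty$, partitions $B\setminus\{P,Q\}$ into the inert set $B_{\mathrm{in}}$ (which is then settled once and for all, since the residue degree $2$ persists under composition) and the totally split set $B_{\mathrm{ts}}$, and uses the global flip of Lemma~\ref{twistingdegreetwo} together with the pigeonhole principle to arrange $|B_{\mathrm{ts}}|\leq n-1$; the induction hypothesis is then invoked for the at most $2n-2$ rational points of $\psi^{-1}(B_{\mathrm{ts}})$, with $\psi^{-1}(P)$ and $\psi^{-1}(Q)$ as the new distinguished points, and the ramification indices multiply through the composite $\psi\circ\theta$. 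This explicit ``at most half of the remaining points split'' estimate, and the precise identification of the smaller instance to which the induction hypothesis is applied, are the missing steps in your proposal; without them the tower you describe is not known to terminate, so the proof is incomplete at its crucial point.
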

\begin{proof}
After adding rational points to $B$, we may assume that $|B|$ is even. Setting $|B|=2n$, we prove the statement by induction on $n$. The case where~$n=0$ being trivial, we may assume that $n>0$ and such a $\varphi$ exists for strictly lower~$n$. Fix distinct points~$P$ and $Q$ in~$B$ and, after choosing an automorphism of~$\mathbf{P}^1_k$, assume that~$P=0$ and $Q=\infty$. Then, define $\psi:\mathbf{P}^1_k\rightarrow\mathbf{P}^1_k$ by~$t\mapsto t^2$, so that $\psi$ is totally ramified above $P$ and $Q$. We let~$B_{\mathrm{in}}$ be the set of those~$m\in B\setminus\{P,Q\}$ such that $\psi^{-1}(m)$ is a point of degree $2$ over~$m$, and~$B_{\mathrm{ts}}$ those such that $\psi^{-1}(m)$ is made of two rational points, so that $|B_{\mathrm{in}}|+|B_{\mathrm{ts}}|=2n-2$. By~(i) and~(ii) of Lemma \ref{twistingdegreetwo}, we may assume that $|B_{\mathrm{ts}}|\leq n-1$. Then $\psi^{-1}(B_{\mathrm{ts}})$ is made of at most~$2n-2$ rational points of $\mathbf{P}^1_k$. By induction, there exists $\theta:\mathbf{P}^1_k\rightarrow\mathbf{P}^1_k$ such that~$\theta$ is totally ramified above $\psi^{-1}(P)$,~$\psi^{-1}(Q)$ and condition~(\ref{evencondition}) is verified for $\theta$ and all~$s\in\psi^{-1}(B_{\mathrm{ts}})$ and~$t\in\theta^{-1}(s)$. Thus, if $\varphi\coloneqq\psi\circ\theta$, the cover $\varphi$ is totally ramified above~$P$ and~$Q$. Moreover, for all $s\in B_{\mathrm{ts}}$ and $t\in\varphi^{-1}(s)$, condition (\ref{evencondition}) is verified, and for all~$s\in B_{\mathrm{in}}$ and $t\in\varphi^{-1}(s)$ we have $2\mid[\kappa(t):\kappa(s)]$, so that $\varphi$ is the sought cover.
%Then~$\varphi\coloneqq\psi\circ\theta$ is the sought cover.
\end{proof}

We now tackle the general case of Theorem \ref{unirationalconicbundles}.

\begin{proof}[Proof of Theorem \ref{unirationalconicbundles}]
Let us now choose $B$ as in (\ref{unirationalcondition}) and, up to enlarging $B$, we may assume that it is made of rational points, one point $P$ of degree $2$ and one point~$Q$ of odd degree. Corollary~\ref{correducingdegree} supplies a $2$-cover $\psi:\mathbf{P}^1_k\rightarrow\mathbf{P}^1_k$ such that~$\psi^{-1}(P)$ is a point of degree $4$. Using Lemma~\ref{twistingdegreetwo}, we may further assume that $\psi^{-1}(Q)$ is a point of degree $2$ over $Q$. Then, if~$B_{\mathrm{ts}}$ denotes those rational points $m$ of $B\setminus\{P,Q\}$ such that $\psi^{-1}(m)$ is made of two rational points, by $B_{\mathrm{in}}$ those for which~$\psi^{-1}(m)$ is a degree two point over $m$, condition~(\ref{evencondition}) is satisfied for~$s\in\{P,Q\}\cup B_{\mathrm{in}}$ and~$t\in\psi^{-1}(s)$. Furthermore, since $\psi^{-1}(B_{\mathrm{ts}})$ is made of rational points of~$\mathbf{P}^1_k$, Lemma \ref{killingrationalresidues} supplies a morphism $\theta:\mathbf{P}^1_k\rightarrow\mathbf{P}^1_k$ such that condition~(\ref{evencondition}) is verified for~$\theta$ and all~$s\in \psi^{-1}(B_{\mathrm{ts}})$ and~$t\in\theta^{-1}(s)$. By construction, the morphism~$\varphi\coloneqq\psi\circ\theta$ then satisfies condition~(\ref{evencondition}) for all $s\in B$ and $t\in \varphi^{-1}(s)$, so that $\varphi$ is the sought cover.
\end{proof}

\subsection{Proof of Theorem \ref{Requivalenceconicbundles}}\label{proofRequivalenceconicbundles}

Let us now prove Theorem \ref{Requivalenceconicbundles} using Corollary \ref{Requicrit}. If we fix distinct $P,Q\in\mathbf{P}^1(k)$, we then need to find $\varphi:\mathbf{P}^1_k\rightarrow\mathbf{P}^1_k$ such that
\begin{equation}\label{ratpointsinfibrescondition}
\varphi^{-1}(P)(k)\neq\emptyset\text{ and } \varphi^{-1}(Q)(k)\neq\emptyset
\end{equation}
and for any $s\in B$ and $t\in\varphi^{-1}(s)$, condition (\ref{evencondition}) holds.

We start proving the first case of (\ref{Requivalencecondition}), that is, we assume that $B$ is a union of rational points and one point~$m$ of degree $2$. Up to enlarging $B$, we may assume that $P,Q\in B$. Using Corollary \ref{correducingdegree}, there exists a degree $2$ cover $\psi:\mathbf{P}^1_k\rightarrow\mathbf{P}^1_k$ such that $\psi^{-1}(m)$ is a point of degree~$4$. After composing $\psi$ with an automorphism of $\mathbf{P}^1_k$, we may also assume that~$\psi$ is totally ramified above $Q$, and we let $\alpha\coloneqq\psi^{-1}(Q)$. Furthermore, we may assume that~$\psi^{-1}(P)$ contains a rational point. Indeed, if $\psi$ is totally ramified above $P$, then~$\psi^{-1}(P)$ is a rational point, and otherwise, by Lemma~\ref{twistingdegreetwo}, we may assume that~$\psi^{-1}(P)$ is made of two rational points. We thus denote by $\beta$ a rational point of~$\psi^{-1}(P)$. Then, Lemma~\ref{killingrationalresidues} supplies a cover $\theta:\mathbf{P}^1_k\rightarrow\mathbf{P}^1_k$ that is totally ramified above $\alpha$ and $\beta$ and such that condition~(\ref{evencondition}) is satisfied for all $s\in\mathbf{P}^1(k)\cap\psi^{-1}(B\setminus\{P,Q,m\})$ and~$t\in\theta^{-1}(s)$. If we set~$\varphi\coloneqq\psi\circ\theta$, by construction, it verifies condition~(\ref{evencondition}), and it is totally ramified above~$P$ and~$Q$, so that it also satisfies (\ref{ratpointsinfibrescondition}).

We now prove the second case of (\ref{Requivalencecondition}) where $B$ is assumed to be a union of rational points and one point $m$ of odd degree. Again, after enlarging~$B$ and using an automorphism of $\mathbf{P}^1_k$, we may assume that $B$ contains $P$ and $Q$, that~$P=0$ and~$Q=\infty$. We set $\psi:\mathbf{P}^1_k\rightarrow\mathbf{P}^1_k$ the degree $2$ cover defined by $t\mapsto t^2$, which is totally ramified above~$P$ and~$Q$. In the case where $m\not\in\{P,Q\}$, using Lemma~\ref{twistingdegreetwo} we make the additional assumption that $\psi^{-1}(m)$ is a point of degree $2$ over~$m$. Now, Lemma \ref{killingrationalresidues} supplies a cover $\theta:\mathbf{P}^1_k\rightarrow\mathbf{P}^1_k$ that is totally ramified above $\psi^{-1}(P)$ and~$\psi^{-1}(Q)$, and such that condition~(\ref{evencondition}) is satisfied for all $s\in\mathbf{P}^1(k)\cap\psi^{-1}(B\setminus\{P,Q,m\})$ and~$t\in\theta^{-1}(s)$. Then, $\psi\circ\theta$ satisfies condition (\ref{evencondition}) and it is totally ramified above $P$ and $Q$, hence it satisfies condition (\ref{ratpointsinfibrescondition}).

\section{Sufficiency of the Brauer-Manin obstruction}\label{sectionBMcarp}

In this section, when $\mathbf{F}$ is a finite field, we prove that the unirationality of conic bundles over $\mathbf{P}^1_{\mathbf{F}}$ is implied by an analogue of a conjecture of Colliot-Thélène and Sansuc in positive characteristic.

The following conjecture is an analogue of a conjecture of Colliot-Thélène and Sansuc, stated as an open question over number fields in \cite{MR605344} (see also Colliot-Thélène's conjecture in \cite[Conjecture 14.1.2]{MR4304038}).

\begin{conj}\label{BMcarp}
Let $C$ be a nice curve a finite field~$\mathbf{F}$ and $K$ its function field. If $X$ is a proper, smooth, geometrically integral and separably rationally connected surface over~$K$, then~$X$ verifies (BM).
\end{conj}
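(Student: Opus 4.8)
Since Conjecture~\ref{BMcarp} is the function-field analogue of the Colliot-Thélène–Sansuc conjecture, I would not expect an unconditional proof; what follows is the strategy I would pursue. The plan is first to reduce to a concrete geometric model. Over $K=\mathbf{F}(C)$ a smooth, proper, geometrically integral and separably rationally connected surface is geometrically rational, so by the classification of minimal geometrically rational surfaces recalled in the introduction one may pass to a minimal model that is either a del Pezzo surface or a conic bundle over $\mathbf{P}^1_K$. Since the property (BM) depends only on the birational class among smooth proper models (the Brauer group, and hence the Brauer-Manin set, being a birational invariant in this setting), it suffices to treat each minimal type separately, and the conic bundle case is the essential one both for the difficulty and for feeding the applications of this paper.

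For a conic bundle $f:X\to\mathbf{P}^1_K$ the natural tool is the \emph{fibration method} over the base $\mathbf{P}^1_K$, in the spirit of Colliot-Thélène, Sansuc and Swinnerton-Dyer. Given an adelic point of $X$ orthogonal to $\br(X)$, I would try to produce a closed point $t\in\mathbf{P}^1(K)$, approximating the prescribed local behaviour on the base, whose fibre $X_t$ is a smooth conic that is everywhere locally soluble. Properness of a fixed model together with Hensel's lemma then lifts nearby adelic points of $X$ to adelic points of $X_t$, and a smooth conic over the global field $K$ satisfies the local-global principle, since its class in $\br(K)[2]$ obeys the reciprocity law; hence $X_t(K)\neq\emptyset$ and any such rational point approximates the given adelic point. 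Splitness of the fibres, and therefore local solubility, is detected exactly by the $2$-torsion residues $r_P$ of $(\ref{residuemap})$, as in Lemma~\ref{sectionsplit}; the global ingredients one needs, namely reciprocity for $\coh^1(\kappa(P),\mathbf{Z}/2\mathbf{Z})$-valued residues and finiteness of the relevant Brauer group, would be furnished by class field theory for $K$ and by Proposition~\ref{brauerfini}.

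The main obstacle, and the reason this remains a conjecture, is precisely the step of choosing $t$ while \emph{respecting the Brauer-Manin constraint}: one must show that the only obstruction to finding such a $t$ with everywhere locally soluble fibre is carried by $\br(X)$ itself. This requires a precise understanding of how the vertical classes pulled back from $\mathbf{P}^1_K$ and the horizontal residues interact at the non-split fibres of $f$, and over number fields it is exactly here that Schinzel's hypothesis, or the unconditional additive-combinatorics and descent techniques of Harari and Harpaz–Wittenberg, intervene. Over $K=\mathbf{F}(C)$ one would hope to substitute a function-field analogue of these sieve results, but controlling the place above the characteristic and the ramification at the bad fibres (where the residue formalism of $\S\ref{subsecCB}$ already demands residue characteristic different from $2$) is where a complete argument is currently out of reach. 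The realistic target is therefore the conjecture for conic bundles with few non-split fibres, which already suffices to feed Theorem~\ref{thmKOLLARPOINTS} and Corollary~\ref{fromconj2toconj1}.
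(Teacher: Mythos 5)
The statement you were asked to prove is Conjecture~\ref{BMcarp}: the paper does not prove it, and offers no proof of it anywhere. It is stated as an open analogue of the Colliot-Thélène--Sansuc conjecture over number fields and is used purely as a \emph{hypothesis} in Theorem~\ref{thmKOLLARPOINTS} and Corollary~\ref{fromconj2toconj1} (which deduce unirationality and triviality of $R$-equivalence for conic bundles over finite fields \emph{assuming} the conjecture). So there is no proof in the paper to compare your attempt against, and your proposal, as you yourself say, is not a proof but a strategy sketch.

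As a sketch, what you write is reasonable and correctly locates the difficulty. The reduction to minimal models via birational invariance of (BM) among smooth proper surfaces is sound, and the fibration-method outline for the conic bundle case (find $t\in\mathbf{P}^1(K)$ with everywhere locally soluble fibre, then use the local-global principle for conics over the global field $K$) is the standard plan. But the load-bearing step --- producing such a $t$ compatible with the given adelic point and showing that the only obstruction to its existence is accounted for by $\br(X)$ --- is exactly the open problem; over number fields it requires Schinzel-type hypotheses or the Harpaz--Wittenberg machinery, and no function-field substitute is supplied here. Two smaller cautions: the del Pezzo cases are not automatic either (degrees $1$--$4$ are open over global fields in general), and geometric rationality of an SRC surface uses Castelnuovo's criterion over $\overline{K}$, which is fine but worth stating. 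In short: you have correctly identified the statement as conjectural and outlined a plausible attack, but there is no complete argument, and none exists in the paper.
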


This section is dedicated to the following theorem on the existence of curves passing through a given set of rational points, for a conic bundle over $\mathbf{P}^1_{\mathbf{F}}$.

\begin{thm}\label{thmKOLLARPOINTS}
Let $\mathbf{F}$ be a finite field, $f:X\rightarrow\mathbf{P}^1_{\mathbf{F}}$ a regular conic bundle. Let $C$ be a nice curve over $\mathbf{F}$ with function field $K$ and assume that $X_K$ verifies (BM). Then the following assertions hold.
\begin{enumerate}[label=(\arabic*)]
\item There exists a morphism $g:C\rightarrow X$ such that $X(\mathbf{F})\subset g(C)$.
\item If the characteristic of $\mathbf{F}$ is odd, then for all~$A\subset X(\mathbf{F})$ with $|A|\leq |C(\mathbf{F})|$, there exists $g:C\rightarrow X$ such that $A\subset g(C(\mathbf{F}))$.
\end{enumerate}
\end{thm}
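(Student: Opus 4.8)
The plan is to reformulate both assertions as statements about sections of the constant model $\mathscr{X}\coloneqq X\times_{\mathbf{F}}C\to C$, a smooth proper model of $X_K$ over $C$. By the valuative criterion of properness a $K$-point of $X_K$ is the same as a section of $\mathscr{X}\to C$, and such a section is the same as a morphism $g\colon C\to X$ (compose with the projection $\mathscr{X}\to X$); I will use this dictionary throughout. The key geometric observation is that if a section agrees with the constant section $s_P\colon c\mapsto(P,c)$ through a point $P\in X(\mathbf{F})$ to first order at a closed point $c$ of $C$, then $g(c)=P$, so $P\in g(C)$, and $P\in g(C(\mathbf{F}))$ as soon as $c$ is rational. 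Hence, after choosing pairwise distinct places $c_P$ (arbitrary closed points for~(1), rational points for~(2), which exist since $|A|\le|C(\mathbf{F})|$), it suffices to produce a $K$-point of $X_K$ lying in the neighbourhood $W_{\{c_P\},1}$ of Reminder~\ref{remiAPPROX} attached to the constant local points $(P)_{c_P}\in X_K(K_{c_P})$.

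To produce such a $K$-point I will invoke the hypothesis that $X_K$ verifies (BM): it is enough to exhibit an adelic point $(x_v)\in X_K(K_\Omega)^{\br(X_K)}$ with $x_{c_P}=(P)_{c_P}$ at each chosen place, and then use density of $X_K(K)$ in the Brauer–Manin set together with Reminder~\ref{remiAPPROX}. The pairing factors through $\br(X_K)/\br(K)$, since constant classes pair trivially with every adelic point by global reciprocity in $\br(K)$, and this quotient is finite and $2$-primary by Propositions~\ref{brauerfini} and~\ref{2primaryCB}. Fixing some $P_0\in X(\mathbf{F})$, the constant adelic point $(P_0)_v$ is a genuine $K$-point, hence lies in the Brauer–Manin set; I take it as background and modify it to $(P)_{c_P}$ at each $c_P$. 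Writing $\beta_P(\alpha)\in\br(K)$ for the evaluation of $\alpha\in\br(X_K)$ at the constant $K$-point through $P$, the resulting defect of orthogonality is $\sum_P\bigl((\beta_P(\alpha))_{c_P}-(\beta_{P_0}(\alpha))_{c_P}\bigr)$, a finite sum of local invariants of global classes.

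For assertion~(1) the $c_P$ are unconstrained, so I choose them outside the finite set of places where any $\beta_P(\alpha)$ is ramified, for $\alpha$ ranging over a finite set of generators of $\br(X_K)/\br(K)$; then every local invariant above vanishes, the modified adelic point is orthogonal to $\br(X_K)$, and (BM) concludes. The main obstacle is assertion~(2): there the $c_P$ must be rational and cannot in general be moved off the bad loci, so the residual Brauer defect must instead be cancelled by prescribing the coordinates $x_w$ at auxiliary \emph{higher-degree} places. This amounts to realising, at such $w$, any prescribed $2$-torsion local invariant $\inv_w(x_w^*\alpha)\in\tfrac12\mathbf{Z}/\mathbf{Z}$ through a suitable choice of local point on the conic fibre over $w$, the solvability condition being exactly global reciprocity. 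Here the oddness of the characteristic is essential: it guarantees that the relevant fibres are smooth conics carrying enough local points to realise both invariants, and that the rational fibres used in~(2) carry rational points to approximate with. Once these compensating local points are fixed, the adjusted adelic point lies in $X_K(K_\Omega)^{\br(X_K)}$, and density together with Reminder~\ref{remiAPPROX} yields the desired $g\colon C\to X$ with $A\subset g(C(\mathbf{F}))$.
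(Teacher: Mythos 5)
Your argument for assertion~(1) is essentially the paper's: the paper first deduces weak weak approximation from (BM) (Proposition~\ref{WWA}, via finiteness of $\br(X_K)/\br(K)$ and openness of the Brauer--Manin set), and then prescribes the section at finitely many closed points away from the bad set $S$ using Reminder~\ref{remiAPPROX}; your hands-on version, which modifies the constant adelic point through $P_0$ at places where the global classes $\beta_P(\alpha)$ are unramified, proves the same thing and is fine.

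For assertion~(2), however, there is a genuine gap. You correctly identify that the places $c_P$ must now be rational and cannot be moved off the bad loci, so a residual Brauer defect $d(\alpha)=\sum_P\bigl(\inv_{c_P}\beta_P(\alpha)-\inv_{c_P}\beta_{P_0}(\alpha)\bigr)$ may appear, but your proposed cure --- absorbing $d$ by prescribing local points at auxiliary higher-degree places $w$ --- does not work. For any fixed $\alpha\in\br(X_K)$, the class spreads out to $\br(X\times U)$ for some open $U\subset C$, and since $X\times C\to C$ is proper every $x_w\in X(K_w)$ extends to an $\mathscr{O}_w$-point of the model; hence for all $w\in U$ the evaluation $x_w^*\alpha$ lies in $\br(\mathscr{O}_w)=0$ and the evaluation map on $X(K_w)$ is \emph{identically zero}. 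So auxiliary places outside a fixed finite set can absorb nothing, and at the finitely many remaining places you give no argument that the evaluation maps realise an arbitrary homomorphism $\br(X_K)/\br(K)\to\tfrac12\mathbf{Z}/\mathbf{Z}$; invoking ``global reciprocity'' as the solvability condition is not a proof. The paper instead shows that the defect is always zero: Lemma~\ref{BMchangementdebase} proves that $\br(X\times_F C)\to\br(X_K)/\br(K)$ is surjective (using Proposition~\ref{2primaryCB}, a Gysin diagram, and simple connectedness of the geometric fibres), whence by the same properness argument as above every class pairs trivially with every adelic point (Proposition~\ref{BMzero}); (BM) then yields genuine weak approximation at \emph{all} places, and one prescribes the section at rational points of $C$ directly. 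Note also that the role of odd characteristic is not, as you suggest, to supply local points on smooth conic fibres (those exist anyway, cf.\ Reminder~\ref{RequivCONIC}); it is needed for the $2$-torsion statement of Proposition~\ref{2primaryCB} and for the residue/purity sequences with $\ell=2$ invertible.
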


We split the proof of Theorem \ref{thmKOLLARPOINTS} into two parts. In \S\ref{uniratCONJ} we give a proof of~(1) of Theorem \ref{thmKOLLARPOINTS} and in \S\ref{subsecTRIVREQU}, we show (2) of Theorem \ref{thmKOLLARPOINTS}. Let us now deduce the following immediate corollary on unirationality and triviality of $R$-equivalence for conic bundles.

\begin{cor}\label{fromconj2toconj1}
Let $\mathbf{F}$ be a finite field and $f:X\rightarrow\mathbf{P}^1_{\mathbf{F}}$ a regular conic bundle. Assuming that Conjecture \ref{BMcarp} is true, the following assertions hold.
\begin{enumerate}[label=(\alph*)]
\item The variety $X$ is unirational.
\item If the characteristic of $\mathbf{F}$ is odd, then all two points of $X(\mathbf{F})$ are directly $R$-equivalent.
\end{enumerate}
\end{cor}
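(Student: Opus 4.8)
The plan is to derive Corollary \ref{fromconj2toconj1} directly from Theorem \ref{thmKOLLARPOINTS}, treating the latter as a black box. The hypothesis of Conjecture \ref{BMcarp} must be checked before Theorem \ref{thmKOLLARPOINTS} applies: for a suitable nice curve $C/\mathbf{F}$ with function field $K$, the base change $X_K$ of the conic bundle must be proper, smooth, geometrically integral and separably rationally connected over $K$. Since $X$ is a regular conic bundle over $\mathbf{P}^1_{\mathbf{F}}$, its generic fibre in the appropriate direction is geometrically a rational surface, hence separably rationally connected; so assuming Conjecture \ref{BMcarp}, the property (BM) holds for $X_K$ and Theorem \ref{thmKOLLARPOINTS} is available.

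For part (a), the approach is to apply assertion (1) of Theorem \ref{thmKOLLARPOINTS}. This produces a morphism $g:C\rightarrow X$ whose image contains all of $X(\mathbf{F})$; in particular $f\circ g:C\rightarrow\mathbf{P}^1_{\mathbf{F}}$ is a nonconstant (hence dominant) morphism from a curve, witnessing that a rational curve sweeps out a general fibre of $f$. To conclude unirationality I would invoke the Enriques criterion of Proposition \ref{enriques}: concretely, one wants a rational map $\mathbf{P}^1_{\mathbf{F}}\dashrightarrow X$ with $f\circ g$ dominant, matching assertion (ii) of that proposition. The one subtlety is that $C$ need not itself be $\mathbf{P}^1_{\mathbf{F}}$, so I would choose $C\simeq\mathbf{P}^1_{\mathbf{F}}$ at the outset (it is a nice curve with function field $\mathbf{F}(t)$, and $X(\mathbf{F})$ is finite so it can be covered by the image of a single $\mathbf{P}^1$ by part (1)), after which Proposition \ref{enriques} yields unirationality immediately.

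For part (b), assuming the characteristic of $\mathbf{F}$ is odd, I would apply assertion (2) of Theorem \ref{thmKOLLARPOINTS}. Given two points $x_0,x_1\in X(\mathbf{F})$, set $A=\{x_0,x_1\}$; choosing $C\simeq\mathbf{P}^1_{\mathbf{F}}$ we have $|C(\mathbf{F})|=|\mathbf{F}|+1\geq 2=|A|$, so the cardinality constraint $|A|\leq|C(\mathbf{F})|$ is satisfied. Theorem \ref{thmKOLLARPOINTS}(2) then supplies $g:\mathbf{P}^1_{\mathbf{F}}\rightarrow X$ with $\{x_0,x_1\}\subset g(\mathbf{P}^1(\mathbf{F}))$, which is precisely the definition of $x_0$ and $x_1$ being directly $R$-equivalent. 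Since $x_0,x_1$ were arbitrary, all pairs of rational points are directly $R$-equivalent.

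The proof is therefore essentially a packaging step: the real content lives in Theorem \ref{thmKOLLARPOINTS} and in the verification that separable rational connectedness holds so Conjecture \ref{BMcarp} applies. I do not expect a genuine obstacle here, but the point requiring the most care is ensuring the hypotheses of Theorem \ref{thmKOLLARPOINTS} are literally met—namely that one may take $C=\mathbf{P}^1_{\mathbf{F}}$ and that $X_{\mathbf{F}(t)}$ is separably rationally connected (using that over a perfect field a geometrically rational, regular conic bundle surface is separably rationally connected, so that the hypothesis of Conjecture \ref{BMcarp} is genuinely available).
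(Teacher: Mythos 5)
Your proposal follows essentially the same route as the paper: take $C=\mathbf{P}^1_{\mathbf{F}}$, invoke Conjecture \ref{BMcarp} to get (BM) for $X_{\mathbf{F}(t)}$, and then apply assertions (1) and (2) of Theorem \ref{thmKOLLARPOINTS} together with Proposition \ref{enriques}. The one step you assert without justification is that $f\circ g$ is nonconstant: the image of $g$ containing $X(\mathbf{F})$ only forces this if $X(\mathbf{F})$ meets at least two distinct fibres of $f$, which is not automatic from the statement of Theorem \ref{thmKOLLARPOINTS}(1). The paper secures exactly this point by observing (Reminder \ref{RequivCONIC}(ii), using $\cd(\mathbf{F})\leq 1$) that the conics $X_0$ and $X_1$ each have an $\mathbf{F}$-point, so one can choose $x\in X_0(\mathbf{F})$ and $y\in X_1(\mathbf{F})$ lying in distinct fibres; with that supplement your argument for (a) is complete, and your argument for (b) matches the paper's verbatim.
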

\begin{proof}
Let $C\coloneqq\mathbf{P}^1_{\mathbf{F}}$ and $K\coloneqq\mathbf(F)(C)$. Since Conjecture \ref{BMcarp} is true, the variety $X_K$ verifies~(BM).

Let us first prove (a). Since $\mathbf{F}$ is finite, it has cohomological dimension one, so that~$X_0(\mathbf{F})$ and~$X_1(\mathbf{F})$ are nonempty by Reminder \ref{RequivCONIC}. Choose $x\in X_0(\mathbf{F})$ and $y\in X_1(\mathbf{F})$. We then apply (1) of Theorem \ref{thmKOLLARPOINTS} with $C\coloneqq\mathbf{P}^1_{\mathbf{F}}$ to $f$. This supplies a morphism $g: \mathbf{P}^1_F\rightarrow X$ such that $x,y\in g(\mathbf{P}^1_{\mathbf{F}})$. Since $x$ and $y$ lie in distinct fibres of $f$, the morphism~$f\circ g$ is dominant, which implies that~$X$ is unirational by Proposition \ref{enriques}.

Let us now prove (b). If $x,y\in X(\mathbf{F})$ are distinct, then (2) of Theorem \ref{thmKOLLARPOINTS} applied to~$f$ and $C\coloneqq\mathbf{P}^1_{\mathbf{F}}$ supplies $g:\mathbf{P}^1_{\mathbf{F}}\rightarrow X$ such that $x,y\in g(\mathbf{P}^1(\mathbf{F}))$. In other words, $x$ and $y$ are directly $R$-equivalent.
\end{proof}

\subsection{Unirationality}\label{uniratCONJ}

\begin{prop}\label{WWA}
Let $\mathbf{F}$ be a finite field, $C$ a nice curve over $\mathbf{F}$ and $K$ its function field. Let also $X\rightarrow\mathbf{P}^1_{\mathbf{F}}$ be a regular conic bundle. If $X_K$ verifies (BM), then $X_K$ has weak weak approximation.
\end{prop}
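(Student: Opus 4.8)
The plan is to run the standard mechanism ``finite Brauer group $+$ good reduction $+$ reciprocity'', taking advantage of the fact that $X_K$ is a \emph{constant} surface: since $\mathbf{F}$ is perfect the regular surface $X$ is smooth over $\mathbf{F}$, so $\mathscr{X}\coloneqq X\times_{\mathbf{F}}C\to C$ is a smooth proper model of $X_K$ over \emph{all} of $C$. First I would assemble two inputs. Because $X_K$ is geometrically rational one has $\br(\overline{X_K})=0$, so $\br(X_K)$ is entirely algebraic and $\br(X_K)/\br(K)$ is finite by Proposition~\ref{brauerfini}; I fix representatives $\alpha_1,\dots,\alpha_n$. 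A standard spreading-out argument then shows each $\alpha_i$ extends to $\br(\mathscr{X}_{U_i})$ for a dense open $U_i\subseteq C$; setting $U=\bigcap_i U_i$ and $S\coloneqq C\setminus U$, all the $\alpha_i$ extend over $\mathscr{X}_U$ and $S$ is a finite set of places, which will be the exceptional set for weak weak approximation.

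Next I would establish that the Brauer evaluation is trivial off $S$. For $v\notin S$ and any $x_v\in X_K(K_v)$, properness of $\mathscr{X}$ lets me extend $x_v$ to a $\widehat{\mathscr{O}_v}$-point of $\mathscr{X}$; pulling back $\alpha_i\in\br(\mathscr{X}_U)$ gives a class in $\br(\widehat{\mathscr{O}_v})$, which vanishes since $\widehat{\mathscr{O}_v}$ is a henselian discrete valuation ring with finite residue field $\kappa(v)$ and $\br(\kappa(v))=0$. Hence $\inv_v(x_v^*(\alpha_i))=0$ for every $v\notin S$ and every $i$.

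The heart of the argument is to promote an arbitrary $(x_v)_{v\notin S}\in X_K(K_\Omega^S)$ to a global adelic point lying in the Brauer--Manin set. Every conic over $\mathbf{F}$ has an $\mathbf{F}$-point ($\mathbf{F}$ is a $C_1$ field in the smooth case, while a non-split singular conic has a rational singular point), so $X(\mathbf{F})\neq\emptyset$; I fix $P_0\in X(\mathbf{F})$, a constant point of $X_K(K_v)$ for all $v$, and set $y_v=x_v$ for $v\notin S$ and $y_v=P_0$ for $v\in S$. For each $i$ the class $\beta_i\coloneqq P_0^*(\alpha_i)\in\br(K)$ is constant, and the same local computation (now applied to the everywhere-defined constant section $P_0$) gives $\inv_v(\beta_i)=0$ for $v\notin S$, so that $\sum_{v\in S}\inv_v(\beta_i)=\sum_{v\in\Omega_K}\inv_v(\beta_i)=0$ by the reciprocity law for the global function field $K$. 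Combining this with the vanishing off $S$ yields $\sum_{v}\inv_v(y_v^*(\alpha_i))=0$; since adding a constant class from $\br(K)$ contributes only $\sum_v\inv_v$ of a class in $\br(K)$, again zero by reciprocity, the point $(y_v)$ is orthogonal to all of $\br(X_K)$, i.e.\ $(y_v)\in X_K(K_\Omega)^{\br(X_K)}$.

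Finally I would invoke hypothesis (BM): $X_K(K)$ is dense in $X_K(K_\Omega)^{\br(X_K)}$. Given any finite $T\subseteq\Omega_K\setminus S$ and prescribed open neighbourhoods of $x_v$ for $v\in T$, approximating $(y_v)$ at the places of $T$ produces a rational point matching $x_v$ there; as $T$ and the neighbourhoods are arbitrary, this is exactly the density of $X_K(K)$ in $X_K(K_\Omega^S)$, proving weak weak approximation with exceptional set $S$. The main obstacle is precisely the third paragraph: ensuring that the completed adelic point $(y_v)$ lands in the Brauer--Manin set. This is the only step that uses the arithmetic of $K$ rather than purely local input, and it is where the \emph{constant} nature of $X_K$ is decisive, supplying both the global point $P_0$ and the good model over all of $C$ needed to invoke reciprocity to cancel the contribution at $S$.
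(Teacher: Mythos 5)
Your proof is correct, but it follows a genuinely different route from the one in the paper. The paper's argument is purely topological: after noting $X(K)\neq\emptyset$ and invoking the finiteness of $\br(X_K)/\br(K)$ (Corollary \ref{finitebrauerSB}), it observes that the Brauer--Manin set is a \emph{finite} intersection of sets of the form $X(K_\Omega)^b$, each of which is open; a nonempty open subset of the product $X(K_\Omega)$ necessarily projects \emph{onto} $\prod_{v\notin S}X(K_v)$ for some finite $S$, and density is preserved under this open surjection, so (BM) immediately gives weak weak approximation. No good reduction, no spreading out, no reciprocity. Your argument instead runs the classical explicit mechanism: spread the finitely many representatives out over $\mathscr{X}_U$, kill the local invariants off $S=C\setminus U$ by evaluating over $\widehat{\mathscr{O}_v}$, and complete an arbitrary point of $X(K_\Omega^S)$ to an adelic point in the Brauer--Manin set by inserting the constant point $P_0$ at the places of $S$ and cancelling its contribution via reciprocity for the global function field $K$. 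Both are valid; what yours buys is an explicit exceptional set (the non-extension locus of the Brauer representatives) and, more importantly, it is exactly the computation that, pushed slightly further using the fact that the constant model $X\times C$ is good over \emph{all} of $C$, yields the stronger Proposition \ref{BMzero} (vanishing of the whole pairing, hence weak approximation in odd characteristic). What the paper's softer argument buys is brevity and independence from the constant structure of the model: it would apply verbatim to any smooth proper $K$-surface with finite $\br/\br(K)$ and nonempty Brauer--Manin set. One small economy you could make: the hypothesis (BM) already presupposes you will produce a point of $X_K(K_\Omega)^{\br(X_K)}$, and the paper gets its nonempty open set without exhibiting one explicitly, whereas your third paragraph is doing real work to manufacture it; this is the honest price of the explicit approach and you have identified it correctly as the crux.
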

Although the proof of Proposition \ref{WWA} relies on classical arguments, it makes use of the finiteness of $\br(X_K)/\br(K)$, for which we supply a proof in Appendix \ref{finiteBRAUER}.
\begin{proof}
First note that $X(\mathbf{F})\neq\emptyset$ since all fibres over a point of $\mathbf{P}^1(\mathbf{F})$ has a rational point by (ii) of Reminder \ref{RequivCONIC}, so that $X(K)\neq\emptyset$. By Corollary \ref{finitebrauerSB}, the group $\br(X_K)/\br(K)$ is finite, and we set $B\subset \br(X_K)$ a finite set of representatives. Then $X(K_{\Omega})^{\br(X)}=\bigcap_{b\in B}X(K_{\Omega})^b$. Since each $X(K_{\Omega})^b$ is open in $X(K_{\Omega})$ by \cite[Corollary 8.2.11.(b)]{MR3729254}, so is $X(K_{\Omega})^{\br(X)}$. As $X(K_{\Omega})^{\br(X)}\neq\emptyset$, this means that there exists a finite $S\subset\Omega_K$ such that the projection map $X(K_{\Omega})^{\br(X)}\rightarrow\prod_{v\in\Omega_K\setminus S}X(K_v)$ is surjective. Since~$X(K)$ is dense in $X(K_{\Omega})^{\br(X)}$, it is also dense in~$\prod_{v\in\Omega_K\setminus S}X(K_v)$.
\end{proof}

%This subsection is dedicated to the proof of (1) of Theorem \ref{thmKOLLARPOINTS}.

%Let $\mathbf{F}$ be a finite field, $C$ a proper, smooth, geometrically integral curve over~$\mathbf{F}$ and~$K$ its function field. The proof of Theorem \ref{fromconj2toconj1} relies on the following statement.

Let us now supply a proof of (1) of Theorem \ref{thmKOLLARPOINTS}.

\begin{proof}[Proof of (1) of Theorem \ref{thmKOLLARPOINTS}]
Let us use the notations of the statement. We denote by $\rho:X\times C\rightarrow C$ the projection morphism and we fix $x\in X(\mathbf{F})$. By Proposition \ref{WWA}, the variety $X_K$ has weak weak approximation, so that we can fix a finite set $S\subset \Omega_K$ such that~$X(K)$ is dense in $X(K_{\Omega}^S)$. Let us now choose $T\subset \Omega_K\setminus S$ satisfying $|T|=|X(\mathbf{F})|$, and we fix a bijection $T\rightarrow X(\mathbf{F})$ written as $v\mapsto x_v$. We set $(P_v)\in X(K_{\Omega}^S)$ defined as $P_v\coloneqq x_v$ for $v\in T$ and $P_v\coloneqq x$ for $v\in\Omega_K\setminus(S\cup T)$. Using the notations of Reminder \ref{remiAPPROX}, we then consider the nonempty open subset $W_{T,1}$ of $X(K_{\Omega}^S)$ associated to the tuple $(P_v)$ and the model $\rho$ of $X$. Since $X(K)$ is dense in $X(K_{\Omega}^S)$, we may pick $P\in X(K)\cap W_{T,1}$. Then, using notations of \S\ref{rappelapproximation}, the $K$-point $P$ extends to a section $j:C\rightarrow X\times C$ of $\rho$ that coincides with $\widehat{P_v}$ on $\spec(\kappa(v))$ for $v\in T$. In particular, if $g:C\rightarrow X$ is the first coordinate of~$j$, then for each $v\in T$ we have an equality of set theoretical points $g(v)=x_v$, that is, the sets $g(T)$ and $X(\mathbf{F})$ are the same.
\end{proof}

\subsection{Triviality of $R$-equivalence}\label{subsecTRIVREQU}

\begin{prop}\label{BMzero}
Let $\mathbf{F}$ be a finite field of odd characteristic, $C$ a nice curve over $\mathbf{F}$ and~$K$ its function field. If $f:X\rightarrow\mathbf{P}^1_{\mathbf{F}}$ is a regular conic bundle, then the Brauer-Manin pairing~$\langle\cdot,\cdot\rangle_{BM}$ on $X_K$ is identically zero. In particular, if $X_K$ verifies (BM), then $X_K$ has weak approximation.
\end{prop}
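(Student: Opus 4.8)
The plan is to prove the stronger statement that for every $\alpha\in\br(X_K)$ and every place $v\in\Omega_K$ the local evaluation $x_v\mapsto\inv_v(x_v^*(\alpha))$ is \emph{constant} on $X(K_v)$; the vanishing of the global pairing then follows by comparison with a single global point. First I would reduce the coefficients: by Proposition \ref{2primaryCB} the group $\br(X_K)/\br(K)$ is $2$-torsion, so for every prime $\ell\neq2$ the $\ell$-primary part $\br(X_K)\{\ell\}$ coincides with $\br(K)\{\ell\}$ and is therefore constant. Writing $\alpha=\alpha_2+\beta$ with $\alpha_2\in\br(X_K)\{2\}$ and $\beta\in\br(K)$, the constant summand contributes $x_v^*(\beta)=\res_{K_v/K}(\beta)$, which does not depend on $x_v$; hence the whole problem reduces to the $2$-primary class $\alpha_2$.

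For $\alpha_2$ I would work with the smooth proper model $\mathscr{X}_v\coloneqq X\times_{\mathbf{F}}\mathscr{O}_v$ of $X_{K_v}$ over $\mathscr{O}_v$, whose special fibre is the smooth divisor $D_v=X\times_{\mathbf{F}}\kappa(v)$ (here $X$ is regular, hence smooth over the perfect finite field $\mathbf{F}$). By the valuative criterion of properness any $x_v\in X(K_v)$ extends to a section $\spec(\mathscr{O}_v)\to\mathscr{X}_v$ with reduction $\bar{x}_v\in X(\kappa(v))$, and the compatibility of residues under this section (the relative analogue of the functoriality recalled around diagram (\ref{residuesquare}), see \cite[\S1.4]{MR4304038}) gives, under the canonical identification $\coh^1(\kappa(v),\mathbf{Q}_2/\mathbf{Z}_2)=\mathbf{Q}_2/\mathbf{Z}_2$ for the finite field $\kappa(v)$,
$$
\inv_v\big(x_v^*(\alpha_2)\big)=\bar{x}_v^*\big(\partial_{D_v}(\alpha_2)\big),
$$
where $\partial_{D_v}\colon\br(X_{K_v})\{2\}\to\coh^1(D_v,\mathbf{Q}_2/\mathbf{Z}_2)$ is the residue along $D_v$. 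The crucial point, and the main obstacle, is that $\partial_{D_v}(\alpha_2)$ is a \emph{constant} torsor: since $X_{\kappa(v)}$ is a smooth proper geometrically rational, hence geometrically simply connected, surface, one has $\coh^1(X_{\overline{\kappa(v)}},\mathbf{Q}_2/\mathbf{Z}_2)=0$, so the Hochschild--Serre spectral sequence yields $\coh^1(X_{\kappa(v)},\mathbf{Q}_2/\mathbf{Z}_2)=\coh^1(\kappa(v),\mathbf{Q}_2/\mathbf{Z}_2)$. Therefore $\partial_{D_v}(\alpha_2)$ is pulled back from $\spec(\kappa(v))$, its value $\bar{x}_v^*(\partial_{D_v}(\alpha_2))$ is independent of the reduction $\bar{x}_v$, and $\inv_v(x_v^*(\alpha_2))$ is constant in $x_v$.

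Granting this local constancy, I would conclude as follows. Since $\mathbf{F}$ is finite, each fibre of $f$ over a point of $\mathbf{P}^1(\mathbf{F})$ has a rational point by (ii) of Reminder \ref{RequivCONIC}, so $X(K)\neq\emptyset$; fix $x\in X(K)$ and consider the adelic point $(x_{K_v})_v$ it induces. Then $x_{K_v}^*(\alpha)=\res_{K_v/K}(x^*(\alpha))$ with $x^*(\alpha)\in\br(K)$, so $\langle(x_{K_v}),\alpha\rangle_{BM}=\sum_{v}\inv_v(\res_{K_v/K}(x^*(\alpha)))=0$ by the reciprocity law of the global field $K$. By the local constancy established above, $\langle(x_v),\alpha\rangle_{BM}=\langle(x_{K_v}),\alpha\rangle_{BM}=0$ for every $(x_v)\in X(K_\Omega)$, which proves that the pairing is identically zero. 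In particular $X(K_\Omega)^{\br(X_K)}=X(K_\Omega)$, so if $X_K$ verifies (BM) then $X(K)$ is dense in $X(K_\Omega)$, i.e.\ $X_K$ has weak approximation.
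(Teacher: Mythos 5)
Your proof is correct, but it reorganises the argument differently from the paper, so a comparison is worthwhile. The paper first establishes a global statement, Lemma \ref{BMchangementdebase}: using Proposition \ref{2primaryCB}, a Gysin sequence over $C$ and the simple connectedness of the geometric fibre $X_{\overline{\kappa(P)}}$, it shows that $\br(X\times_{\mathbf{F}}C)\rightarrow\br(X_K)/\br(K)$ is surjective. It then writes $\alpha=\beta+\gamma$ with $\beta$ extending to the constant model $X\times C$ and $\gamma\in\br(K)$, kills $\beta$ at every place because $x_v^*\circ h^*$ factors through $\br(\mathscr{O}_v)=0$, and kills $\gamma$ by reciprocity. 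You instead work place by place: the same two inputs (the $2$-torsion reduction via Proposition \ref{2primaryCB}, legitimate here since $X(K)\neq\emptyset$, and the geometric simple connectedness of the rational surface $X_{\overline{\kappa(v)}}$) are used, via purity on the constant smooth model $X\times_{\mathbf{F}}\mathscr{O}_v$, to show that $\partial_{D_v}(\alpha_2)$ is pulled back from $\coh^1(\kappa(v),\mathbf{Q}_2/\mathbf{Z}_2)$, hence that each local evaluation map is constant on $X(K_v)$; you then conclude by comparison with the diagonal image of a global point and reciprocity. The underlying mechanism is identical --- $X$ is constant and geometrically simply connected, so Brauer classes cannot vary in the fibre direction at any place --- but your version bypasses Lemma \ref{BMchangementdebase} entirely, at the cost of invoking the identity $\inv_v(x_v^*(\alpha_2))=\bar{x}_v^*(\partial_{D_v}(\alpha_2))$, i.e.\ the functoriality of residues along the transversal section $\spec(\mathscr{O}_v)\rightarrow\mathscr{X}_v$ combined with the description of $\inv_v$ as the residue followed by evaluation at Frobenius; both facts are standard (see e.g.\ \cite[Theorem 3.7.5]{MR4304038} and the paper's diagram (\ref{gysindiagram}), which relies on the same compatibilities) but should be cited explicitly rather than only gestured at. The paper's route has the small advantage of producing the reusable intermediate statement that every class of $\br(X_K)$ is, modulo constants, restricted from $X\times_{\mathbf{F}}C$, while yours isolates more transparently the local constancy of the evaluation maps.
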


Before we give a proof, let us show how assertion (2) of Theorem \ref{thmKOLLARPOINTS} is inferred from~Proposition \ref{BMzero}.

\begin{proof}[Proof of (2) of Theorem \ref{thmKOLLARPOINTS}]
Let us use the notations of the statement. We denote by $\rho:X\times C\rightarrow C$ the projection morphism and we fix $x\in X(\mathbf{F})$ and $A\subset X(\mathbf{F})$ with $|A|\leq |C(\mathbf{F})|$. By Proposition \ref{BMzero}, the variety $X_K$ has weak approximation, so that~$X(K)$ is dense in $X(K_{\Omega})$. Since $|A|\leq |C(\mathbf{F})|$, let us choose $T\subset C(\mathbf{F})$ satisfying $|T|=|A|$, and we fix a bijection $T\rightarrow A$ written as $v\mapsto x_v$. We set $(P_v)\in X(K_{\Omega})$ defined as $P_v\coloneqq x_v$ for $v\in T$ and $P_v\coloneqq x$ for $v\in\Omega_K\setminus T$. Using the notations of Reminder \ref{remiAPPROX}, we then consider the nonempty open subset $W_{T,1}$ of $X(K_{\Omega}^S)$ associated to the tuple $(P_v)$ and the model~$\rho$ of~$X$. Since $X(K)$ is dense in $X(K_{\Omega})$, we may pick $P\in X(K)\cap W_{T,1}$. Then, using notations of \S\ref{rappelapproximation}, the $K$-point $P$ extends to a section $j:C\rightarrow X\times C$ of $\rho$ that coincides with $\widehat{P_v}$ on $\spec(\kappa(v))=\spec(\mathbf{F})$ for $v\in T$. In particular, if $g:C\rightarrow X$ is the first coordinate of~$j$, then for each $v\in T$, the restriction of $g$ to $\spec(\kappa(v))=\spec(\mathbf{F})$ is~$x_v$. Since $A=\{x_v:v\in T\}$, this shows that $A=g(T)\subset g(C(\mathbf{F}))$.
\end{proof}

The proof of Proposition \ref{BMzero} relies on the following lemma.
\begin{lem}\label{BMchangementdebase}
Let $F$ be a field of characteristic different from $2$ and $C$ a nice curve over $F$ with function field $K$. Let $f:X\rightarrow\mathbf{P}^1_{F}$ be a regular conic bundle with $X(F)\neq\emptyset$. Then the morphism $\br(X\times_F C)\rightarrow\br(X_K)/\br(K)$ is surjective.
\end{lem}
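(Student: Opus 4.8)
The goal is to show that the natural map $\br(X\times_F C)\rightarrow\br(X_K)/\br(K)$ is surjective, where $X_K$ is the generic fibre of the projection $X\times_F C\rightarrow C$. Since $X$ is a regular conic bundle over $\mathbf{P}^1_F$ with $X(F)\neq\emptyset$, Proposition~\ref{2primaryCB} tells us that $\br(X_K)/\br(K)$ is $2$-torsion, so the plan is to produce explicit $2$-torsion Brauer classes on the total space $X\times_F C$ that restrict onto a given generator. The first step is to fix a class $\beta\in\br(X_K)$ and use purity/residue arguments along the generic fibre: because $X_K$ is geometrically rational, $\br(X_K)/\br(K)$ is controlled by residues at the non-split fibres of $X_K\rightarrow\mathbf{P}^1_K$, which are pulled back from the non-split fibres of $f:X\rightarrow\mathbf{P}^1_F$ via the base change $\mathbf{P}^1_K=\mathbf{P}^1_F\times_F K$.

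The key structural input I would exploit is the presence of a section. The hypothesis $X(F)\neq\emptyset$, combined with Reminder~\ref{RequivCONIC}(ii) and Lemma~\ref{sectionsplit}-type reasoning, should let me fix an $F$-point $x_0\in X(F)$; this provides a splitting of the restriction map from the total space to the fibre. Concretely, the point $x_0$ induces a section $\spec F\rightarrow X$ and hence a retraction of the inclusion $X_K\hookrightarrow X\times_F C$ at the level of Brauer groups: the composite $\br(X\times_F C)\rightarrow\br(X_K)\rightarrow\br(X_{x_0}\times_F C)=\br(C)$ lets one peel off the "constant" part. The plan is to combine this with the projection $X\times_F C\rightarrow X$, which gives $\br(X)\rightarrow\br(X\times_F C)$, so that classes on $X$ itself map into the total space and then restrict to $\br(X_K)$; the image of $\br(X)$ in $\br(X_K)$ already accounts for classes that are "horizontal," while the section handles the fibral ones modulo $\br(K)$.

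More precisely, I would argue as follows. Given $\beta\in\br(X_K)[2]$, I want to show its class mod $\br(K)$ lifts to $\br(X\times_F C)$. Consider the commutative square relating the residue maps on $\br(X_K)$ along closed points of $\mathbf{P}^1_K$ to the residue maps on $\br(X\times_F C)$ along the corresponding divisors, using the functoriality diagram~(\ref{residuesquare}). Since the non-split locus of $X_K\rightarrow\mathbf{P}^1_K$ is exactly $B\times_F K$, the residues of $\beta$ live in cohomology of residue fields that are base changes of the $\kappa(s)$ for $s\in B$. The class of $X$ on the total space, or rather a class built from the generic fibre $X_\eta$ of $f$ over $\mathbf{P}^1_F$, has matching residues; subtracting it should kill all residues of $\beta$ and exhibit the difference as coming from $\br(K)$ via the section $x_0$. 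The main obstacle I expect is bookkeeping the comparison of residues: one must verify that the residue of a candidate lift on $X\times_F C$ along a vertical divisor (sitting over a point of $B$) agrees with the residue of $\beta$, and that no extra residues appear along the "new" divisors coming from $C$ itself. Controlling these latter residues — those transverse to the $X$-direction — is where the section $x_0$ and the $2$-torsion finiteness of $\br(X_K)/\br(K)$ (Proposition~\ref{2primaryCB} and Corollary~\ref{finitebrauerSB}) must be invoked to ensure the lift is unobstructed modulo constants.
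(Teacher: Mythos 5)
Your proposal circles the right objects but is missing the one idea that makes the paper's argument work, and the concrete mechanism you do propose would fail. The obstruction to extending a class $\beta\in\br(X_K)[2]$ to $\br(X\times_F C)[2]$ is measured by its residues along the divisors of $X\times_F C$ lying over codimension-one points $P$ of $C$, i.e.\ along the fibres $X_{\kappa(P)}$ of the projection to $C$ --- not along the non-split fibres of the conic bundle $X_K\rightarrow\mathbf{P}^1_K$, which are divisors internal to $X_K$ and irrelevant to whether $\beta$ spreads out over $C$. These residues live in $\coh^1_{\et}(X_{\kappa(P)},\mathbf{Z}/2\mathbf{Z})$, and the crux of the paper's proof is that the restriction $\coh^1(\kappa(P),\mathbf{Z}/2\mathbf{Z})\rightarrow\coh^1_{\et}(X_{\kappa(P)},\mathbf{Z}/2\mathbf{Z})$ is an \emph{isomorphism}: since $X_{\overline{\kappa(P)}}$ is a rational surface (Corollary \ref{finitebrauerSB}), purity of the \'etale fundamental group gives $\pi_1(X_{\overline{\kappa(P)}})=1$, so $\pi_1(X_{\kappa(P)})\rightarrow\pi_1(\spec\kappa(P))$ is an isomorphism. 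This is what forces every residue of $\beta$ to be ``constant,'' i.e.\ pulled back from $\coh^1(\kappa(P),\mathbf{Z}/2\mathbf{Z})$; one then uses the section of $X\times C\rightarrow C$ coming from $X(F)\neq\emptyset$ to see that $\coh^3_{\et}(C,\mathbf{Z}/2\mathbf{Z})\rightarrow\coh^3_{\et}(X\times C,\mathbf{Z}/2\mathbf{Z})$ is injective, so that this residue data is realised by some $\gamma\in\br(K)[2]$, and $\beta-\pi_\eta^*\gamma$ extends. Your proposal never identifies the simple-connectedness input; you instead defer the control of exactly these residues to ``the section $x_0$ and the $2$-torsion finiteness of $\br(X_K)/\br(K)$,'' neither of which does that job.

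The concrete device you offer --- subtracting ``a class built from the generic fibre $X_\eta$ of $f$'' to kill the residues of $\beta$ --- cannot work: $\beta$ is an arbitrary element of $\br(X_K)[2]$ and its residues along the fibres $X_{\kappa(P)}$ vary with $\beta$, so no single fixed class has ``matching residues.'' (Your parallel suggestion that the image of $\br(X)$ in $\br(X_K)$ ``already accounts for'' everything modulo $\br(K)$ could in principle be turned into a genuinely different proof, via $\br_1(X)/\br(F)\simeq\coh^1(F,\pic(X^{\sep}))$ and the fact that this group is unchanged under the base change $F\rightarrow K$ because $F$ is algebraically closed in $K$ and $\pic(X^{\sep})$ is torsion-free; but you neither state nor prove that surjectivity, so as written it is an assertion, not an argument.) As it stands the proposal has a genuine gap at the decisive step.
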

\begin{proof}
To prove the statement, by Lemma \ref{2primaryCB} we need to prove the surjectivity of the map $\br(X\times_F C)\left[2\right]\rightarrow\left(\br(X_K)/\br(K)\right)\left[2\right]$. Since $X(K)\neq\emptyset$ and since the choice of any element of $X(K)$ gives rise to an isomorphism $\br(X_K)\simeq\br(K)\oplus\br(X)/\br(K)$, we have $\left(\br(X_K)/\br(K)\right)\left[2\right]\simeq\br(X)\left[2\right]/\br(F)\left[2\right]$. It is thus enough to show that the map $\br(X\times_F C)\left[2\right]\rightarrow\br(X)\left[2\right]/\br(F)\left[2\right]$ is surjective. 

We denote by $\pi:X\times C\rightarrow C$ the projection morphism, by $\eta$ the generic point of $C$ and $\pi_{\eta}$ the base change of $\pi$ by $\eta$. We then have the following commutative diagram whose rows are exact
\begin{equation}\label{gysindiagram}
\begin{tikzcd}[column sep=1.4em]
0\arrow[r] &\br(X\times C)\left[2\right]\arrow[r] & \br(X_K)\left[2\right]\arrow[r] & \displaystyle\bigoplus_{P\in C^{(1)}} \coh^1_{\et}(X_{\kappa(P)},\mathbf{Z}/2\mathbf{Z})\arrow[r] & \coh^3_{\et}(X\times C, \mathbf{Z}/2\mathbf{Z}) \\
0\arrow[r] &\br(C)\left[2\right]\arrow[r]\arrow[u,"\pi^*"] & \br(K)\arrow[u, "\pi_{\eta}^*"]\left[2\right]\arrow[r] & \displaystyle\bigoplus_{P\in C^{(1)}} \coh^1(\kappa(P),\mathbf{Z}/2\mathbf{Z})\arrow[r]\arrow[u, "\oplus\res_{X_{\kappa(P)}/\kappa(P)}", swap] & \coh^3_{\et}(C, \mathbf{Z}/2\mathbf{Z})\arrow[u, "\pi^*"]
\end{tikzcd}
\end{equation}
where the bottom row (resp. top row) is the limit, as $U$ ranges over nonempty open subsets of $C$, of the exact sequence \cite[(3.17)]{MR4304038} with $Z\coloneqq C\setminus U$ (resp. $Z\coloneqq X\setminus\pi^{-1}(U)$), $l=2$ and $n=1$. The commutativity of the left square is due to functoriality of Brauer groups, that of the central square is due to functoriality of residues (see e.g.\ \cite[Theorem~3.7.5]{MR4304038}) and to the irreducibility of $X_{\kappa(P)}$, and the commutativity of the right square is given by the functoriality of Gysin's spectral sequence \cite[Lemma 2.3.6]{MR4304038}.

Let us prove that $\pi^*:\coh^3_{\et}(C, \mathbf{Z}/2\mathbf{Z})\rightarrow\coh^3_{\et}(X\times C, \mathbf{Z}/2\mathbf{Z})$ is injective and that $\oplus\res_{X_{\kappa(P)}/\kappa(P)}$ is an isomorphism. By diagram chasing on (\ref{gysindiagram}), this will prove that $\br(X\times_F C)\left[2\right]\rightarrow\br(X)\left[2\right]/\br(F)\left[2\right]$ is surjective. Since $X(F)\neq\emptyset$, the morphism~$\pi$ has a section, which proves that $\pi^*$ is injective. Furthermore, for each $P\in C^{(1)}$, we fix~$\overline{\kappa(P)}$ a separable closure of $\kappa(P)$, a geometric point $\overline{x}$ of $X_{\overline{\kappa(P)}}$ and we still denote by $\overline{x}$ the image of $\overline{x}$ in $X_{\kappa(P)}$ and $\spec(\kappa(P))$. Then, \cite[Proposition~5.7.20]{MR2791606} supplies canonical isomorphisms $\coh^1_{\et}(X_{\kappa(P)},\mathbf{Z}/2\mathbf{Z})\simeq\mor_{\cont}(\pi_1(X,\overline{x}),\mathbf{Z}/2\mathbf{Z})$ and $\coh^1(\kappa(P),\mathbf{Z}/2\mathbf{Z})\simeq\mor_{\cont}(\pi_1(\spec(\kappa(P)),\overline{x}),\mathbf{Z}/2\mathbf{Z})$ under which $\res_{X_{\kappa(P)}/\kappa(P)}$ is identified to the pullback $\alpha^*:\mor_{\cont}(\pi_1(\spec(\kappa(P)),\overline{x})\rightarrow\mor_{\cont}(\pi_1(X,\overline{x}),\mathbf{Z}/2\mathbf{Z})$ of the continuous homomorphism $\alpha:\pi_1(X,\overline{x})\rightarrow\pi_1(\spec(\kappa(P)),\overline{x})$, itself induced by the morphism $X_{\spec(\kappa(P))}\rightarrow\spec(\kappa(P))$. Moreover, $\alpha$ is surjective with kernel $\pi_1(X_{\overline{\kappa(P)}},\overline{x})$, see e.g.\ \cite[Proposition~3.3.7]{MR2791606}. But Corollary \ref{finitebrauerSB} ensures that $X_{\overline{\kappa(P)}}$ is rational, which, by purity of the étale fundamental group \cite[X.\S3, Corollaire 3.3]{MR2017446}, implies that $\pi_1(X_{\overline{\kappa(P)}},\overline{x})=1$. In other words, $\alpha$ is an isomorphism, hence so is~$\alpha^*$, that is, $\res_{X_{\kappa(P)}/\kappa(P)}$ is an isomorphism.
\end{proof}

\begin{proof}[Proof of Proposition \ref{BMzero}]
Denote by $h:X\times\spec(K)\rightarrow X\times C$ the product of $id_X$ with the generic point of $C$. Let us fix $v\in \Omega_K$ and $x_v\in X(K_v)$. We write $x_v^*:\br(X_{K_v})\rightarrow\br(K_v)$ (resp. $h^*:\br(X\times C)\rightarrow\br(X\times\spec(K))$) for the pullback of~$x_v$ (resp.~$h$). Let us verify that $x_v^*\circ h^*=0$. For this purpose, from now on, we use the notation of \S\ref{rappelapproximation}. Since the projection morphism $X\times C\rightarrow C$ is a model of $X$, by the valuative criterion of properness, the $K_v$-point $x_v$ extends to a unique $\widehat{\mathscr{O}_v}$-morphism $\widehat{x_v}:\spec(\widehat{\mathscr{O}_v})\rightarrow (X\times C)\times_C\spec(\widehat{\mathscr{O}_v})$. If we still denote by $\widehat{x_v}:\spec(\widehat{\mathscr{O}_v})\rightarrow X\times C$ its projection to~$X\times C$, we thus have a commutative diagram
\begin{center}
\begin{tikzcd}
\spec(K_v)\arrow[d]\arrow[r, "x_v"] & X_K \arrow[d, "h"]\\
 \spec(\mathscr{O}_v)\arrow[r, "\widehat{x_v}"] & X\times C
\end{tikzcd}
\end{center}
so that $x_v^*\circ h^*$ factors through $\widehat{x_v}^*:\br(X\times C)\rightarrow\spec(\mathscr{O}_v)$. But $\br(\mathscr{O}_v)=0$ (see e.g. \cite[Corollary 6.9.3]{MR3729254}) which proves that $x_v^*\circ h^*=0$.

Now, for $\alpha\in \br(X_K)$, Lemma \ref{BMchangementdebase} supplies $\beta\in\im(h^*)$ and $\gamma\in \br(K)$ such that $\alpha=\beta+\gamma$. If $(x_v)\in X(K_{\Omega})$, we thus have $\langle(x_v),\alpha\rangle_{BM}=\langle(x_v),\beta\rangle_{BM}+\langle (x_v),\gamma\rangle_{BM}$. But $\langle(x_v),\beta\rangle_{BM}=\sum_{v\in\Omega_K}\inv_v (x_v^*(\beta))=0$ since $\im(h^*)\subset\ker(x_v^*)$ for all $v\in\Omega_K$, and $\langle (x_v),\gamma\rangle_{BM}=0$ by Albert-Brauer-Hasse-Noether exact sequence. Thus, $\langle(x_v),\alpha\rangle_{BM}=0$, so that the Brauer-Manin pairing on $X_K$ is identically zero.
\end{proof}

\begin{appendices}

\section{Brauer groups of surfaces in positive characteristic}\label{appendixA}

\subsection{Geometrically separably rationally connected surfaces}\label{finiteBRAUER}

Needed in our proof of (1) of Theorem \ref{thmKOLLARPOINTS} is the following statement on Brauer groups of separably rationally connected varieties. We recall that if $X$ is a scheme over a field $F$ and~$F^{\sep}$ is a separable closure of~$F$, we denote the \textit{algebraic Brauer group} of~$X$ by~$\br_1(X)\coloneqq\ker\left[\br(X)\xrightarrow{\mathrm{pr}^*}\br(X\otimes_FF^{\sep})\right]$, where $\mathrm{pr}:X\otimes_FF^{\sep}\rightarrow X$ is the first projection morphism.

\begin{prop}\label{brauerfini}
Let $F$ be a field and $F^{\sep}$ a separable closure of $F$. Let also~$X$ be a smooth, proper and geometrically integral $F$-variety and set $X^{\sep}\coloneqq X\otimes_FF^{\sep}$. If X is separably rationally connected, then~$\br_1(X)/\br(F)$ is finite. In particular, if~$X^{\sep}$ is rational, then~$\br(X)/\br(F)$ is finite.
\end{prop}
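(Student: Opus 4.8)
The plan is to prove the statement in two stages, corresponding to the two sentences of the conclusion. The first and main stage is to show that $\br_1(X)/\br(F)$ is finite for a separably rationally connected $X$; the second is to deduce the statement about $\br(X)/\br(F)$ when $X^{\sep}$ is rational, which should follow quickly once the geometric Brauer group is understood.

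For the main stage, I would invoke the Hochschild--Serre spectral sequence for the covering $X^{\sep}\to X$, which provides an exact sequence relating $\br_1(X)$ to the Galois cohomology of the geometric Picard group. Concretely, since $X$ is smooth, proper and geometrically integral with a smooth base, the low-degree terms give an exact sequence of the shape
\begin{equation*}
\coh^1(F,\pic(X^{\sep}))\longrightarrow \br_1(X)/\br(F)\longrightarrow \coh^2(F,\pic(X^{\sep})),
\end{equation*}
so that finiteness of $\br_1(X)/\br(F)$ would follow from control of the Galois cohomology of $\pic(X^{\sep})$. The key geometric input here is that separable rational connectedness forces $\pic(X^{\sep})$ to be a finitely generated free abelian group: there is no torsion and no continuous part, essentially because $\coh^1(X^{\sep},\mathscr{O})=0$ and the absence of a nontrivial Albanese variety for separably rationally connected varieties kills any abelian-variety contribution to the Picard scheme. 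Thus $\pic(X^{\sep})$ is a finitely generated $\mathbf{Z}$-module with continuous Galois action.

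The crux of the argument is then a finiteness statement in Galois cohomology: for a finitely generated free $\mathbf{Z}$-module $M$ with continuous action of the absolute Galois group $G_F$, one wants $\coh^i(F,M)$ to be controlled so that the middle term above is finite. The standard route is to pass to a finite extension $L/F$ over which the Galois action on $M=\pic(X^{\sep})$ becomes trivial; over $L$ the module is a finite sum of copies of $\mathbf{Z}$, and a restriction--corestriction argument bounds $\coh^i(F,M)$ by the $\coh^i(\gal(L/F),M)$ together with $\coh^i(L,\mathbf{Z})$-type terms, the former being finite since $\gal(L/F)$ is a finite group acting on a finitely generated module. The hard part, and the place where care is needed, is ensuring that the relevant cohomology groups are genuinely finite rather than merely torsion; this is where one must use that $M$ is finitely generated and that $\coh^1(G,\mathbf{Z})=0$ for profinite $G$ while $\coh^2(G,\mathbf{Z})$ relates to $\coh^1(G,\mathbf{Q}/\mathbf{Z})$, whose finiteness over a given finite quotient is automatic. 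I expect the main obstacle to be assembling these pieces cleanly over an \emph{arbitrary} base field $F$, since one cannot assume $F$ has small cohomological dimension here; the argument must rely only on the finite generation of $\pic(X^{\sep})$ and formal properties of group cohomology of finitely generated modules.

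For the final sentence, when $X^{\sep}$ is rational one has the stronger conclusion $\br(X^{\sep})=0$, since the geometric Brauer group of a smooth rational variety vanishes (a smooth rational variety has trivial $\br$ over a separably closed field, being a birational invariant that vanishes for projective space). Consequently $\br(X)=\br_1(X)$, and the finiteness of $\br(X)/\br(F)=\br_1(X)/\br(F)$ follows immediately from the first part. I would therefore organize the write-up so that the rationality hypothesis is used only at the very end to identify $\br(X)$ with $\br_1(X)$, keeping the bulk of the work in the separably-rationally-connected case.
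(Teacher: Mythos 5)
There is a genuine gap in the main step. The low-degree exact sequence coming from the Hochschild--Serre spectral sequence for $X^{\sep}\to X$ reads
\begin{equation*}
\br(F)\longrightarrow \br_1(X)\longrightarrow \coh^1(F,\pic(X^{\sep}))\longrightarrow\coh^3(F,\mathbf{G}_m),
\end{equation*}
so that $\br_1(X)/\br(F)$ \emph{injects into} $\coh^1(F,\pic(X^{\sep}))$; no $\coh^2$ of the Picard group ever enters. You have written the sequence with the arrows in the wrong places, as
$\coh^1(F,\pic(X^{\sep}))\to\br_1(X)/\br(F)\to\coh^2(F,\pic(X^{\sep}))$,
and your argument then hinges on the finiteness of $\coh^2(F,\pic(X^{\sep}))$. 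That group is infinite in general: for a finitely generated free $\mathbf{Z}$-module $M$ with trivial continuous $G_F$-action one has $\coh^2(G_F,M)\simeq\coh^1(G_F,\mathbf{Q}/\mathbf{Z})\otimes M=\mor_{\cont}(G_F,\mathbf{Q}/\mathbf{Z})\otimes M$, which already for $F$ a finite field and $M=\mathbf{Z}$ equals $\mathbf{Q}/\mathbf{Z}$. No restriction--corestriction device can repair this, since the failure occurs already after restricting to the finite extension $L$ trivialising the action. The proof is saved by using the correct sequence: then only $\coh^1(F,M)$ is needed, and for $M$ finitely generated and torsion-free this is indeed finite (the action factors through a finite quotient $\gal(L/F)$, $\coh^1(G_L,M)=\mor_{\cont}(G_L,M)=0$, and $\coh^1(\gal(L/F),M)$ is a finitely generated torsion group). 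This is exactly the route the paper takes.

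Two smaller points. First, your justification that $\pic(X^{\sep})$ is torsion-free is too quick: the vanishing of $\coh^1(X^{\sep},\mathscr{O})$ only gives $\pic(X^{\sep})=\ns(\overline{X})$, hence finite generation; torsion-freeness requires the simple connectedness of $\overline{X}$ (which kills the prime-to-$p$ torsion via Kummer theory) \emph{and} a separate argument for the $p$-torsion in characteristic $p$, for which the paper cites a specific recent result. Second, for the final sentence, the birational invariance of the full Brauer group of smooth proper varieties in positive characteristic is not elementary: one needs purity of the Brauer group to conclude $\br(X^{\sep})\simeq\br(\mathbf{P}^{\dim X}_{F^{\sep}})=0$; the statement you rely on is classical only away from the characteristic.
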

\begin{proof}
The statement is well known if the characteristic of $F$ is zero (see e.g.\ \cite[assertion~(6)~in~p.347]{MR4304038}), so we may assume that $F$ has positive characteristic $p$. The second part of the statement is inferred from the first part. Indeed, if $X^{\sep}$ is rational, then $\br(X^{\sep})\simeq\br(\mathbf{P}^{\dim(X)}_{F^{\sep}})$ by purity of the Brauer group \cite{MR3959863}, so that $\br(X^{\sep})=\br(F^{\sep})=0$, hence $\br(X)=\br_1(X)$. Let us then assume that $X$ is separably rationally connected and prove the first part of the statement. We denote by $\overline{F}$ an algebraic closure of $F$ and we set $\overline{X}\coloneqq X\otimes_F\overline{F}$.

We extract from exact sequence (2.23) of \cite[Corollary 2.3.9]{MR1845760} a short exact sequence:
\begin{center}
\begin{tikzcd}
\br(F)\arrow[r, "\pi^*"] & \br_1(X) \arrow[r] & \coh^1(F, \pic(X^s))
\end{tikzcd}
\end{center}
where $\pi:X\rightarrow\spec(F)$ is the structural morphism of $X$. It is thus enough to prove that $\pic(X^{\sep})$ is finitely generated and torsion-free, from which we infer the finiteness of~$\coh^1(F, \pic(X^{\sep}))$, hence that of $\br_1(X)/\br(F)$. First, separably rational connectedness of $X$ ensures that $\coh^1(\overline{X},\mathscr{O}_{\overline{X}})=0$ by (see \cite{MR3158990} and \cite{MR3268754}). This implies that~$\pic(X^{\sep})=\pic(\overline{X})=\ns(\overline{X})$, the Néron-Severi group of $\overline{X}$, which is finitely generated (see e.g.\ \cite[Corollary~5.1.3.(i)]{MR4304038}). Now, using Kummer's exact sequence, for each prime number~$\ell\neq p$ and $m\geq0$ we have $\pic(\overline{X})[\ell^m]=\coh^1_{\et}(\overline{X},\mathbf{Z}/\ell^m\mathbf{Z})$. But since~$\overline{X}$ is separably rationally connected, it is simply connected (see \cite[Theorem~2.1]{MR2538008}), so that~$\coh^1_{\et}(\overline{X},\mathbf{Z}/\ell^m\mathbf{Z})=\pic(\overline{X})[\ell^m]=0$. In particular, $\pic(\overline{X})\{p'\}=0$. Furthermore,~$\pic(\overline{X})\{p\}=0$ by \cite[Theorem 1.4]{MR3831010}. From this we deduce that~$\pic(\overline{X})$ is torsion-free.
\end{proof}

\begin{cor}\label{finitebrauerSB}
Let $F$ be a field and $f:X\rightarrow\mathbf{P}^1_F$ a conic bundle. Then, for all field extension $K$ of $F$ and separable closure $K^{\sep}$ of $K$, the variety $X_{K^{\sep}}$ is rational. In particular, the group~$\br(X_K)/\br(K)$ is finite.
\end{cor}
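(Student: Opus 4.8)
The plan is to treat the two assertions separately, the rationality statement carrying all the content. Since rationality is a birational invariant, it suffices to understand the function field of $X_{K^{\sep}}$, so I would work with the generic fibre of the conic bundle $X_{K^{\sep}}\to\mathbf{P}^1_{K^{\sep}}$. Writing $L\coloneqq K^{\sep}(t)$ for the function field of $\mathbf{P}^1_{K^{\sep}}$, this generic fibre is a smooth conic $C$ over $L$; being a one-dimensional Severi--Brauer variety (in every characteristic), $C$ has an associated class $[C]\in\br(L)$ and acquires an $L$-point exactly when $[C]=0$. Thus I would reduce everything to the vanishing $\br(L)=0$: once $C\simeq\mathbf{P}^1_L$, the function field of $X_{K^{\sep}}$ is $L(C)=K^{\sep}(t,u)$, purely transcendental of transcendence degree $2$ over $K^{\sep}$, so $X_{K^{\sep}}$ is rational.

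It then remains to prove $\br(K^{\sep}(t))=0$, which I would split according to torsion, letting $p$ denote the characteristic exponent of $K^{\sep}$. The prime-to-$p$ part is clean: in the Faddeev/residue exact sequence for $\mathbf{P}^1$ over $K^{\sep}$, the terms are $\br(K^{\sep})$ and the groups $\coh^1(\kappa(x),\mathbf{Q}/\mathbf{Z})$ over the closed points $x$. Each residue field $\kappa(x)$ is a finite, hence purely inseparable, extension of the separably closed field $K^{\sep}$, and such an extension is again separably closed; therefore its absolute Galois group is trivial, $\br(K^{\sep})=0$, and the $\coh^1(\kappa(x),\mathbf{Q}/\mathbf{Z})$ vanish. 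This forces $\br(L)\{\ell\}=0$ for every prime $\ell\neq p$.

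The delicate point --- and the step I expect to be the main obstacle --- is the $p$-primary part $\br(L)\{p\}$ in positive characteristic, which is precisely the part that controls conics when $p=2$ and to which the residue argument above does not apply (the usual Faddeev sequence only sees the prime-to-$p$ torsion in characteristic $p$). Here I would argue through cohomological dimension: since $K^{\sep}$ is separably closed its absolute Galois group is trivial, so $\cd(K^{\sep})=0$, and adjoining one transcendental raises the cohomological dimension by at most one, giving $\cd(L)\leq1$; the vanishing of $\br(L)\{p\}$ then follows from the characterization of fields of $p$-cohomological dimension at most one. The subtlety to handle carefully is that $K^{\sep}$ may be imperfect, so the transcendence-degree estimate must be invoked in a form valid at the characteristic prime rather than only for $\ell\neq p$. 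Combining the two parts yields $\br(L)=0$, hence the rationality of $X_{K^{\sep}}$.

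Finally, for the \textsl{in particular} clause I would apply Proposition \ref{brauerfini} over the base field $K$ to the variety $X_K$, noting that $(X_K)\otimes_K K^{\sep}=X_{K^{\sep}}$ is rational by the first part; its second assertion then gives that $\br(X_K)/\br(K)$ is finite. To meet the smoothness hypothesis of Proposition \ref{brauerfini} I would, if necessary, first replace $f$ by an equivalent regular conic bundle via Proposition \ref{equweakconicbundle} (this is harmless, since in the situations where the corollary is used the base is perfect and the regular total space is automatically smooth).
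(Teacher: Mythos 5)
Your reduction of the whole statement to the vanishing of $\br(K^{\sep}(t))$ is where the argument breaks, because that vanishing is \emph{false} when $K^{\sep}$ is imperfect --- which is exactly the situation in which the corollary is applied in this paper (there $K$ is a global function field $\mathbf{F}(C)$, so $K^{\sep}\neq\overline{K}$). Your prime-to-$p$ computation via the Faddeev sequence is fine, but the $p$-primary step is wrong on two counts. First, the implication ``$\cd_p(L)\leq 1\Rightarrow\br(L)\{p\}=0$'' fails in characteristic $p$: by Serre, \emph{every} field of characteristic $p$ has $p$-cohomological dimension at most $1$, yet $\br(\mathbf{F}_p((t)))\{p\}$ and $\br(\mathbf{F}_p(t))\{p\}$ are nonzero; the $p$-torsion of the Brauer group in characteristic $p$ is governed by Artin--Schreier/differential-form invariants, not by Galois cohomology with finite constant coefficients. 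Second, the conclusion itself is false: if $k$ is separably closed of characteristic $p$ and $u\in k\setminus k^p$, the cyclic $p$-symbol $[t,u)$ is a nonzero class in $\br(k(t))[p]$ (one checks that the only constants in the norm group of $k(\wp^{-1}(t))/k(t)$ are $p$-th powers, since every fibre of the Artin--Schreier cover splits completely and the resulting within-fibre norms telescope to $1$). When $p=2$ this class is the class of a smooth conic, so even the weaker statement you actually need --- that the generic fibre of $X_{K^{\sep}}\to\mathbf{P}^1_{K^{\sep}}$ acquires a rational point --- can fail. A rational surface can perfectly well carry a conic bundle structure with non-split generic fibre (compare $x^2+y^2=tz^2$ over $\mathbf{R}$), so rationality of $X_{K^{\sep}}$ cannot be read off from the generic fibre of the given fibration in characteristic~$2$.

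This is why the paper takes a different route: it first base changes to the algebraic closure $\overline{F}$, where Tsen's theorem genuinely applies and gives $\br(\overline{F}(\mathbf{P}^1))=0$, hence rationality of $X_{\overline{F}}$; it then invokes \cite[Theorem 1]{MR948785} (Coombes: a surface that is rational over $\overline{F}$ is already rational over $F^{\sep}$) to descend rationality along the purely inseparable extension $\overline{F}/F^{\sep}$. That descent theorem is precisely the content your argument is missing, and it cannot be replaced by a Brauer-group computation over $K^{\sep}(t)$. Your argument does go through in characteristic different from $2$ (there one only needs $\br(L)[2]=0$ with $2\neq p$, which your Faddeev step supplies), and your treatment of the ``in particular'' clause via Proposition \ref{brauerfini} matches the paper's; but the corollary as stated, and as used, must cover characteristic~$2$.
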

\begin{proof}
Let $K$ be a field extension of $F$ and $\eta$ the generic point of $\mathbf{P}^1_F$. We let $K^{\sep}$ (resp.~$F^{\sep}$) be a separable closure of $K$ (resp. the separable closure of $F$ contained in $K^{\sep}$) and $\overline{F}$ an algebraic closure of $F$ containing $F^{\sep}$.

Let us first prove that $X_{\overline{F}}$ is rational. Indeed, since~$X_{\eta}$ is geometrically integral we have an isomorphism $\overline{F}(X)\simeq\overline{F}(\mathbf{P}^1)(X_{\eta})$ over $\overline{F}$. But $\br(\overline{F}(\mathbf{P}^1))=0$ by Tsen's theorem so that the class of the smooth conic $X_{\eta}\otimes_{F(\mathbf{P}^1)}\overline{F}(\mathbf{P}^1)$ in $\br(\overline{F}(\mathbf{P}^1))$ is trivial, that is, there exists an isomorphism $X_{\eta}\otimes_{F(\mathbf{P}^1)}\overline{F}(\mathbf{P}^1)\simeq\mathbf{P}^1_{\overline{F}(\mathbf{P}^1)}$ of $\overline{F}(\mathbf{P}^1)$-varieties. This proves that the field $\overline{F}(\mathbf{P}^1)(X_{\eta})$ is purely transcendental over~$\overline{F}(\mathbf{P}^1)$, hence $\overline{F}(X)\simeq\overline{F}(\mathbf{P}^1)(X_{\eta})$ is purely transcendental over $\overline{F}$. In other words, $X_{\overline{F}}$ is rational.

By \cite[Theorem 1]{MR948785}, this implies that $X_{F^{\sep}}$ is also rational, so that the $K^{\sep}$-variety $X_{K^{\sep}}=X_{F^{\sep}}\otimes_{F^{\sep}}K^{\sep}$ is rational. The last part of the statement is a consequence of the last assertion of Proposition \ref{brauerfini}.
%For any field extension $E$ of $F$, we denote by $(X_E)_{\eta}$ the generic fibre of~$f\otimes_FE$. Fix $K$ a field extension of $F$ and let $K^{\sep}$ be a separable closure of~$K$. By Proposition~\ref{brauerfini}, it suffices to prove that $X_{K^{\sep}}$ is rational. Denote by $\overline{F}$ the algebraic closure of~$F$ in~$K^{\sep}$, which is algebraically closed since~$F$ is perfect. By functoriality, the class of the smooth conic $(X_{K^{\sep}})_{\eta}$ in~$\br(K^{\sep}(\mathbf{P}^1))$ is the image of the class of $(X_{\overline{F}})_{\eta}$ by the pullback morphism~$\br(\overline{F}(\mathbf{P}^1))\rightarrow\br(K^{\sep}(\mathbf{P}^1))$. Since $\overline{F}$ is algebraically closed, Tsen's theorem ensures that $\br(\overline{F}(\mathbf{P}^1))=0$, from which me deduce that $[(X_{K^{\sep}})_{\eta}]=0\in \br(K^{\sep}(\mathbf{P}^1))$, that is,~$(X_{K^{\sep}})_{\eta}$ is rational. This proves that~$K^{\sep}(X_{\eta})$ is purely transcendental over $K^{\sep}(\mathbf{P}^1)$, hence over~$K^{\sep}$. Now, as~$K^{\sep}(X_{\eta})=K^{\sep}(X)$, we indeed have that~$X_{K^{\sep}}$ is rational.
\end{proof}

\subsection{Conic bundles over a curve}\label{BrauerCB}

In characteristic zero, it is well known that up to constant classes, the Brauer group of a conic bundle over~$\mathbf{P}^1$ is a $2$-torsion group, see e.g.\ \cite[Corollary 11.3.5]{MR4304038}. In the following proposition, we show that it still holds in odd characteristic.

\begin{prop}\label{2primaryCB}
Let $F$ be a field of characteristic different from $2$ and $X\rightarrow\mathbf{P}^1_F$ a regular conic bundle. If $X(F)\neq\emptyset$, then $\br(X)/\br(F)$ is a $2$-torsion group.
\end{prop}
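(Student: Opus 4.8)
The plan is to reduce the statement to a computation in the Galois cohomology of the geometric Picard group, and then to read off the $2$-torsion from the fact that the two components of each degenerate fibre are interchanged by an involution. First I would record the reduction. Since $f$ is a conic bundle, Corollary \ref{finitebrauerSB} shows that $X^{\sep}\coloneqq X\otimes_F F^{\sep}$ is rational; hence $\br(X^{\sep})=0$, so $\br(X)=\br_1(X)$, and $\pic(X^{\sep})=\ns(X^{\sep})$ is finitely generated and torsion-free, exactly as in the proof of Proposition \ref{brauerfini}. Feeding $\br(X^{\sep})=0$ into the exact sequence (2.23) of \cite[Corollary 2.3.9]{MR1845760} used there, and using that a point of $X(F)$ splits $\pi^*\colon\br(F)\to\br(X)$, I obtain an injection $\br(X)/\br(F)\hookrightarrow\coh^1(F,\pic(X^{\sep}))$. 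Thus it suffices to prove that $\coh^1(F,\pic(X^{\sep}))$ is killed by $2$.

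Next I would describe $\pic(X^{\sep})$ as a module over the absolute Galois group $G_F$ of $F$. Base change along the separable extension $F^{\sep}/F$ preserves regularity, so $X^{\sep}\to\mathbf{P}^1_{F^{\sep}}$ is again a regular conic bundle, whose degenerate geometric fibres are reduced, each being a union of two lines $C_Q^+\cup C_Q^-$ with $C_Q^++C_Q^-=F$ in $\pic(X^{\sep})$, where $F$ denotes the class of a fibre. Restricting divisor classes to the geometric generic fibre — a $\mathbf{P}^1$, since $\br(F^{\sep}(t))=0$ by Tsen — gives a $G_F$-equivariant exact sequence $0\to D\to\pic(X^{\sep})\to\mathbf{Z}\to0$, with trivial action on the degree $\mathbf{Z}$ and $D$ the subgroup of classes supported on the fibres. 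As $\coh^1(F,\mathbf{Z})=\mor_{\cont}(G_F,\mathbf{Z})=0$, the long exact sequence makes $\coh^1(F,\pic(X^{\sep}))$ a quotient of $\coh^1(F,D)$, so I am reduced to bounding $\coh^1(F,D)$.

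I would then analyse $D$. It is generated by the Galois-fixed class $F$ together with the fibre components, subject only to the relations $C_Q^++C_Q^-=F$; thus $\mathbf{Z}\cdot F$ is a trivial submodule and $D/\mathbf{Z}F\cong\bigoplus_P\mathrm{Ind}_{\kappa(P)}^F\mathbf{Z}_{\chi_P}$, the sum running over the closed points $P$ of $\mathbf{P}^1_F$ whose fibre is singular, where $\mathbf{Z}_{\chi_P}$ is $\mathbf{Z}$ with $G_{\kappa(P)}$ acting through the at most quadratic character $\chi_P$ cutting out the field of definition $L_P$ of the two components, the swap $C_Q^+\leftrightarrow C_Q^-=F-C_Q^+$ acting by $-1$ modulo $\mathbf{Z}F$. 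By Shapiro's lemma $\coh^1(F,D/\mathbf{Z}F)=\bigoplus_P\coh^1(\kappa(P),\mathbf{Z}_{\chi_P})$, and each summand is $2$-torsion: restriction to $L_P$ trivialises $\mathbf{Z}_{\chi_P}$ and lands in $\coh^1(L_P,\mathbf{Z})=0$, so composing with $\cores_{L_P/\kappa(P)}$ shows that multiplication by $[L_P:\kappa(P)]\le2$ annihilates it. Finally $\coh^1(F,\mathbf{Z})=0$ and the sequence $0\to\mathbf{Z}F\to D\to D/\mathbf{Z}F\to0$ yield an injection $\coh^1(F,D)\hookrightarrow\coh^1(F,D/\mathbf{Z}F)$; hence $\coh^1(F,D)$, its quotient $\coh^1(F,\pic(X^{\sep}))$, and the subgroup $\br(X)/\br(F)$ are all $2$-torsion.

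The step I expect to be the main obstacle is pinning down the $G_F$-module $D$: concretely, justifying that regularity of $X$ forces every degenerate geometric fibre to be reduced — two distinct lines, with no double-line fibres — so that $\{C_Q^+,C_Q^-\}$ is a genuine two-element Galois set and $D$ is exactly the span of $F$ and these components with the single family of relations above. If one prefers to avoid this point, one may first replace $f$ by an equivalent regular conic bundle with integral fibres by Propositions \ref{equweakconicbundle} and \ref{NSbirequ}, which alters neither the function field nor the birational invariant $\br(X)/\br(F)$; everything downstream is then a formal diagram chase, the $2$-torsion coming solely from the order-$2$ swap of the two components of each singular fibre.
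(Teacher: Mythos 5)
Your argument is correct, but it follows a genuinely different route from the paper. You reduce at the outset to the algebraic Brauer group via $\br(X^{\sep})=0$ and then run the classical computation: Hochschild--Serre gives $\br(X)/\br(F)\hookrightarrow\coh^1(F,\pic(X^{\sep}))$, and the Galois-module structure of $\pic(X^{\sep})$ (fibre class plus one component per degenerate geometric fibre, with the swap acting by $-1$ modulo the fibre class) shows this group is killed by $2$ via Shapiro and restriction--corestriction along the at most quadratic splitting fields. The paper instead never touches $\pic(X^{\sep})$: it kills the $\ell$-primary part for each odd $\ell$ (including $\ell=p$) by passing to the fixed field $E$ of an $\ell$-Sylow, where every closed fibre splits, a section exists by Lemma \ref{sectionsplit}, and $\br(X_E)=\br(E)$; it then shows the $2$-primary part is $2$-torsion by a residue computation on the generic fibre. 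Your approach buys more -- an explicit embedding of $\br(X)/\br(F)$ into a direct sum of groups of order at most $2$ indexed by the non-split fibres, hence finiteness as well -- but it leans on the vanishing of $\br(X^{\sep})$, i.e.\ on purity/birational invariance in positive characteristic, to dispose of the transcendental and $p$-primary parts in one stroke, whereas the paper's Sylow argument for odd $\ell$ is more elementary on that point. Two small remarks: the reducedness of the degenerate geometric fibres, which you flag as the main obstacle, is already settled by the local normal form $ax^2+by^2-z^2$ with $v(b)=0$ established in the proof of Proposition \ref{splitresiduemap} (a double line would force the total space to be singular), so no extra work is needed; by contrast your proposed fallback via Proposition \ref{equweakconicbundle} does not by itself help, since integrality of the closed fibres over $F$ says nothing about geometric reducedness. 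Finally, avoid writing $F$ both for the base field and for the fibre class, and note that for imperfect $F$ the induction in your decomposition of $D$ should be from the maximal separable subextension of $\kappa(P)/F$; neither point affects the conclusion.
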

\begin{proof}[Proof of Lemma \ref{2primaryCB}]
Let us first prove that $\left(\br(X)/\br(F)\right)\{\ell\}=0$ for all prime numbers $\ell\neq2$. For this purpose, we fix a prime number $\ell\neq2$, we denote by $F^{\sep}$ a separable closure of $F$ and we set~$E$ the fixed field of an $\ell$-Sylow of $\gal(F^{\sep}/F)$, so that $\gal(F^{\sep}/E)$ is a pro-$\ell$ group. Using the commutativity of $\br$ with limits \cite[\S2.2.2]{MR4304038} and \cite[Proposition~3.8.4]{MR4304038}, it is enough to prove that $\left(\br(X_E)/\br(E)\right)\{\ell\}=0$. Since $\gal(K^{\sep}/E)$ is a pro-$\ell$ group, $E$ has no quadratic extension, so that all the closed fibres of the conic bundle $f_E\coloneqq f\otimes_FE:X_E\rightarrow\mathbf{P}^1_E$ are split. By Lemma \ref{sectionsplit}, and (b) of Remark \ref{rem2torbr}, there exists a section $s$ of $f_E$. If we denote by $\eta$ the generic fibre of $\mathbf{P}^1_E$, we thus have a commutative diagram
\begin{equation}\label{diag}
\begin{tikzcd}
0\arrow[r] & \br(X_E) \arrow[r]\arrow[d,"s^*", bend left] &\br(X_{E,\eta})\arrow[d,"s_{\eta}^*", bend left] \\
0\arrow[r] & \br(\mathbf{P}^1_E)\arrow[r]\arrow[u,"f^*"] & \br(\eta)\arrow[u,"f_{\eta}^*"]
\end{tikzcd}
\end{equation}
whose rows are exact since $X_E$ is regular, and where $f_{\eta}$ (resp. $s_{\eta}$) is the base change of~$f$ by $\eta$ (resp. is the rational point of $X_{E,\eta}$ corresponding to $s$). Since the conic $X_{E,\eta}$ has a rational point $s_{\eta}$, it is isomorphic to $\mathbf{P}^1_{E(\mathbf{P}^1)}$, so that the map $f_{\eta}^*$ is an isomorphism. As~$s_{\eta}^*\circ f_{\eta}^*=id$, the map $s_{\eta}^*$ is also an isomorphism. In particular, the commutativity of (\ref{diag}) ensures that $s^*$ is injective. But since $s^*\circ f^*=id$, the morphism $s^*$ is also surjective, so that $s^*$ is an isomorphism. This proves that $\br(X_E)\simeq\br(\mathbf{P}^1_E)$, that is, $\br(X_E)=\br(E)$, so that $\left(\br(X_E)/\br(E)\right)\{\ell\}=0$.

It remains to show that $\left(\br(X)/\br(F)\right)\{2\}=\left(\br(X)/\br(F)\right)\left[2\right]$. The choice of an element in $X(F)$ supplies an isomorphism $\br(X)\simeq\br(F)\oplus\br(X)/\br(F)$, so that $\left(\br(X)/\br(F)\right)\{2\}\simeq\br(X)\{2\}/\br(F)\{2\}$. Furthermore, if we denote by $\eta$ the generic point of $\mathbf{P}^1_F$ and $f_{\eta}$ the pullback of $f$ by $\eta$, there is a commutative diagram with exact rows
\begin{center}
\begin{tikzcd}
0\arrow[r] & \br(X) \{2\}\arrow[r] & \br(X_{\eta})\{2\}\arrow[r] & \displaystyle\bigoplus_{P\in (\mathbf{P}^1_F)^{(1)}}\displaystyle\bigoplus_{V\subset X_P}\coh^1(F(V),\mathbf{Q}_2/\mathbf{Z}_2) \\
0\arrow[r] & \br(F) \{2\}\arrow[r]\arrow[u, "f^*"] & \br(F(\mathbf{P}^1))\{2\}\arrow[r]\arrow[u, "f_{\eta}^*"] & \displaystyle\bigoplus_{P\in (\mathbf{P}^1_F)^{(1)}}\coh^1(\kappa(P),\mathbf{Q}_2/\mathbf{Z}_2)\arrow[u, "\oplus_P\oplus_V\res_{F(V)/\kappa(P)}", swap]
\end{tikzcd}.
\end{center}
where the bottom row is the limit, as $U$ ranges over nonempty subsets of $\mathbf{P}^1_F$, of the exact sequences \cite[Theorem~3.7.2.(ii)]{MR4304038} with $Z\coloneqq \mathbf{P}^1_F\setminus U$ and $\ell=2$, combined with the isomorphism $\br(\mathbf{P}^1_F)\simeq\br(F)$. As for the top row,~$V\subset X_P$ ranges over the irreducible components of $X_P$ which, by flatness of $f$, are precisely the codimension one subschemes of $X$. The top row complex is then obtained as the limit, as $U$ ranges over nonempty subsets of $\mathbf{P}^1_F$ of the short exact sequences \cite[Theorem~3.7.2.(3.19)]{MR4304038} applied to $Z\coloneqq X\setminus f^{-1}(U)$ and $\ell=2$. The whole diagram is commutative by the functoriality of residues (see e.g.\ \cite[Theorem 3.7.5]{MR4304038}). Let us notice that since~$X_{\eta}$ is a smooth conic, the middle vertical row is surjective by \cite[Proposition 7.2.1]{MR4304038}. Also, if~$\kappa_V$ denotes the algebraic closure of $\kappa(P)$ in $F(V)$, the kernel of the right vertical map is $\bigoplus_{P\in(\mathbf{P}^1_F)^{(1)}}\bigcap_{V\subset X_P}\coh^1(\gal(\kappa_V/\kappa(P)),\mathbf{Q}_2/\mathbf{Z}_2)$. Since~$X_P$ is a conic, this group is $2$-torsion as $\kappa_V/\kappa(P)$ is an extension of degree at most $2$. By diagram chasing, we deduce that~$2\left(\br(X)\{2\}\right)\subset\br(F)\{2\}$, that is, $\br(X)\{2\}/\br(F)\{2\}$ is a $2$-torsion group, hence~$\left(\br(X)/\br(F)\right)\{2\}$ is $2$-torsion.
\end{proof}

%\begin{rem}
%When $F$ is a field of characteristic different from $2$, the description of $\br(X)/\br(F)$ could be given as in \cite[Corollary 11.3.5]{MR4304038} by a further study of the last diagram in the preceding proof.
%\end{rem}

\end{appendices}

\bibliographystyle{myamsalpha}
\bibliography{biblio}

\providecommand{\bysame}{\leavevmode\hbox to3em{\hrulefill}\thinspace}
\providecommand{\MR}{\relax\ifhmode\unskip\space\fi MR }
% \MRhref is called by the amsart/book/proc definition of \MR.
\providecommand{\MRhref}[2]{%
  \href{http://www.ams.org/mathscinet-getitem?mr=#1}{#2}
}
\providecommand{\href}[2]{#2}
\begin{thebibliography}{STVA14}

\bibitem[Ax68]{MR229613}
J.~Ax, \emph{The elementary theory of finite fields}, Ann. of Math. (2)
  \textbf{88} (1968), 239--271. \MR{229613}

\bibitem[BDS13]{MR3158990}
I.~Biswas and J.~P. Dos~Santos, \emph{Triviality criteria for bundles over
  rationally connected varieties}, J. Ramanujan Math. Soc. \textbf{28} (2013),
  no.~4, 423--442. \MR{3158990}

\bibitem[Bis09]{MR2538008}
I.~Biswas, \emph{On the fundamental group-scheme}, Bull. Sci. Math.
  \textbf{133} (2009), no.~5, 477--483. \MR{2538008}

\bibitem[Coo88]{MR948785}
K.~R. Coombes, \emph{Every rational surface is separably split}, Comment. Math.
  Helv. \textbf{63} (1988), no.~2, 305--311. \MR{948785}

\bibitem[CT15]{MR3434268}
J.-L. Colliot-Th\'{e}l\`{e}ne, \emph{Surfaces de del {P}ezzo de degr\'e{} 4 sur
  un corps {$C_1$}}, Taiwanese J. Math. \textbf{19} (2015), no.~6, 1613--1618.
  \MR{3434268}

\bibitem[CTS80]{MR605344}
J.-L. Colliot-Th\'el\`ene and J.-J. Sansuc, \emph{La descente sur les
  vari\'et\'es rationnelles}, Journ\'ees de {G}\'eometrie {A}lg\'ebrique
  d'{A}ngers, {J}uillet 1979/{A}lgebraic {G}eometry, {A}ngers, 1979, Sijthoff
  \& Noordhoff, Alphen aan den Rijn---Germantown, Md., 1980, pp.~223--237.
  \MR{605344}

\bibitem[CTS87]{MR899411}
J.-L. Colliot-Th\'{e}l\`{e}ne and A.~N. Skorobogatov, \emph{{$R$}-equivalence
  on conic bundles of degree {$4$}}, Duke Math. J. \textbf{54} (1987), no.~2,
  671--677. \MR{899411}

\bibitem[CTS21]{MR4304038}
\bysame, \emph{The {B}rauer-{G}rothendieck group}, Ergebnisse der Mathematik
  und ihrer Grenzgebiete. 3. Folge. A Series of Modern Surveys in Mathematics
  [Results in Mathematics and Related Areas. 3rd Series. A Series of Modern
  Surveys in Mathematics], vol.~71, Springer, Cham, [2021] \copyright 2021.
  \MR{4304038}

\bibitem[Fu11]{MR2791606}
L.~Fu, \emph{Etale cohomology theory}, Nankai Tracts in Mathematics, vol.~13,
  World Scientific Publishing Co. Pte. Ltd., Hackensack, NJ, 2011. \MR{2791606}

\bibitem[FvL16]{MR3455757}
D.~Festi and R.~van Luijk, \emph{Unirationality of del {P}ezzo surfaces of
  degree 2 over finite fields}, Bull. Lond. Math. Soc. \textbf{48} (2016),
  no.~1, 135--140. \MR{3455757}

\bibitem[GJ18]{MR3831010}
F.~Gounelas and A.~Javanpeykar, \emph{Invariants of {F}ano varieties in
  families}, Mosc. Math. J. \textbf{18} (2018), no.~2, 305--319. \MR{3831010}

\bibitem[Gou14]{MR3268754}
F.~Gounelas, \emph{The first cohomology of separably rationally connected
  varieties}, C. R. Math. Acad. Sci. Paris \textbf{352} (2014), no.~11,
  871--873. \MR{3268754}

\bibitem[GS06]{MR2266528}
P.~Gille and T.~Szamuely, \emph{Central simple algebras and {G}alois
  cohomology}, Cambridge Studies in Advanced Mathematics, vol. 101, Cambridge
  University Press, Cambridge, 2006. \MR{2266528}

\bibitem[Har07]{MR2439198}
D.~Harari, \emph{Quelques propri\'et\'es d'approximation reli\'ees \`a{} la
  cohomologie galoisienne d'un groupe alg\'ebrique fini}, Bull. Soc. Math.
  France \textbf{135} (2007), no.~4, 549--564. \MR{2439198}

\bibitem[Has10]{MR2931861}
B.~Hassett, \emph{Weak approximation and rationally connected varieties over
  function fields of curves}, Vari\'et\'es rationnellement connexes: aspects
  g\'eom\'etriques et arithm\'etiques, Panor. Synth\`eses, vol.~31, Soc. Math.
  France, Paris, 2010, pp.~115--153. \MR{2931861}

\bibitem[Isk67]{MR220734}
V.~A. Iskovskih, \emph{Rational surfaces with a pencil of rational curves},
  Mat. Sb. (N.S.) \textbf{74(116)} (1967), 608--638. \MR{220734}

\bibitem[Isk79]{MR525940}
\bysame, \emph{Minimal models of rational surfaces over arbitrary fields}, Izv.
  Akad. Nauk SSSR Ser. Mat. \textbf{43} (1979), no.~1, 19--43, 237. \MR{525940}

\bibitem[KM17]{MR3689320}
J.~Koll\'{a}r and M.~Mella, \emph{Quadratic families of elliptic curves and
  unirationality of degree 1 conic bundles}, Amer. J. Math. \textbf{139}
  (2017), no.~4, 915--936. \MR{3689320}

\bibitem[Kol96]{MR1440180}
J.~Koll\'{a}r, \emph{Rational curves on algebraic varieties}, Ergebnisse der
  Mathematik und ihrer Grenzgebiete. 3. Folge. A Series of Modern Surveys in
  Mathematics [Results in Mathematics and Related Areas. 3rd Series. A Series
  of Modern Surveys in Mathematics], vol.~32, Springer-Verlag, Berlin, 1996.
  \MR{1440180}

\bibitem[Kol99]{MR1715330}
\bysame, \emph{Rationally connected varieties over local fields}, Ann. of Math.
  (2) \textbf{150} (1999), no.~1, 357--367. \MR{1715330}

\bibitem[Kol02]{MR1956057}
\bysame, \emph{Unirationality of cubic hypersurfaces}, J. Inst. Math. Jussieu
  \textbf{1} (2002), no.~3, 467--476. \MR{1956057}

\bibitem[Man66]{MR225780}
Yu.~I. Manin, \emph{Rational surfaces over perfect fields}, Inst. Hautes
  \'Etudes Sci. Publ. Math. (1966), no.~30, 55--113. \MR{225780}

\bibitem[Man67]{MR225781}
\bysame, \emph{Rational surfaces over perfect fields. {II}}, Mat. Sb. (N.S.)
  \textbf{72(114)} (1967), 161--192. \MR{225781}

\bibitem[Man71]{MR0427322}
\bysame, \emph{Le groupe de {B}rauer-{G}rothendieck en g\'eom\'etrie
  diophantienne}, Actes du {C}ongr\`es {I}nternational des {M}ath\'ematiciens
  ({N}ice, 1970), {T}ome 1, Gauthier-Villars \'Editeur, Paris, 1971,
  pp.~401--411. \MR{427322}

\bibitem[Man86]{MR833513}
\bysame, \emph{Cubic forms}, second \'ed., North-Holland Mathematical Library,
  vol.~4, North-Holland Publishing Co., Amsterdam, 1986, Algebra, geometry,
  arithmetic, Translated from the Russian by M. Hazewinkel. \MR{833513}

\bibitem[Mes96]{MR1381777}
J.-F. Mestre, \emph{Annulation, par changement de variable, d'\'el\'ements de
  {${\rm Br}_2(k(x))$}, o\`u{} {$k$} est un corps fini}, C. R. Acad. Sci. Paris
  S\'er. I Math. \textbf{322} (1996), no.~5, 423--426. \MR{1381777}

\bibitem[MR203]{MR2017446}
\emph{Rev\^etements \'etales et groupe fondamental ({SGA} 1)}, Documents
  Math\'ematiques (Paris) [Mathematical Documents (Paris)], vol.~3,
  Soci\'et\'e{} Math\'ematique de France, Paris, 2003, S\'eminaire de
  g\'eom\'etrie alg\'ebrique du Bois Marie 1960--61. [Algebraic Geometry
  Seminar of Bois Marie 1960-61], Directed by A. Grothendieck, With two papers
  by M. Raynaud, Updated and annotated reprint of the 1971 original [Lecture
  Notes in Math., 224, Springer, Berlin; MR0354651 (50 \#7129)]. \MR{2017446}

\bibitem[NSW08]{MR2392026}
J.~Neukirch, A.~Schmidt, and K.~Wingberg, \emph{Cohomology of number fields},
  second \'ed., Grundlehren der mathematischen Wissenschaften [Fundamental
  Principles of Mathematical Sciences], vol. 323, Springer-Verlag, Berlin,
  2008. \MR{2392026}

\bibitem[Pir12]{MR2957693}
A.~Pirutka, \emph{{$R$}-equivalence on low degree complete intersections}, J.
  Algebraic Geom. \textbf{21} (2012), no.~4, 707--719. \MR{2957693}

\bibitem[Poo17]{MR3729254}
B.~Poonen, \emph{Rational points on varieties}, Graduate Studies in
  Mathematics, vol. 186, American Mathematical Society, Providence, RI, 2017.
  \MR{3729254}

\bibitem[PS99]{MR1668575}
A.~N. Parshin and I.~R. Shafarevich (eds.), \emph{Algebraic geometry. {V}},
  Encyclopaedia of Mathematical Sciences, vol.~47, Springer-Verlag, Berlin,
  1999, Fano varieties, A translation of {\it Algebraic geometry. 5} (Russian),
  Ross. Akad. Nauk, Vseross. Inst. Nauchn. i Tekhn. Inform., Moscow.
  \MR{1668575}

\bibitem[Ser94]{MR1324577}
J.-P. Serre, \emph{Cohomologie galoisienne}, fifth \'ed., Lecture Notes in
  Mathematics, vol.~5, Springer-Verlag, Berlin, 1994. \MR{1324577}

\bibitem[Sko96]{MR1408492}
A.~N. Skorobogatov, \emph{Descent on fibrations over the projective line},
  Amer. J. Math. \textbf{118} (1996), no.~5, 905--923. \MR{1408492}

\bibitem[Sko01]{MR1845760}
\bysame, \emph{Torsors and rational points}, Cambridge Tracts in Mathematics,
  vol. 144, Cambridge University Press, Cambridge, 2001. \MR{1845760}

\bibitem[Sko15]{MR3467133}
\bysame, \emph{Descent on toric fibrations}, Arithmetic and geometry, London
  Math. Soc. Lecture Note Ser., vol. 420, Cambridge Univ. Press, Cambridge,
  2015, pp.~422--435. \MR{3467133}

\bibitem[STVA14]{MR3245139}
C.~Salgado, D.~Testa, and A.~V\'arilly-Alvarado, \emph{On the unirationality of
  del {P}ezzo surfaces of degree 2}, J. Lond. Math. Soc. (2) \textbf{90}
  (2014), no.~1, 121--139. \MR{3245139}

\bibitem[\v{C}19]{MR3959863}
K.~\v{C}esnavi\v{c}ius, \emph{Purity for the {B}rauer group}, Duke Math. J.
  \textbf{168} (2019), no.~8, 1461--1486. \MR{3959863}

\bibitem[Vor86]{MR867120}
I.~I. Voronovich, \emph{Splitting fields of central simple algebras over a
  field of rational functions}, Dokl. Akad. Nauk BSSR \textbf{30} (1986),
  no.~9, 773--775, 860. \MR{867120}

\bibitem[Yan85]{MR826395}
V.~I. Yanchevskiĭ, \emph{{$K$}-unirationality of conic bundles and splitting
  fields of simple central algebras}, Dokl. Akad. Nauk BSSR \textbf{29} (1985),
  no.~12, 1061--1064, 1148. \MR{826395}

\bibitem[Yan90]{MR1171295}
\bysame, \emph{{$K$}-unirationality of conic bundles, the {K}neser-{T}its
  conjecture for spinor groups and central simple algebras}, Topics in algebra,
  {P}art 2 ({W}arsaw, 1988), Banach Center Publ., vol. 26, Part 2, PWN, Warsaw,
  1990, pp.~483--490. \MR{1171295}

\bibitem[Yan92]{MR1211025}
\bysame, \emph{{$K$}-unirationality of conic bundles over large arithmetic
  fields}, no. 209, 1992, Journ\'ees Arithm\'etiques, 1991 (Geneva), pp.~16,
  311--320. \MR{1211025}

\end{thebibliography}
\nocite{*}

\end{document}